\title{Isotriviality of families of curves parametrized by $\Ag(n)$}
\author{\'Eloan Rapion}
\newcommand{\dif}{\mathop{}\mathopen{}\mathrm d}
\newcommand*{\application}[5]{\begin{array}{lrcl}
		#1: & #2 & \to & #3 \\
		& #4 & \mapsto & #5
	\end{array}
}
\newcommand{\ext}[1]%
{{\vphantom{#1}}^{\mathit \diamond}{#1}}
\newtheorem{thm}{Theorem}[section]
\newtheorem{cor}[thm]{Corollary}
\newtheorem{defi}[thm]{Definition}
\newtheorem{lem}[thm]{Lemma}
\newtheorem{prop}[thm]{Proposition}
\DeclareMathOperator{\AugBL}{\mathbb{B}_+}
\DeclareMathOperator{\Aut}{Aut}
\DeclareMathOperator{\BL}{Bs}
\DeclareMathOperator{\Charac}{\mathcal{S}}
\DeclareMathOperator{\Chern}{C_1}
\DeclareMathOperator{\chern}{c_1}
\DeclareMathOperator{\End}{End}
\DeclareMathOperator{\Exc}{Exc}
\DeclareMathOperator{\Frac}{Frac}
\DeclareMathOperator{\GL}{GL}
\DeclareMathOperator{\home}{h}
\DeclareMathOperator{\Img}{Im}
\DeclareMathOperator{\PABL}{\mathbb{B}_+^{\Proj}}
\DeclareMathOperator{\PBL}{\StBL^{\Proj}}
\DeclareMathOperator{\PmBL}{\BL_m^{\Proj}}
\DeclareMathOperator{\Proj}{\mathbb{P}}
\DeclareMathOperator{\Spec}{Spec}
\DeclareMathOperator{\StBL}{\mathbb{B}}
\DeclareMathOperator{\Sym}{Sym}
\DeclareMathOperator{\TBL}{\StBL^T}
\DeclareMathOperator{\TmBL}{\BL_m^T}
\renewcommand{\epsilon}{\varepsilon}
\def\Ag{\mathcal{A}_g}
\def\Agn{\Ag(n)}
\def\Amp{A}
\def\Anman{M}
\def\Base{S}
\def\C{\mathbb{C}}
\def\Came{g}
\def\came{\hat{\Came}}
\def\Charac{\mathcal{S}}
\def\chri{\varphi}
\def\Cinf{\mathcal{C}^\infty}
\def\Comp{\bar{\Lsav}}
\def\Const{C}
\def\di{d}
\def\Disk{\Delta}
\def\Disr{\Disk_r}
\def\Div{D}
\def\Dipo{\chi}
\def\Dom{\Sigma}
\def\Dual{\check{\Dom}}
\def\Fib{\Proj \Omega_{\Dom}^1}
\def\Fico{\widetilde{\Lsav}}
\def\Filho{\textbf{F}}
\def\gsec{\sigma}
\def\glie{\mathfrak{g}}
\def\Group{G}
\def\im{\textbf{i}}
\def\incl{\iota}
\def\Isot{K}
\def\Latt{\Gamma}
\def\LB{L}
\def\llie{\mathfrak{l}}
\def\Locsys{\textbf{H}}
\def\Lsav{X}
\def\ma{\mathfrak{m}}
\def\Mg{\mathcal{M}_g}
\def\Mh{\mathcal{M}_h}
\def\mlie{\mathfrak{m}}
\def\N{\mathbb{N}}
\def\Ns{\N_{>0}}
\def\nul{n}
\def\Null{\mathcal{N}}
\def\Omun{\Stru(-1)}
\def\Pdis{\Disr^*}
\def\Pgroup{P}
\def\plie{\mathfrak{p}}
\def\Pol{\textbf{S}}
\def\Ppdi{(\Pdis)^\di}
\def\PrB{\Proj \Omega_{\Comp}^1(\log \Div)}
\def\Prv{W}
\def\QprB{\Proj \Omega_{\Lsav}^1}
\def\Qpv{V}
\def\R{\mathbb{R}}
\def\Ralg{\R_{\text{alg}}}
\def\Ranexp{\R_{\text{an, exp}}}
\def\reg{0}
\def\Ricu{R}
\def\sing{\text{sing}}
\def\Smofo{A}
\def\Stru{\mathcal{O}}
\def\Taut{\Stru(1)}
\def\Thin{T}
\def\U{\mathbb{U}}
\def\vame{\mu}
\def\VB{E}
\def\Z{\mathbb{Z}}
\begin{document}

\maketitle

\begin{abstract}
	We prove that for every integers $g, h\geq 2, n \geq 3$, for all but finitely many prime numbers $p$, for every field $k$ of characteristic $0$ or $p$, every separable family of smooth projective curves of genus $h$ over $\Agn \otimes k$ is isotrivial. To prove this, we compute the common vanishing locus of the absolutely logarithmic symmetric forms on a smooth complex algebraic variety whose universal covering is biholomorphic to an irreducible bounded symmetric domain of rank at least $2$.
\end{abstract}

\section{Introduction}

Consider a field $k$ and a $k$-variety $\Qpv$ (an integral separated scheme $\Qpv$ of finite type over $\Spec(k)$). For $g \geq 2$ an integer, we call \emph{family of smooth projective curves} of genus $g$ over $\Qpv$ the data of a $k$-variety $\Prv$ and a smooth projective morphism $\Prv \to \Qpv$ whose geometric fibers are connected smooth projective curves of genus $g$. Let $\Mg$ be the moduli stack of connected smooth projective curves of genus $g$. The data of an isomorphism class of families of smooth projective curves over $\Qpv$ is equivalent to the data of a $k$-morphism $\Qpv \to \Mg \otimes k$.

\bigskip

A family of smooth projective curves of genus $g$ over $\Qpv$ associated with a morphism $f: \Qpv \to \Mg \otimes k$ is \emph{separable} if for every $k$-variety $\Qpv'$ (resp. $M$), for every dominant étale morphism $p: \Qpv' \to \Qpv$ (resp. $q: M \to \Mg \otimes k$) and every lift $f': \Qpv' \to M$ of $f$, the corestriction $f': \Qpv' \to \overline{\Img f'}$ is separable. This is equivalent to the existence of such $\Qpv, M, p, q$ and $f'$ that realize this condition.

\bigskip

Let $M_g$ be the coarse moduli space associated with $\Mg$. A family of smooth projective curves of genus $g$ over $\Qpv$ is \emph{isotrivial} if the composition of the morphism $\Qpv \to \Mg \otimes k$ associated with the family of smooth projective curves and of the natural morphism $\Mg \otimes k \to M_g \otimes k$ is constant. We denote by $\Ag$ the Deligne-Mumford stack of principally polarized Abelian varieties of dimension $g$. For $n \in \Ns$, we denote by $\Agn$ the moduli stack of principally polarized Abelian varieties with $n$-level structure, which is a scheme for $n \geq 3$. Our goal is to prove the following theorem.

\begin{thm}\label{thm:mainapp}
	For every integers $g, h\geq 2, n \geq 3$, for all but finitely many prime numbers $p$, for every field $k$ of characteristic $0$ or $p$, every separable family of smooth projective curves of genus $h$ over $\Agn \otimes k$ is isotrivial.
\end{thm}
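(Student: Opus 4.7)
The plan is to establish the theorem in the complex analytic setting and deduce the positive-characteristic cases via spreading out and specialization. Standard reduction arguments --- using separability, the constructibility of the moduli of families, and rigidity of the classifying morphism to the coarse moduli space $M_h$ --- allow one to spread out any counterexample in characteristic $p$ to a counterexample over a finite-type $\Z$-scheme, whose generic fiber then yields a counterexample in characteristic zero. By the Lefschetz principle, it thus suffices to prove the theorem for $k = \C$.

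In the complex case, I would argue by contradiction and assume a non-isotrivial family of genus-$h$ curves over $\Agn$, corresponding to a non-constant morphism $f : \Agn \to M_h$. Fix a smooth toroidal compactification $\overline{\Agn}$ with simple normal crossing boundary divisor $\partial$. Using the Deligne-Mumford compactification $\overline{\mathcal M}_h$ of $\mathcal M_h$ (after a finite étale base change chosen to rigidify the moduli), the Viehweg-Zuo positivity theorem for moduli of canonically polarized varieties produces a non-zero section of $\Sym^m \Omega^1_{\overline{\Agn}}(\log \partial)$ for some $m \geq 1$. A Hodge-theoretic analysis --- using that this section arises from the Kodaira-Spencer map and extends via the canonical extension of the Hodge bundle --- then shows that it is \emph{absolutely} logarithmic in the sense of the paper, i.e., that it is compatible across every sufficiently nice compactification of $\Agn$.

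The universal analytic cover of $\Agn$ is the Siegel upper half-space $\mathfrak{H}_g$, an irreducible bounded symmetric domain of rank $g \geq 2$. The vanishing theorem announced in the abstract then forces every absolutely logarithmic symmetric form on $\Agn$ to vanish identically, contradicting the section constructed above and completing the characteristic-zero case.

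The main obstacle, which I expect to dominate the work, is verifying that the symmetric differential produced by a non-isotrivial family is genuinely \emph{absolutely} logarithmic, rather than merely logarithmic with respect to a single chosen compactification. This demands a careful matching of the Hodge-theoretic construction with the boundary behavior across the toroidal and Baily-Borel compactifications of $\Agn$ and the Deligne-Mumford compactification of $\mathcal M_h$. A secondary, more technical point is the positive-characteristic reduction: separability is invoked precisely to avoid inseparability pathologies and to ensure that non-isotriviality in characteristic $p$ genuinely lifts to a non-isotrivial family in characteristic zero, where the analytic argument applies.
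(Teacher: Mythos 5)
There are two genuine gaps. The first is your reduction to characteristic zero. You propose to spread out a characteristic-$p$ counterexample over a finite-type $\Z$-scheme and pass to the generic fiber; but spreading out a family defined over a field of characteristic $p$ only produces data over a finite-type $\F_p$-algebra, and there is no reason the classifying morphism $\Agn \otimes k \to \Mh \otimes k$ lifts to characteristic zero. The paper explicitly warns that Theorem \ref{thm:mainapp} is \emph{not} a consequence of the complex case. The actual argument goes in the opposite direction: one proves over $\C$ that (i) the base locus $\PmBL$ of degree-$m$ logarithmic symmetric forms on a level cover $M$ of $\Mh$ is empty for some $m$ (Lemma \ref{lem:vie}, via Viehweg's Kodaira--Spencer criterion), and (ii) the base loci on an integral model of $\Agn$ are linearly non-degenerate (Theorem \ref{thm:main}); both statements are then transported to all but finitely many closed fibers of integral models over $\Z$ by Grauert semicontinuity and Chevalley's theorem (Propositions \ref{prop:ext} and \ref{prop:spe}), and the vanishing of $\dif f$ is deduced directly in characteristic $p$, where the separability hypothesis is finally used to conclude that $f$ is constant.

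The second gap is in your characteristic-zero endgame. Theorem \ref{thm:main} does not say that every absolutely logarithmic symmetric form on $\Agn$ vanishes identically --- on the contrary, the logarithmic cotangent bundle of $\Agn$ is big, and the theorem computes the common vanishing locus to be the characteristic subvariety $\Charac \subset \Proj\Omega^1_{\Agn}$, a proper closed subset that is merely linearly non-degenerate over each point. So producing one non-zero symmetric differential via Viehweg--Zuo yields no contradiction. The mechanism that actually works (Proposition \ref{prop:cste}) is: pull back forms from $M$ along the lift of $f$ to a finite étale cover of $\Agn$ (Propositions \ref{prop:covform} and \ref{prop:covsbl} handle this descent); by functoriality these pullbacks must vanish on the linearly non-degenerate set $\Charac$; since the base locus on $M$ is empty, for every point and tangent vector of $M$ some form does not vanish there; hence $\dif f$ annihilates every tangent direction lying under $\Charac$, and linear non-degeneracy forces $\dif f = 0$.
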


To prove the previous theorem, we consider the logarithmic symmetric forms on logarithmic compactifications of $\Ag(n) \otimes k$ and of a finite étale covering $M$ of $\Mg \otimes k$. Indeed, for every non zero tangent vector $v$ of $M$, there is a logarithmic symmetric form which does not vanish on $v$. The data of a family of smooth projective curves on $\Ag(n) \otimes k$ allows to pull back logarithmic symmetric forms of $M$ on a finite étale covering of $\Ag(n) \otimes k$ (Proposition \ref{prop:functform}). Some power of this pull back descends on $\Ag(n) \otimes k$ (Proposition \ref{prop:covform}). But we will prove that logarithmic symmetric forms must vanish on a subset of $\Proj \Omega_{\Ag(n) \otimes k}^1$ which linearly generates it above each point of $\Ag(n) \otimes k$ (although in characteristic $0$, there are ``a lot'' of logarithmic symmetric forms in the sense that $\Omega_{\Ag(n) \otimes k}^1$ is L-big). This vanishing constraint implies that the families of smooth projective curves are isotrivial (Proposition \ref{prop:cste}).

\bigskip

We can deduce the vanishing constraint of logarithmic symmetric forms of $\Ag(n) \otimes k$ from the case $k = \C$ with Propositions \ref{prop:ext} and \ref{prop:spe} (but Theorem \ref{thm:mainapp} itself is not a consequence of the same statement in the case $k = \C$). The universal covering of $\Ag(n) \otimes \C$ is biholomorphic to $g$-th Siegel upper half-space. Mok determined the stable and augmented base loci of the tautological line bundle over the projectivization of the cotangent bundle of every proper smooth complex algebraic variety $\Lsav$ whose universal covering is biholomorphic to an irreducible bounded symmetric domain of rank at least $2$ (\cite[Proposition 4, p.262]{mok1989metric}): it is the characteristic subvariety $\Charac$, which can be defined in the following way. Consider $\Ricu$ the Riemann curvature tensor on $\Lsav$ for the metric induced by Bergman metric on its universal covering. Hence $\Charac \subset \QprB$ is the set of vector lines $[v]$ generated by holomorphic tangent vectors $v \in T_{\Lsav}$ such that the rank of the Hermitian form $(u, w) \mapsto \Ricu(v, \bar{v}, u, \bar{w})$ is not maximal. We cannot directly use the theorem of Mok, as $\Agn \otimes \C$ is not proper. We prove the following generalization of Mok's result without the assumption of properness. 

\begin{thm}\label{thm:main}
	Let $\Lsav$ be a smooth complex algebraic variety whose universal covering is biholomorphic to an irreducible bounded symmetric domain of rank at least $2$. Consider a projective logarithmic compactification $(\Comp, \Div)$ of $\Lsav$. Consider $\Taut$ the tautological line bundle over $\Proj\Omega_{\Comp}^1(\log \Div)$. Consider the projection $\pi: \Proj\Omega_{\Comp}^1(\log \Div) \to \Comp$. Hence the intersection of both the stable base locus and the augmented base locus of $\Taut$ with $\pi^{-1}(\Lsav)$ is the characteristic subvariety $\Charac$.
\end{thm}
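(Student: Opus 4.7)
My plan is to adapt Mok's argument from the compact to the logarithmic setting by establishing separately the two inclusions
\[
\Charac \cap \pi^{-1}(\Lsav) \subseteq \BL(\Taut) \cap \pi^{-1}(\Lsav) \quad\text{and}\quad \AugBL(\Taut) \cap \pi^{-1}(\Lsav) \subseteq \Charac;
\]
since $\BL(\Taut) \subseteq \AugBL(\Taut)$ is automatic, these together imply the theorem. Let $p : \Omega \to \Lsav$ denote the universal covering, where $\Omega$ is the given irreducible bounded symmetric domain of rank at least $2$, equipped with the $\Aut(\Omega)$-invariant Bergman metric; the characteristic subvariety $\Charac$ is defined intrinsically from the Riemann curvature tensor $\Ricu$ of this metric.

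For the first inclusion I would show that, for every $m \geq 1$, every global section $s$ of $\Sym^m \Omega_{\Comp}^1(\log \Div)$ vanishes on $\Charac$. The pullback $\tilde s = p^*(s|_{\Lsav})$ is a $\pi_1(\Lsav)$-invariant holomorphic section of $\Sym^m T^*\Omega$. The key analytic input is that, because $s$ has at worst logarithmic poles along $\Div$, $\tilde s$ is bounded in Bergman norm on $\Omega$: this follows from comparing the local description of the Bergman metric near a boundary component of $\Omega$ with a logarithmic weight. Mok's curvature-theoretic argument then applies verbatim: the Hermitian form defining $\Charac$ controls the complex Hessian of $|\tilde s|^2$, forcing $\tilde s$ to vanish along the preimage of $\Charac$ in $\Proj T^*\Omega$ and hence $s$ to vanish on $\Charac$.

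For the second inclusion I would produce, for each $[v] \in \pi^{-1}(\Lsav) \setminus \Charac$ and each ample line bundle $\Amp$ on $\Comp$, integers $m, N \geq 1$ and a section of $\Sym^{mN} \Omega_{\Comp}^1(\log \Div) \otimes \Amp^{-m}$ not vanishing at $[v]$. In Mok's compact setting such sections arise from highest-weight vectors in representations of the isotropy subgroup $\Pgroup \subset \Aut(\Omega)$, realized as $\pi_1(\Lsav)$-invariant holomorphic symmetric forms on $\Omega$. I would reuse this representation-theoretic construction and show, via a Fourier-Jacobi expansion along the boundary components of a smooth toroidal compactification, that the resulting sections extend to $\Comp$ with at worst logarithmic poles along $\Div$ and, after twisting by a sufficiently negative multiple of $\Amp$, remain nonzero at $[v]$.

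The main obstacle is this last boundary analysis: $(\Comp, \Div)$ is only assumed to be a projective SNC log compactification, not necessarily of the specific Mumford-Tai toroidal type for which Fourier-Jacobi expansions and Bergman-metric asymptotics are best developed. A natural workaround is to pass to a finite étale cover $\Lsav' \to \Lsav$ admitting a smooth toroidal compactification $\Comp'$ that dominates the base change of $\Comp$, carry out both the boundedness argument of the first step and the section construction of the second step on $\Comp'$, and descend using the functoriality of logarithmic differentials together with the invariance of stable and augmented base loci (intersected with $\pi^{-1}(\Lsav)$) under finite étale pullback.
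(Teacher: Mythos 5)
Your decomposition into the two inclusions $\Charac\subset\PBL(\Lsav)$ and $\PABL(\Lsav)\subset\Charac$, and the reduction to a finite \'etale cover admitting a toroidal compactification (justified by arithmeticity via Margulis superrigidity and the descent statements for $\PBL$ and $\PABL$ under finite \'etale covers), match the paper. But your first step hides exactly the difficulty the paper is written to address. Mok's argument for the vanishing of symmetric forms on $\Charac$ is not a pointwise Hessian or maximum-principle estimate: it is an integration by parts over (a quotient of) the characteristic bundle, applied to forms built from the connection difference $\chri=\partial\ln(\vame/\came)$. On a non-compact $\Lsav$ one must justify both the integrability of the relevant top-degree forms and the vanishing of the boundary term in Stokes' formula; asserting that the argument ``applies verbatim'' assumes precisely what has to be proved. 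The paper resolves this by realizing $T_{\Comp}(-\log\Div)$ with the Bergman metric as a subquotient of the canonical extension of a PVHS (Proposition \ref{prop:exvhs}, after Zucker), importing the Cattani--Kaplan--Schmid and Koll\'ar growth estimates to show the relevant forms are almost (quasi)bounded, and proving a Stokes-type vanishing statement (Proposition \ref{prop:intnul}). Moreover, your key analytic claim that $\tilde s$ is \emph{bounded} in Bergman norm is false in general: logarithmic poles correspond to at most logarithmic growth of the norm (cf.\ Theorem \ref{thm:homelog}); already $\frac{\dif z}{z}$ on the punctured disk has Poincar\'e norm $-\ln\lvert z\rvert$.

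For the second inclusion the paper takes a different and much more economical route than your proposed section construction. It invokes Theorem \ref{thm:nef} (a line subbundle of $\ext{\Filho}^{-1}/\ext{\Filho}^{0}$ killed by the Higgs field has nef dual) to conclude that $\Taut$ is nef, then Nakamaye's theorem (Theorem \ref{thm:cas}) to identify $\AugBL(\Taut)$ with the null locus $\Exc(\Taut)$, and finally Koll\'ar's theorem (Theorem \ref{thm:chern}) that Chern forms of Hodge metrics define closed currents computing Chern classes, which reduces the positivity of $\Taut^{\dim Z}\cdot Z$ for $Z\not\subset\Charac\cup\Div$ to the pointwise semipositivity of $\Chern(\Taut,\came)$ and its strict positivity outside $\Charac$. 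Your plan to produce sections of $\Sym^{mN}\Omega_{\Comp}^1(\log\Div)\otimes\Amp^{-m}$ from highest-weight vectors and to control them by Fourier--Jacobi expansions is not carried out: you give no mechanism for making the $\Amp^{-m}$-twisted sections nonvanishing at a prescribed point of $\pi^{-1}(\Lsav)\setminus\Charac$, and no argument that the representation-theoretic sections are $\Latt$-invariant and extend with only log poles. As written, both halves of your proposal are programmes whose hard steps coincide with the open points, so the proof is incomplete.
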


Mok's proof of the compact case is based on a Finsler metric rigidity theorem. The obstruction to extend his proof in a non compact context is the use of integrals over closed subvarieties: the integrability of differential forms is obvious in the compact case but not in the non compact case. To answer this problem, we use real polarized variations of Hodge structures (PVHS). Indeed, in \cite[\S 4]{zucker1981locally}, Zucker allows us to see the logarithmic tangent bundle and its metric as a holomorphic subquotient bundle of the holomorphic bundle assoicated with the local system of some PVHS, with the metric induced by the Hodge metric. Then, we can use the careful study of the singularities of this metric done by Cattani, Kaplan, Schmid \cite{cattani1986degeneration} and Koll{\'a}r \cite{kollar1987subadditivity}.

\bigskip

Using the same proof, Theorem \ref{thm:mainapp} generalizes to every integral model of Shimura variety which admits a convenient compactification, instead of $\Agn$ (see for instance \cite{lan2008arithmetic}). In the case $k = \C$, Theorem \ref{thm:main} also implies the two following theorems. The first one is a generalization of Theorem \ref{thm:mainapp} and of \cite[Corollary 1.6]{liu2017curvatures}. The second one is a consequence of \cite[Theorem A]{brotbek2018positivity}.

\begin{thm}\label{thm:gen}
	A family of smooth projective curves of genus $g \geq 2$ over a smooth complex algebraic variety $\Lsav$ whose universal covering is an irreducible bounded symmetric domain of rank at least $2$ is isotrivial.
\end{thm}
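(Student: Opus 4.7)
The plan is to mimic the strategy outlined in the introduction for Theorem~\ref{thm:mainapp}, replacing $\Agn \otimes \C$ by the given $\Lsav$ and invoking Theorem~\ref{thm:main} directly (so that Propositions~\ref{prop:ext} and~\ref{prop:spe} are not needed). Let $f\colon \Lsav \to \Mg \otimes \C$ be the morphism classifying the family; one must show that the composition $\Lsav \to \Mg \otimes \C \to M_g \otimes \C$ is constant. To work with a scheme, pick a finite étale cover $q\colon M \to \Mg \otimes \C$ with $M$ a scheme, and form the fibered product to obtain a finite étale cover $p\colon \Lsav' \to \Lsav$ admitting a lift $f'\colon \Lsav' \to M$. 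Since $p$ is finite étale, the universal cover of $\Lsav'$ is the same irreducible bounded symmetric domain of rank $\geq 2$, so Theorem~\ref{thm:main} applies to $\Lsav'$ as well.

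Choose a projective logarithmic compactification of $\Lsav'$ and a smooth projective logarithmic compactification $(\bar M, \Div_M)$ of $M$ which are compatible with $f'$ (blowing up if necessary to make $f'$ extend). On $(\bar M, \Div_M)$ the logarithmic symmetric forms separate non-zero tangent vectors, as recalled in the introduction. By Proposition~\ref{prop:functform} they pull back via $f'$ to logarithmic symmetric forms on the chosen compactification of $\Lsav'$. Theorem~\ref{thm:main} applied to $\Lsav'$ then guarantees that every such pulled-back form vanishes on the characteristic subvariety $\Charac$ sitting above $\Lsav'$. Fix $y \in \Lsav'$ and $v \in T_y \Lsav'$ with $[v] \in \Charac$. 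If $(\di f')(v) \neq 0$, separation would produce a form $\omega$ on $(\bar M, \Div_M)$ not vanishing on $(\di f')(v)$, so that $(f')^{*}\omega$ would not vanish on $v$, contradicting the vanishing just established. Hence $(\di f')(v) = 0$ for every $v$ with $[v] \in \Charac$.

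Because the universal cover of $\Lsav'$ has rank at least $2$, Mok's description of the characteristic subvariety implies that the fiber $\Charac \cap \pi^{-1}(y)$ linearly spans $T_y \Lsav'$ at every point $y$. Since $\di f'$ vanishes on a spanning subset of every tangent space, $\di f' \equiv 0$; as $\Lsav'$ is connected, $f'$ is constant, and consequently $f$ has constant image in $M_g \otimes \C$, which is the definition of isotriviality. This final linear-algebra step is essentially the content of Proposition~\ref{prop:cste}.

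The principal obstacle is twofold. On the technical side, one must verify that Propositions~\ref{prop:functform} and~\ref{prop:cste}, stated in the paper with $\Agn$ in mind, go through verbatim for an arbitrary smooth complex algebraic variety equipped with a projective logarithmic compactification; this is essentially formal. The substantive point, that $\Charac$ linearly spans each fiber of the tangent bundle of $\Lsav'$, is part of Mok's analysis of the characteristic subvariety on irreducible Hermitian symmetric spaces and is precisely where the rank $\geq 2$ hypothesis is used: for rank $1$ quotients the subvariety $\Charac$ degenerates and the argument genuinely breaks down, in agreement with the existence of non-isotrivial families over complex ball quotients.
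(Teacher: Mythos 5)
Your proposal is correct and follows essentially the same route as the paper: pass to a finite \'etale cover $\Lsav'$ of $\Lsav$ lifting the classifying map to a scheme cover $M$ of $\Mg\otimes\C$, use Lemma \ref{lem:vie} (the separation of tangent vectors by logarithmic symmetric forms on $M$, which you quote from the introduction) together with Theorem \ref{thm:main} to see that $\PBL(\Lsav')$ is linearly non-degenerate while $\PBL(M)$ is empty, and conclude by Proposition \ref{prop:cste}. Your explicit unpacking of the linear-algebra step and of the role of the rank $\geq 2$ hypothesis matches what the paper compresses into the citation of Proposition \ref{prop:cste}.
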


\begin{thm}\label{thm:bro}
	Let $\Lsav$ be a smooth complex algebraic variety whose universal covering is an irreducible bounded symmetric domain of rank at least $2$. Let $\Prv$ be a smooth projective variety of dimension $\di$. Let $L$ be an ample divisor on $\Prv$. Let $c \geq \di$, $m \geq (4\di)^{\di+2}$. Then, for a general divisor $\Div$ with $c$ irreducible components all in $\lvert L^m \rvert$, the image of every non constant morphism $\Lsav \to \Prv$ intersects $\Div$.
\end{thm}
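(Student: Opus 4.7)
The plan is to argue by contradiction, combining Theorem \ref{thm:main} with \cite[Theorem A]{brotbek2018positivity}. Suppose that a general $\Div$ as in the statement is fixed and that there exists a non constant morphism $f : \Lsav \to \Prv$ with $f(\Lsav) \cap \Div = \emptyset$. The goal is to produce a logarithmic symmetric form on a projective logarithmic compactification of $\Lsav$ which does not vanish at some point of the characteristic subvariety $\Charac$, contradicting Theorem \ref{thm:main}.

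First, I would fix any projective logarithmic compactification $(\Comp, \partial \Lsav)$ of $\Lsav$. The morphism $f$ need not extend to $\Comp$, so by Hironaka I would choose a birational modification $\Comp' \to \Comp$, isomorphic over $\Lsav$, to which $f$ extends as $\bar f : \Comp' \to \Prv$ and such that $\partial' := \Comp' \setminus \Lsav$ remains a simple normal crossing divisor. Then $(\Comp', \partial')$ is still a projective logarithmic compactification of $\Lsav$, so Theorem \ref{thm:main} applies to it. The crucial observation is that, since $f(\Lsav) \cap \Div = \emptyset$, the pre-image $\bar f^{-1}(\Div)$ is set-theoretically contained in $\partial'$; consequently the canonical morphism
\[
\bar f^{*} \Omega_{\Prv}^1(\log \Div) \longrightarrow \Omega_{\Comp'}^1(\log \partial')
\]
is well defined, and induces a pull back on logarithmic symmetric forms of every order.

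Next I would invoke \cite[Theorem A]{brotbek2018positivity}: for a general $\Div$ in the range of the statement, the augmented base locus of the tautological bundle on $\Proj \Omega_{\Prv}^1(\log \Div)$ is contained in the pre-image of $\Div$. In particular, for every $y \in \Prv \setminus \Div$ and every $[w] \in \Proj T_y \Prv$, there exist $k \geq 1$ and $\omega \in H^0(\Prv, \Sym^k \Omega_{\Prv}^1(\log \Div))$ not vanishing at $(y, [w])$. Since $f$ is non constant, I can pick $x \in \Lsav$ with $df_x \neq 0$. A classical property of bounded symmetric domains of rank at least $2$, already invoked in the excerpt, is that $\Charac_x$ linearly spans $T_x \Lsav$; thus $\Charac_x$ is not contained in the proper projective subspace $\Proj \ker df_x$, and I can choose $[v] \in \Charac_x$ with $df_x(v) \neq 0$. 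Applying the previous paragraph to $(y, [w]) = (f(x), [df_x(v)])$ gives $\omega$ such that $\omega(df_x(v)) \neq 0$; the pull back $\bar f^{*} \omega$ is then a logarithmic symmetric form on $(\Comp', \partial')$ that does not vanish at $(x, [v]) \in \Charac$, contradicting Theorem \ref{thm:main}.

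The main obstacle is controlling the pull back step: Theorem \ref{thm:main} only constrains logarithmic forms with poles along $\partial'$, so the logarithmic poles of $\bar f^{*} \omega$ have to stay inside $\partial'$. This is exactly what $f(\Lsav) \cap \Div = \emptyset$ ensures, once $f$ has been extended on a log resolution. The remaining two ingredients, the linear spanning of $T_x \Lsav$ by $\Charac_x$ and Brotbek's logarithmic almost ampleness, are used essentially as black boxes.
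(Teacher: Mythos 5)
Your argument is correct and is essentially the paper's proof: the paper also combines \cite[Theorem A]{brotbek2018positivity} (emptiness of $\PABL(\Prv\setminus\Div)$, hence of the stable base locus there) with Theorem \ref{thm:main} and the linear non-degeneracy of $\Charac$, the contradiction step you spell out being exactly the content of Proposition \ref{prop:cste}. The only cosmetic difference is that you justify the pull-back of logarithmic symmetric forms by extending $f$ to a log resolution, whereas the paper invokes the intrinsic functoriality of $L^\bullet$ (Proposition \ref{prop:functform}), which makes that step automatic.
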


\section{Base loci of the logarithmic cotangent bundle}\label{section:loci}

Let $k$ be a field. We call \emph{$k$-variety} an integral separated scheme of finite type over $\Spec(k)$. If $\Qpv$ is a smooth $k$-variety, a \emph{log compactification} of $\Qpv$ is the data $(\Prv, \Div)$ of a proper smooth $k$-variety $\Prv$ and a normal crossing divisor $\Div \subset \Prv$ such that $\Qpv$ identifies with the open subset $\Prv \setminus \Div$. A smooth $k$-variety is \emph{(projectively) log compactifiable} if it admits a (projective) log compactification.

\subsection{Absolutely logarithmic symmetric forms}

In the following subsection, we will have to consider the graded $k$-algebra of logarithmic symmetric forms on a log compactification of a log compactifiable smooth $k$-variety. This graded $k$-algebra is actually independent of the choice of the log compactification, and Abramovich generalized its definition to every $k$-variety in \cite[Definition 1]{abramovich1994subvarieties}. We give here a reformulation of Abramovich's definition in the smooth case.

\bigskip

Let $K$ be a separable function field over $k$. A divisorial valuation of $K/k$ is a valuation which vanishes of $K/k$ whose residual field has transcendence degree $d - 1$ over $k$, with $d$ the transcendence degree of $K$ over $k$. A valuation is divisorial if and only if it is the valuation defined by a prime divisor of a normal $k$-variety with $K$ as function field.

\bigskip

Let $\nu$ be a divisorial valuation of $K/k$. Let $A$ be the valuation ring $\{x \in K, \nu(x) \geq 0\}$, $\ma$ its maximal ideal $\{x \in K, \nu(x) \geq 1\}$, $\ma^{-1}$ the $A$-submodule $\{x \in K, \nu(x) \geq -1\}$ of $K$, $\kappa := A/\ma$ the residual field of $A$. The canonical maps $A \to \kappa$ and $A \to K$ induce morphisms of $A$-modules $\pi: \Omega_{A/k}^1 \to \Omega_{\kappa/k}^1$ and $f: \Omega_{A/k}^1 \to \Omega_{K/k}^1$. We denote by $F_\nu^1$ (resp. $L_\nu^1$) the $A$-submodule $\Img f$ (resp. $\ma^{-1}f(\ker \pi)$) of $\Omega_{K/k}^1$. Remark that $F_\nu^1 \subset L_\nu^1$. The following proposition is easy to prove.

\begin{prop}\label{prop:lienabr}
	A form $\omega \in \Omega_{K/k}^1$ is in $L_\nu^1$ if and only if there exist $r \in \N$, $u \in \ma$, $f_0, \dots, f_r, x_1, \dots, x_r \in A$ such that $\omega = f_0\frac{\dif u}{u} + \sum_{i=1}^{r} f_i \dif x_i$.
\end{prop}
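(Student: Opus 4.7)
The plan is to unfold the definition of $L_\nu^1$, using the fact that the valuation ring $A$ is a discrete valuation ring (because $\nu$ is divisorial, hence of rank one with a principal maximal ideal). I would fix a uniformizer $u_0 \in A$, so that $\ma = (u_0)$ and $\ma^{-1} = u_0^{-1} A$ as a fractional ideal of $K$. With this setup the proof splits into the two implications, each of which reduces to a short manipulation.

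\textbf{The converse direction.} I would first establish the inclusion $F_\nu^1 \subset L_\nu^1$ that is asserted, but not proven, just above the statement: given $\omega_0 \in \Omega_{A/k}^1$, the element $u_0\omega_0$ lies in $\ker \pi$ because $\pi(u_0\omega_0) = \bar{u}_0\,\pi(\omega_0) = 0$, so $f(\omega_0) = u_0^{-1} f(u_0\omega_0) \in \ma^{-1} f(\ker \pi) = L_\nu^1$. This handles the summand $\sum_{i=1}^{r} f_i\,\dif x_i$. For the logarithmic term $f_0\tfrac{\dif u}{u}$ with $u \in \ma \setminus \{0\}$, I would decompose $u = a u_0^n$ with $a \in A^\times$ and $n = \nu(u) \geq 1$; the logarithmic derivative formula gives
\[\frac{\dif u}{u} = \frac{\dif a}{a} + n\,\frac{\dif u_0}{u_0}.\]
The first summand lies in $F_\nu^1$ since $a$ is a unit. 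For the second, $\pi(f_0\,\dif u_0) = \bar{f}_0\,\dif \bar{u}_0 = 0$ because $\bar{u}_0 = 0$ in $\kappa$, so $f_0\tfrac{\dif u_0}{u_0} = u_0^{-1} f(f_0\,\dif u_0) \in L_\nu^1$.

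\textbf{The direct direction.} Conversely, given $\omega \in L_\nu^1$, the identity $\ma^{-1} = u_0^{-1} A$ lets me absorb the coefficients into a single factor and write $\omega = u_0^{-1} f(\eta)$ for some $\eta \in \ker \pi$. The key step is to describe $\ker \pi$ modulo $\ma\,\Omega_{A/k}^1$. The second fundamental exact sequence of Kähler differentials for $A \twoheadrightarrow \kappa$ is
\[\ma/\ma^2 \xrightarrow{\;\dif\;} \Omega_{A/k}^1 / \ma\,\Omega_{A/k}^1 \longrightarrow \Omega_{\kappa/k}^1 \longrightarrow 0,\]
and since $\ma/\ma^2$ is one-dimensional over $\kappa$ with basis the class of $u_0$, this shows that $\eta \equiv c\,\dif u_0 \pmod{\ma\,\Omega_{A/k}^1}$ for some $c \in A$. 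Writing $\eta = c\,\dif u_0 + u_0 \mu$ with $\mu \in \Omega_{A/k}^1$ and expanding $\mu = \sum_{i=1}^{r} f_i\,\dif x_i$ with $f_i, x_i \in A$ gives
\[\omega = u_0^{-1} f(\eta) = c\,\frac{\dif u_0}{u_0} + \sum_{i=1}^{r} f_i\,\dif x_i,\]
the required form with $u = u_0 \in \ma$ and $f_0 = c$.

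\textbf{Main obstacle.} The only genuinely non-formal point is the identification $\ker \pi = A\cdot \dif u_0 + \ma\,\Omega_{A/k}^1$ coming from the second fundamental exact sequence, which crucially uses that $A$ is a DVR so that $\ma/\ma^2$ has dimension one over $\kappa$ with a known generator. Everything else in the argument is bookkeeping with $\nu$-adic orders and the factorization of any $u \in \ma$ as a unit times a power of $u_0$.
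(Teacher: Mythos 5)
Your proof is correct: the paper leaves this proposition as "easy to prove," and your argument (reducing to a single uniformizer $u_0$ via $\ma^{-1}=u_0^{-1}A$, the logarithmic-derivative splitting $\frac{\dif u}{u}=\frac{\dif a}{a}+n\frac{\dif u_0}{u_0}$, and the conormal exact sequence identifying $\ker\pi$ with $A\,\dif u_0+\ma\,\Omega_{A/k}^1$) is exactly the intended unfolding of the definition of $L_\nu^1$. The only cosmetic points are that $u$ must implicitly be nonzero for $\frac{\dif u}{u}$ to make sense (which you note), and that only the forward inclusion $\ker\pi\subset A\,\dif u_0+\ma\,\Omega_{A/k}^1$ is actually needed.
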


We denote by $F_\nu^\bullet = \bigoplus_{m \in \N} F_\nu^m$ (resp. $L_\nu^\bullet = \bigoplus_{m \in \N} L_\nu^m$) the free symmetric $A$-algebra generated by $F_\nu^1$ (resp. $L_\nu^1$) with its canonical graduation. The inclusions of $F_\nu^1$ (resp. $L_\nu^1$) in $\Omega_{K/k}^1$ induces a morphism $\iota_F$ (resp. $\iota_L$) of graded $A$-algebras from $F_\nu^\bullet$ (resp. $L_\nu^\bullet$) to $D_K^\bullet := \bigoplus_{m \in \N} \Sym_K^m \Omega_{K/k}^1$. This last graded $A$-algebra is a graded algebra of polynomials over $K$. The following lemma is easy to prove.

\begin{lem}\label{lem:fnl}
	\begin{enumerate}
		\item The $A$-modules $F_\nu^1$ and $L_\nu^1$ are free and finite.
		\item The graded $A$-algebras $F_\nu^\bullet$ and $L_\nu^\bullet$ are graded algebras of polynomials over $A$.
		\item The morphisms $\iota_F$ and $\iota_L$ are injective.
	\end{enumerate}
\end{lem}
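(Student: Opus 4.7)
The plan is to reduce everything to statements about finitely generated modules over a DVR. By the characterization of divisorial valuations recalled just before the lemma, $A = \mathcal{O}_{X, D}$ for some prime divisor $D$ on a normal $k$-variety $X$ with function field $K$; in particular $A$ is a discrete valuation ring (hence a PID), essentially of finite type over $k$, and $\Omega_{A/k}^1$ is a finitely generated $A$-module. I fix a uniformizer $u \in \ma$, so that $\ma^{-1} = u^{-1} A$.

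For (1), both $F_\nu^1$ and $L_\nu^1$ are then finitely generated $A$-submodules of $\Omega_{K/k}^1$: for $F_\nu^1$ this is direct from the finite generation of $\Omega_{A/k}^1$; for $L_\nu^1 = u^{-1} f(\ker \pi)$ one uses in addition that $\ker \pi$ is a submodule of the Noetherian module $\Omega_{A/k}^1$. Since $K/k$ is separable of transcendence degree $d$, $\Omega_{K/k}^1$ is a $d$-dimensional $K$-vector space, so $F_\nu^1$ and $L_\nu^1$ are torsion-free; finitely generated torsion-free modules over a PID are free of finite rank, which gives (1). For later use, I would also record that both have generic rank exactly $d$: tensoring the sequence $0 \to \ker \pi \to \Omega_{A/k}^1 \to \Img \pi \to 0$ with $K$ over $A$ kills the $\kappa$-module $\Img \pi$ and identifies $\ker \pi \otimes_A K$ with $\Omega_{A/k}^1 \otimes_A K = \Omega_{K/k}^1$, so that $F_\nu^1 \otimes_A K = L_\nu^1 \otimes_A K = \Omega_{K/k}^1$.

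Assertion (2) is then formal: once $F_\nu^1$ (resp.\ $L_\nu^1$) is known to be free of rank $d$, any choice of $A$-basis identifies $\Sym_A^\bullet F_\nu^1$ (resp.\ $\Sym_A^\bullet L_\nu^1$) with a polynomial algebra in $d$ variables over $A$. For (3), I would factor $\iota_F$ as the localization $\Sym_A^\bullet F_\nu^1 \to (\Sym_A^\bullet F_\nu^1) \otimes_A K$ followed by the canonical identification $(\Sym_A^\bullet F_\nu^1) \otimes_A K \cong \Sym_K^\bullet(F_\nu^1 \otimes_A K) \cong \Sym_K^\bullet \Omega_{K/k}^1 = D_K^\bullet$, where the second isomorphism uses the generic-rank computation of the previous paragraph. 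The localization map is injective because, by (2), the source is a free and therefore torsion-free $A$-module; the same argument treats $\iota_L$.

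No step is genuinely difficult, and the paper itself announces the lemma as easy. The only minor points that require a moment of care are the finite generation of $\Omega_{A/k}^1$ (which is why I emphasize at the outset that $A$ is essentially of finite type over $k$) and the identification of $F_\nu^1 \otimes_A K$ with $\Omega_{K/k}^1$, which is precisely what makes step (3) go through cleanly.
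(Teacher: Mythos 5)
Your proof is correct and is the standard argument the paper has in mind (the paper simply declares the lemma ``easy to prove'' and gives no proof): identify $A$ with the local ring of a prime divisor on a normal $k$-variety, so that it is a Noetherian DVR with $\Omega^1_{A/k}$ finite, deduce that $F_\nu^1$ and $L_\nu^1$ are finite torsion-free hence free, and get (2) and (3) by base change to $K$ together with the generic-rank identification $F_\nu^1\otimes_A K = L_\nu^1\otimes_A K = \Omega^1_{K/k}$. Nothing is missing.
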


Let $\Qpv$ be a smooth $k$-variety. Let $K$ be its functions field. We denote by $D_\Qpv^\bullet$ the graded $k$-subalgebra $\bigoplus_{m \in \N} H^0(\Qpv, \Sym^m \Omega_{\Qpv}^1)$ of $D_K^\bullet$. As every symmetric differential defined in codimension 1 on $\Qpv$ extends as a global symmetric differential form on $\Qpv$, $D_\Qpv^\bullet$ identify with the intersection of the $F_\nu^\bullet$'s over all divisorial valuations $\nu$ of $K/k$ centered in $\Qpv$.

\bigskip

We define the graded $k$-algebra $L_\Qpv^\bullet$ as the intersection of $D_\Qpv^\bullet$ with the $L_\nu^\bullet$'s for every divisorial valuation $\nu$ of $K/k$. Proposition \ref{prop:lienabr} shows that $L_\Qpv^\bullet$ is the algebra of \emph{absolutely logarithmic} symmetric differential forms defined by Abramovich in \cite[Definition 1]{abramovich1994subvarieties}. Then we get the following properties.

\begin{prop}\cite[Remark p.46]{abramovich1994subvarieties}\label{prop:abreq}
	If $(\Prv, \Div)$ is a log compactification of $\Qpv$, then $L_\Qpv^\bullet$ is the graded $k$-algebra $\bigoplus_{m \in \N} H^0(\Prv, \Sym^m \Omega_{\Prv}^1(\log \Div))$.
\end{prop}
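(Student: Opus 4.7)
The plan is to verify the equality of graded algebras degree by degree: for each $m \in \N$, both $L_\Qpv^m$ and $H^0(\Prv, \Sym^m \Omega_{\Prv}^1(\log \Div))$ embed naturally in $D_K^m$, and one shows they coincide as subsets. The compatibility of algebra structures is automatic, since both are induced from $D_K^\bullet$.

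For the inclusion $H^0(\Prv, \Sym^m \Omega_{\Prv}^1(\log \Div)) \subset L_\Qpv^m$, I would start from a global log form $\omega$. Its restriction to $\Qpv$ is regular, so $\omega \in D_\Qpv^m$. Given any divisorial valuation $\nu$ of $K/k$, properness of $\Prv$ provides a center $\xi$ for $\nu$ on $\Prv$, and the valuation ring $A$ dominates $\mathcal{O}_{\Prv,\xi}$. Locally at $\xi$, $\omega$ is an $\mathcal{O}_{\Prv,\xi}$-polynomial in terms $\frac{\dif t_i}{t_i}$, with the $t_i$ local equations of the components of $\Div$ through $\xi$, and $\dif z_j$ for regular functions $z_j$ completing a local coordinate system. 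Since $t_i \in \ma$ (as $t_i$ vanishes at $\xi$, hence $\nu(t_i) \geq 1$) and $z_j \in A$, Proposition~\ref{prop:lienabr} places $\frac{\dif t_i}{t_i}$ and $\dif z_j$ in $L_\nu^1$. Thus $\omega$ belongs to the symmetric $A$-algebra generated by $L_\nu^1$, which is $L_\nu^m$ by definition.

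For the reverse inclusion, I take $\omega \in L_\Qpv^m$. Since $\omega \in D_\Qpv^m$, it extends to a regular form on $\Qpv$. It then suffices to extend $\omega$ as a log form in a neighborhood of the generic point $\eta_i$ of each component $\Div_i$: the sheaf $\Sym^m \Omega_{\Prv}^1(\log \Div)$ is locally free, hence reflexive, on the smooth variety $\Prv$, and sections of reflexive sheaves over a normal variety extend across subsets of codimension at least $2$. At $\eta_i$, the local ring $A_i = \mathcal{O}_{\Prv,\eta_i}$ is a DVR with uniformizer $t_i$. Using Proposition~\ref{prop:lienabr} and writing any $u \in \ma_i$ as $u = t_i^n v$ with $v$ a unit (so that $\frac{\dif u}{u}$ is congruent to $n \frac{\dif t_i}{t_i}$ modulo regular forms), one verifies
\[ L_{\nu_i}^1 = A_i \cdot \tfrac{\dif t_i}{t_i} + F_{\nu_i}^1, \]
and $F_{\nu_i}^1$ equals the stalk of $\Omega_{\Prv}^1$ at $\eta_i$ by smoothness. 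This matches the local description of $\Omega_{\Prv}^1(\log \Div)_{\eta_i}$, and the hypothesis $\omega \in L_{\nu_i}^m = \Sym^m_{A_i} L_{\nu_i}^1$ then produces the desired log extension at $\eta_i$.

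The main obstacle is the explicit local identification of $L_{\nu_i}^1$ at generic points of $\Div$ with the stalk of the logarithmic cotangent bundle. Once that identification is in hand, both inclusions reduce to local verifications, together with a standard reflexivity extension argument across the codimension-$2$ locus where several components of $\Div$ meet.
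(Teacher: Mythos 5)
Your proof is correct. The paper itself gives no argument here — it simply cites Abramovich's remark — and your two inclusions (properness supplying a center for an arbitrary divisorial valuation, so that the local log generators $\frac{\dif t_i}{t_i}$ and $\dif z_j$ land in $L_\nu^1$; and, conversely, the identification $L_{\nu_i}^1 = A_i\cdot\frac{\dif t_i}{t_i} + F_{\nu_i}^1$ at the generic points of $\Div$ followed by reflexive extension across codimension $2$) constitute exactly the standard verification the citation is standing in for.
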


Let $\Qpv, \Qpv'$ be smooth $k$-varieties, let $f: \Qpv \to \Qpv'$ be a morphism. Then pull-back of differential forms induces a morphism of graded $k$-algebras $f^*: D_{\Qpv'}^\bullet \to D_\Qpv^\bullet$.

\begin{prop}\cite[Lemma 5]{abramovich1994subvarieties}\label{prop:functform}
	Suppose $\Qpv'$ is log compactifiable. Then $f^*L_{\Qpv'}^\bullet \subset L_\Qpv^\bullet$.
\end{prop}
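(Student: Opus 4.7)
The approach is to check the desired inclusion one divisorial valuation at a time. Since $L_\Qpv^\bullet = D_\Qpv^\bullet \cap \bigcap_{\nu} L_\nu^\bullet$, and since $f^*\omega$ is automatically a global symmetric form on $\Qpv$ (for any log compactification $(\Prv', \Div')$ of $\Qpv'$ the form $\omega$ is regular on $\Qpv' = \Prv' \setminus \Div'$, and pullback preserves regular forms), it suffices to fix a divisorial valuation $\nu$ of $K := K(\Qpv)$ over $k$, with ring $A$ and maximal ideal $\ma$, and prove $f^*\omega \in L_\nu^m$ for every $\omega \in L_{\Qpv'}^m$.

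The first step is to invoke Proposition \ref{prop:abreq} to choose a log compactification $(\Prv', \Div')$ of $\Qpv'$ and view $\omega$ as a section of $\Sym^m \Omega_{\Prv'}^1(\log \Div')$. I would then apply the valuative criterion of properness to $\Prv' \to \Spec k$ and to the composition $\Spec K \hookrightarrow \Qpv \xrightarrow{f} \Qpv' \hookrightarrow \Prv'$ in order to get a unique extension $\Spec A \to \Prv'$, mapping the closed point to some $p \in \Prv'$; this is the same data as a local ring morphism $\varphi : \mathcal{O}_{\Prv', p} \to A$. Choosing étale-local coordinates $x_1, \dots, x_n$ around $p$ adapted to $\Div'$ (so that $\Div'$ is cut out by $x_1 \cdots x_r$), the sheaf $\Sym^m \Omega_{\Prv'}^1(\log \Div')$ admits locally a basis of monomials of degree $m$ in $\frac{\dif x_1}{x_1}, \dots, \frac{\dif x_r}{x_r}, \dif x_{r+1}, \dots, \dif x_n$, and $\omega$ reads locally as $\sum_I h_I \tau_I$ with $h_I \in \mathcal{O}_{\Prv', p}$. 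Setting $y_i := \varphi(x_i) \in A$ and $h_I' := \varphi(h_I) \in A$, the pullback $f^*\omega \in \Sym^m \Omega_{K/k}^1$ is the analogous sum $\sum_I h_I' \tilde\tau_I$ obtained by substitution. By locality of $\varphi$, $y_i \in \ma$ for $i \leq r$; by Proposition \ref{prop:lienabr}, each $\frac{\dif y_i}{y_i}$ is then in $L_\nu^1$ and each $\dif y_j$ lies in $F_\nu^1 \subset L_\nu^1$; hence $f^*\omega \in L_\nu^m$ because $L_\nu^\bullet$ is the symmetric $A$-algebra generated by $L_\nu^1$.

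The key subtlety — and the one place I expect any real difficulty — is checking that $y_i \neq 0$ for $i \leq r$, so that the expression $\frac{\dif y_i}{y_i}$ actually makes sense. When $f$ is dominant this is immediate, $\varphi$ factoring through the injection $\mathcal{O}_{\Prv', p} \hookrightarrow K(\Prv')$. In the general case, the image $\xi$ of the generic point of $\Qpv$ under $f$ still lies in $\Qpv'$, hence $\xi \notin \Div'$ and $x_i \notin \ma_\xi$ for $i \leq r$; factoring $\varphi$ as $\mathcal{O}_{\Prv', p} \to \mathcal{O}_{\Prv', \xi} \twoheadrightarrow \kappa(\xi) \hookrightarrow K$ then shows that $x_i$ survives the composition, so $y_i \neq 0$. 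Once this point is settled, the remainder of the argument is a purely formal manipulation combining Propositions \ref{prop:lienabr} and \ref{prop:abreq} with the algebra structure of $L_\nu^\bullet$.
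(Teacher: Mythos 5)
Your argument is correct, and it is essentially the standard proof of this fact; the paper itself offers no proof here, simply citing \cite[Lemma 5]{abramovich1994subvarieties}, whose argument proceeds along the same lines (extend $\Spec K \to \Prv'$ to $\Spec A \to \Prv'$ by the valuative criterion, write $\omega$ in a logarithmic frame at the center $p$, and pull back). You correctly isolate and resolve the only delicate point, namely that $y_i \neq 0$ via factoring through $\kappa(\xi)$ with $\xi \in \Qpv'$. One small point to tidy: you should take a genuine regular system of parameters $x_1,\dots,x_n$ of $\Stru_{\Prv',p}$ adapted to the branches of $\Div'$ through $p$ (available when $\Div'$ is strict normal crossings), rather than merely étale-local coordinates, so that the coefficients $h_I$ actually lie in $\Stru_{\Prv',p}$ and $\varphi$ applies to them; otherwise one must additionally extend $\nu$ to the étale extension, which is routine but should be said.
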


\begin{prop}
	The integral domain $L_\Qpv^\bullet$ is integrally closed.
\end{prop}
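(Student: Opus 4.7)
The plan is to realise $L_\Qpv^\bullet$ as an intersection, taken inside the common ambient ring $D_K^\bullet$, of subrings that are individually integrally closed in their own fraction fields, and then to deduce integral closedness of the intersection by a standard argument.

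First I would check that for every divisorial valuation $\nu$ of $K/k$, the graded $A$-algebras $F_\nu^\bullet$ and $L_\nu^\bullet$ are integrally closed in their fraction fields. By Lemma \ref{lem:fnl}, both are graded polynomial algebras over $A$; since a divisorial valuation ring is a DVR (in particular, a normal domain), and since polynomial algebras over a normal domain are normal, this is immediate.

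Next I would rewrite $L_\Qpv^\bullet$ as a single intersection inside $D_K^\bullet$. The paragraph preceding Proposition \ref{prop:abreq} identifies $D_\Qpv^\bullet$ with $\bigcap_\nu F_\nu^\bullet$, the intersection ranging over divisorial valuations of $K/k$ centered in $\Qpv$; combining with the definition of $L_\Qpv^\bullet$ yields
\[
L_\Qpv^\bullet \;=\; \Bigl(\bigcap_{\nu \text{ centered in } \Qpv} F_\nu^\bullet\Bigr) \,\cap\, \Bigl(\bigcap_{\nu} L_\nu^\bullet\Bigr),
\]
all intersections taken inside the polynomial $K$-algebra $D_K^\bullet$. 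Now let $x$ be an element of the fraction field of $L_\Qpv^\bullet$ that is integral over $L_\Qpv^\bullet$. Because every ring $R$ appearing in the above intersection contains $L_\Qpv^\bullet$ and is a subdomain of $D_K^\bullet$, the fraction field of $L_\Qpv^\bullet$ embeds into $\Frac(R)$; moreover $x$ is integral over $R$. By the first step, $R$ is integrally closed in $\Frac(R)$, so $x \in R$. Taking the intersection over all such $R$ gives $x \in L_\Qpv^\bullet$, which is the desired conclusion.

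There is no real obstacle here, but the one point to be careful about is the compatibility of fraction fields: I use repeatedly that if $A \subset B$ are domains sharing the same ambient domain, then $\Frac(A) \subset \Frac(B)$ inside the ambient fraction field, which legitimises the comparison of the integral closures of $L_\Qpv^\bullet$ and of the $F_\nu^\bullet$, $L_\nu^\bullet$.
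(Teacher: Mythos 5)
Your proposal is correct and follows essentially the same route as the paper: the paper's proof is precisely the one-line observation that $L_\Qpv^\bullet$ is an intersection, inside $D_K^\bullet$, of the integral domains $F_\nu^\bullet$ and $L_\nu^\bullet$, which are integrally closed by Lemma \ref{lem:fnl}. You have merely spelled out the details the paper leaves implicit (normality of polynomial algebras over the DVR $A$, and the compatibility of fraction fields needed to pass integral closedness to the intersection), and these details are all sound.
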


\begin{proof}
	It is the intersection of some integral domains $F_\nu^\bullet$'s and $L_\nu^\bullet$'s that are integrally closed by Lemma \ref{lem:fnl}.
\end{proof}

\begin{prop}\label{prop:covform}
	Let $\Qpv, \Base$ be smooth $k$-varieties. Let $f: \Qpv \to \Base$ be a finite étale covering map. Then $L_\Qpv^\bullet$ is the integral closure of $f^*L_\Base^\bullet$ in $D_\Qpv^\bullet$.
\end{prop}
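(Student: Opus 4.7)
One direction is essentially free. The inclusion $f^*L_\Base^\bullet\subset L_\Qpv^\bullet$ is Proposition~\ref{prop:functform}, and the preceding proposition shows $L_\Qpv^\bullet$ is integrally closed in $D_\Qpv^\bullet$; hence any $\omega\in D_\Qpv^\bullet$ integral over $f^*L_\Base^\bullet$ is a fortiori integral over $L_\Qpv^\bullet$, and therefore already lies in $L_\Qpv^\bullet$. The real content is the reverse inclusion: every $\omega\in L_\Qpv^\bullet$ must be shown integral over $f^*L_\Base^\bullet$.

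For this I would pass to an étale Galois closure. Let $g:\tilde\Qpv\to\Qpv$ be such that $\tilde f:=f\circ g:\tilde\Qpv\to\Base$ is a finite Galois étale cover with group $G$, and set $H:=\Gal(\tilde\Qpv/\Qpv)\subset G$. Given homogeneous $\omega\in L_\Qpv^m$, pull it back to $\tilde\omega:=g^*\omega\in L_{\tilde\Qpv}^m$ by Proposition~\ref{prop:functform}; it is $H$-invariant. Consider the monic polynomial
\[
P(T)=\prod_{\sigma H\in G/H}(T-\sigma\tilde\omega),
\]
which has $\tilde\omega$ as a root. Its coefficients are the elementary symmetric functions of the $\sigma\tilde\omega$, and so they are $G$-invariant elements of $L_{\tilde\Qpv}^\bullet$, the latter being a $G$-stable graded subalgebra of $D_{\tilde\Qpv}^\bullet$ by functoriality of $L^\bullet$. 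Granting that such $G$-invariant elements all lie in $\tilde f^*L_\Base^\bullet$, the polynomial $P$ descends to a monic $Q\in L_\Base^\bullet[T]$ with $\tilde f^*Q=P$; since $P(\tilde\omega)=g^*\bigl((f^*Q)(\omega)\bigr)=0$ and $g^*$ is injective on symmetric differentials (as $g$ is dominant), $\omega$ satisfies the monic relation $(f^*Q)(\omega)=0$ over $f^*L_\Base^\bullet$, giving the desired integrality.

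The main obstacle is the Galois descent claim for $L^\bullet$: a $G$-invariant $\alpha\in L_{\tilde\Qpv}^\bullet$ comes from $L_\Base^\bullet$ via $\tilde f^*$. The classical Galois descent of symmetric differentials along the étale quotient $\tilde\Qpv\to\Base$ already produces a unique $\beta\in D_\Base^\bullet$ with $\tilde f^*\beta=\alpha$, so the task is to show $\beta\in L_\nu^\bullet$ for every divisorial valuation $\nu$ of the function field of $\Base$. Fixing $\nu$ and choosing a divisorial extension $\tilde\nu$ to the function field of $\tilde\Qpv$, the hypothesis $\alpha\in L_{\tilde\nu}^\bullet$ reduces the problem to a local statement: if a form $\beta$ on $\Base$ has $\tilde f^*\beta\in L_{\tilde\nu}^\bullet$, then $\beta\in L_\nu^\bullet$. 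I would settle this via the explicit generators of Proposition~\ref{prop:lienabr}, combined with the observation that the extension of valuation rings $A_\nu\hookrightarrow A_{\tilde\nu}$ arising from the finite separable extension of function fields is faithfully flat with controlled behaviour on uniformizers, so that the logarithmic part $f_0\frac{\dif u}{u}$ and the regular parts $\sum f_i\dif x_i$ of a decomposition of $\tilde f^*\beta$ can be tracked and descended separately to $A_\nu$.
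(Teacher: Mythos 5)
Your plan follows the paper's proof in all essentials: the easy direction via Proposition~\ref{prop:functform} together with the integral closedness of $L_\Qpv^\bullet$, reduction to a Galois closure, Galois-invariance of the symmetric functions of the conjugates of the pulled-back form, and the descent identity $(L_{\tilde{\Qpv}}^\bullet)^G = \tilde f^*L_\Base^\bullet$, which the paper obtains from the valuation-level equality $L_\nu^\bullet = L_{\nu'}^\bullet \cap D_K^\bullet$ that you also isolate as the crux. The only cosmetic differences are that you use the explicit coset product (a power of the minimal polynomial) where the paper takes the minimal polynomial over $\Frac(D_\Base^\bullet)$, and that the paper, like you, leaves the local statement about descent along an extension of divisorial valuations at the level of an assertion.
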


\begin{proof}
	As $f^*: D_\Base^\bullet \to D_\Qpv^\bullet$ is injective, we consider $D_\Base^\bullet$ as a graded $k$-subalgebra of $D_\Qpv^\bullet$. Let $K$ (resp. $K'$) be the function field of $\Base$ (resp. $\Qpv$). Then $f$ induces a finite separable extension of fields $K'/K$. Hence, every divisorial valuation of $K/k$ is the restriction of a divisorial valuation of $K'/k$. Moreover, if $\nu'$ is a divisorial valuation of $K'/k$ and $\nu$ is its restriction to $K$, then $\nu$ is divisorial and $ L_{\nu}^\bullet = L_{\nu'}^\bullet \cap D_K^\bullet$. Therefore $L_\Base^\bullet = L_\Qpv^\bullet \cap D_\Base^\bullet$ and, with the previous proposition, the integral closure of $L_\Base^\bullet$ in $D_\Qpv^\bullet$ is included in $L_\Qpv^\bullet$.
	
	To prove the converse, we can suppose that $f$ is Galois. Indeed, in any case, if $\Qpv' \to \Base$ is a Galois finite étale covering map which dominates $\Qpv$ and if $L_{\Qpv'}^\bullet$ is the integral closure of $L_\Base^\bullet$ in $D_{\Qpv'}^\bullet$, then the previous inclusion gives $L_\Qpv^\bullet \subset L_{\Qpv'}^\bullet$ which is integral over $L_\Base^\bullet$.
	
	In the case where $f$ is Galois, $L_\Qpv^\bullet$ is stable under the action of the Galois group, hence the minimal polynomial over $\Frac(D_\Base^\bullet)$ of the elements of $L_\Qpv^\bullet$ has coefficients in $L_\Qpv^\bullet \cap D_\Base^\bullet = L_\Base^\bullet$.
\end{proof}

\subsection{Vanishing loci of the logarithmic symmetric forms}

Consider a smooth $k$-variety $\Qpv$ and the projective bundle $\Proj \Omega_{\Qpv}^1 \to \Qpv$ (with Grothendieck's convention: the fiber at a point $x$ is the set of hyperplanes of $\Omega_{\Prv, x}^1$, or equivalently the set of lines of $T_{\Prv, x}$). Then the vanishing locus of a homogeneous element of $L_\Qpv^\bullet$ defines a closed subset of $\Proj \Omega_{\Qpv}^1$ and a closed subset of $T_\Qpv$. Let $m \in \Ns$. We denote by $\PmBL(\Qpv) \subset \Proj \Omega_{\Qpv}^1$ and $\TmBL(\Qpv) \subset T_\Qpv$ the intersection of the vanishing loci of the elements of $L_\Qpv^m$. Remark that $\PmBL$ and $\TmBL$ carry the same information. We denote $\PBL(\Qpv) := \bigcap_{m \in \Ns} \PmBL(\Qpv)$ and $\TBL(\Qpv) := \bigcap_{m \in \Ns} \TmBL(\Qpv)$. Remark that there exists $m \in \Ns$ such that $\PBL(\Qpv) = \PmBL(\Qpv)$ and $\TBL(\Qpv) = \TmBL(\Qpv)$.

\bigskip

Proposition \ref{prop:abreq} implies that if $\Qpv$ admits a log compactification $(\Prv, \Div)$, considering the projective bundle $\pi : \Proj \Omega_{\Prv}^1(\log \Div) \to \Prv$, $\Taut$ the tautological line bundle over $\Proj \Omega_{\Prv}^1(\log \Div)$ and $\Stru(m)$ its tensor power $\Taut^{\otimes m}$ for $m \in \Ns$, then $\PmBL(\Qpv)$ (resp. $\PBL(\Qpv)$) is the intersection of the base locus of $\Stru(m)$ (resp. stable base locus of $\Taut$) with $\Proj \Omega_{\Qpv}^1$.

\bigskip

The partial functoriality of $L^\bullet$ (Proposition \ref{prop:functform}) gives the partial functoriality of base loci.

\begin{prop}\label{prop:funct}
	Let $f: \Qpv \to \Qpv'$ be a morphism between smooth $k$-varieties with $\Qpv'$ log compactifiable. Then for every $m\in\Ns$, $\dif f(\TmBL(\Qpv)) \subset \TmBL(\Qpv')$, and $\dif f(\TBL(\Qpv)) \subset \TBL(\Qpv')$.
\end{prop}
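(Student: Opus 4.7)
The plan is to unwind the definitions and reduce the statement to a direct application of Proposition \ref{prop:functform}. By definition, a tangent vector $v \in T_\Qpv$ lies in $\TmBL(\Qpv)$ if and only if every homogeneous element of $L_\Qpv^m$, viewed as a polynomial function on the fiber of $T_\Qpv$ above the basepoint of $v$, vanishes at $v$. So I would start by fixing $v \in \TmBL(\Qpv)$ and an arbitrary $\omega \in L_{\Qpv'}^m$, and reduce the problem to checking that $\omega(\dif f(v)) = 0$.

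Next, I would use the naturality of evaluation of symmetric differential forms on tangent vectors: for any $\omega \in D_{\Qpv'}^m$, one has $\omega(\dif f(v)) = (f^*\omega)(v)$, where $f^* \colon D_{\Qpv'}^\bullet \to D_\Qpv^\bullet$ is the pull-back morphism introduced before Proposition \ref{prop:functform}. Since $\Qpv'$ is log compactifiable, Proposition \ref{prop:functform} applies and gives $f^*\omega \in L_\Qpv^m$. Because $v \in \TmBL(\Qpv)$ is in the common vanishing locus of all elements of $L_\Qpv^m$, we conclude $(f^*\omega)(v) = 0$, hence $\omega(\dif f(v)) = 0$. As this holds for every $\omega \in L_{\Qpv'}^m$, we obtain $\dif f(v) \in \TmBL(\Qpv')$, which proves the first inclusion.

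The second inclusion follows formally: by definition $\TBL(\Qpv) = \bigcap_{m \in \Ns} \TmBL(\Qpv)$, and the first inclusion applied to each $m$ yields $\dif f(\TBL(\Qpv)) \subset \bigcap_{m \in \Ns} \TmBL(\Qpv') = \TBL(\Qpv')$.

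There is essentially no technical obstacle here; the only point worth watching is that the log compactifiability hypothesis on $\Qpv'$ is indispensable, as it is what lets us invoke Proposition \ref{prop:functform} to ensure $f^*\omega$ is absolutely logarithmic on $\Qpv$ and not merely a regular symmetric form. The proof should therefore fit in a few lines.
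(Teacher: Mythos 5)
Your proof is correct and is exactly the argument the paper intends: the paper gives no separate proof, simply noting that the proposition follows from the partial functoriality of $L^\bullet$ (Proposition \ref{prop:functform}), which is precisely the key step you isolate via the identity $\omega(\dif f(v)) = (f^*\omega)(v)$. Nothing further is needed.
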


\begin{defi}
	A closed subset $C$ of a $\Proj \Omega_{\Qpv}^1$ is \emph{linearly non-degenerate} if for each closed point $x \in \Qpv$, the fiber $C_x$ is not included in a hyperplane of $\Proj \Omega_{\Qpv,x}^1$.
\end{defi}

\begin{prop}\label{prop:cste}
	Let $f: \Qpv \to \Qpv'$ be a morphism between smooth $k$-varieties with $\Qpv'$ log compactifiable. Suppose the corestriction $f: \Qpv \to \overline{\Img f}$ is separable, there exists $m \in \N$ such that $\PmBL(\Qpv)$ is linearly non degenerate and $\PmBL(\Qpv')$ is empty. Hence $f$ is constant. The same statement holds with $\PBL(\Qpv)$ instead of $\PmBL(\Qpv)$ and $\PBL(\Qpv')$ instead of $\PmBL(\Qpv')$.
\end{prop}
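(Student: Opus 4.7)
My plan is to pull logarithmic symmetric forms back from $\Qpv'$ along $f$, and to read the two hypotheses as forcing $\dif f$ to vanish identically. By Proposition \ref{prop:functform}, $f^*L_{\Qpv'}^m \subset L_\Qpv^m$, hence $\PmBL(\Qpv)$ is contained in the common zero locus inside $\Proj \Omega_\Qpv^1$ of the pulled back forms $\{f^*\omega : \omega \in L_{\Qpv'}^m\}$. Fix a closed point $(x,[v]) \in \Proj \Omega_\Qpv^1$ with $\dif f_x(v) \neq 0$. Then $(f(x),[\dif f_x(v)])$ is a closed point of $\Proj \Omega_{\Qpv'}^1$, so by $\PmBL(\Qpv')=\emptyset$ there exists $\omega \in L_{\Qpv'}^m$ not vanishing at it, and the chain rule identity $(f^*\omega)_x(v,\ldots,v) = \omega_{f(x)}(\dif f_x(v),\ldots,\dif f_x(v))$ shows that $f^*\omega$ does not vanish at $(x,[v])$ either. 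Hence the closed fiber $\PmBL(\Qpv)_x$ is contained in $\Proj \ker \dif f_x$ for every closed $x \in \Qpv$.

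Next I invoke the linear non-degeneracy of $\PmBL(\Qpv)$: by assumption $\PmBL(\Qpv)_x$ is not contained in the projectivisation of any proper linear subspace of $T_{\Qpv,x}$. Combined with the inclusion just established, this forces $\ker \dif f_x = T_{\Qpv,x}$ at every closed $x \in \Qpv$, hence $\dif f \equiv 0$ as a bundle map on the smooth variety $\Qpv$.

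To finish, the corestriction $\bar f : \Qpv \to \overline{\Img f}$ is dominant and, by assumption, separable. Generic smoothness for a separable dominant morphism of irreducible varieties produces a non empty open subset of $\Qpv$ on which $\dif \bar f$ has rank equal to $\dim \overline{\Img f}$; since $\dif \bar f$ factors through $\dif f = 0$, the dimension of $\overline{\Img f}$ must be zero, so $f$ is constant. For the $\PBL$ version, I choose $m$ to be a common multiple of two integers $m_0, m_1$ realising $\PBL(\Qpv)=\BL_{m_1}^{\Proj}(\Qpv)$ (linearly non-degenerate) and $\PBL(\Qpv')=\BL_{m_0}^{\Proj}(\Qpv')=\emptyset$, existence of which is given by the remark preceding Proposition \ref{prop:funct}; taking powers of non vanishing symmetric forms shows that both conditions remain valid at this common $m$, reducing the second statement to the first.

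The main potentially delicate point is the first paragraph, where one has to check that the set-theoretic inclusion $\PmBL(\Qpv)_x \subset \Proj \ker \dif f_x$ is correctly obtained from the projective bundle conventions and the chain rule; once this is set up, the rest is linear algebra plus generic smoothness, and the separability hypothesis is used precisely to rule out Frobenius type pathologies in positive characteristic where $\dif f$ can vanish without $f$ being constant.
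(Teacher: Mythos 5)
Your proposal is correct and follows essentially the same route as the paper: the paper simply cites Proposition \ref{prop:funct} (functoriality of the base loci, which is exactly your chain-rule computation showing $\TmBL(\Qpv)\subset\ker\dif f$ when $\PmBL(\Qpv')=\emptyset$), combines it with linear non-degeneracy to get $\dif f=0$, and then invokes separability for constancy. Your write-up merely unpacks these two one-line steps (and the reduction of the $\PBL$ case to the $\PmBL$ case) in more detail.
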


\begin{proof}
	Thanks to Proposition \ref{prop:funct} and the second and third assumptions, $\dif f$ is zero. Hence, thanks to the first assumption, $f$ is constant.
\end{proof}

\begin{prop}\label{prop:covsbl}
	Let $f: \Qpv \to \Base$ be a finite étale covering map of degree $n \in \Ns$ between smooth $k$-varieties. Let $f_{\Proj}$ be the map $\Proj \Omega_{\Qpv}^1 \to \Proj \Omega_{\Base}^1$ induced by $f$. Let $m \in \Ns$. Then $f_{\Proj}^{-1}(\BL_{nm}^{\Proj}(\Base)) \subset \PmBL(\Qpv) \subset f_{\Proj}^{-1}(\PmBL(\Base))$. In particular $\PBL(\Qpv) = f_{\Proj}^{-1}(\PBL(\Base)$.
\end{prop}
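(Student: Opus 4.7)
My plan is to establish the two inclusions separately and then obtain the stable base locus equality by intersecting over $m$. The second inclusion $\PmBL(\Qpv) \subset f_{\Proj}^{-1}(\PmBL(\Base))$ is immediate from Proposition \ref{prop:functform}: given $[v] \in \PmBL(\Qpv)$ and $\omega \in L_\Base^m$, the pullback $f^*\omega$ lies in $L_\Qpv^m$, so $\omega(\dif f(v)) = (f^*\omega)(v) = 0$, whence $[\dif f(v)] \in \PmBL(\Base)$.

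The first inclusion is the substantive part, and I would exploit the integral dependence given by Proposition \ref{prop:covform}. The cleanest case is when $f$ is Galois of group $G$ of order $n$. In this case, for any $\eta \in L_\Qpv^m$ the norm $N(\eta) := \prod_{\sigma \in G}\sigma^*\eta$ is a $G$-invariant element of $L_\Qpv^{nm}$ which, via the identification $L_\Base^\bullet = L_\Qpv^\bullet \cap D_\Base^\bullet$ used in the proof of Proposition \ref{prop:covform}, lies in $L_\Base^{nm}$. Given $[\dif f(v)] \in \BL_{nm}^{\Proj}(\Base)$, evaluation yields $\prod_{\sigma \in G}(\sigma^*\eta)(v) = 0$ for every $\eta$. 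I would then deduce $\eta(v) = 0$ by a linear-algebra argument: consider the $k$-linear map $\Phi : L_\Qpv^m \to k(y)^G$, $\eta \mapsto ((\sigma^*\eta)(v))_\sigma$, and set $W := \Img \Phi$. The product identity forces $W$ into the union of coordinate hyperplanes $\bigcup_\sigma \{t_\sigma = 0\}$; since $W$ is an irreducible linear subspace it lies in some $\{t_{\sigma_0} = 0\}$. But $\Phi$ is equivariant for the $\tau^*$-action on $L_\Qpv^m$ and the translation action $(\tau \cdot w)_\sigma := w_{\tau\sigma}$ on $k(y)^G$, so $W$ is $G$-stable; translating the hyperplane by every element of $G$ forces $W \subset \bigcap_\sigma \{t_\sigma = 0\} = 0$, giving in particular $\eta(v) = \Phi(\eta)_{\id} = 0$. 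For the general (non-Galois) $f$ of degree $n$, I would pass to a Galois closure $p : \Qpv' \to \Qpv$ with $g = f \circ p$ Galois over $\Base$, and combine the Galois case for $g$ with the second inclusion for $p$ and the surjectivity of $p_{\Proj}$.

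The ``in particular'' equality on stable base loci then follows by intersecting the double inclusion over all $m \in \Ns$: since $\BL_{jk}^{\Proj} \subset \BL_j^{\Proj}$ for all $j,k$, one has $\bigcap_m \BL_{nm}^{\Proj}(\Base) = \PBL(\Base) = \bigcap_m \PmBL(\Base)$, so both ends of the chain collapse to $f_{\Proj}^{-1}(\PBL(\Base))$. The step I expect to be most delicate is the ``irreducible linear subspace in a finite union of hyperplanes'' input, which requires the residue field $k(y)$ to be infinite; this is harmless because the vanishing conditions are geometric and invariant under field extension.
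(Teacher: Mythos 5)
Your proposal follows the same overall strategy as the paper's proof: the second inclusion from Proposition \ref{prop:functform}, and the first inclusion via the norm $\prod_{\sigma}\sigma^*\eta \in L_\Base^{nm}$ after a reduction to the Galois case. Your linear-algebra step is in fact a refinement of the paper's argument: the paper passes from ``$B_\Base(N_{\Qpv/\Base}(\alpha)) = f_{\Proj}(B_\Qpv(\alpha))$ for all $\alpha$'' to the conclusion by ``considering the intersection for all $\alpha$'', which silently requires exactly the implication you make explicit, namely that if for every $\eta$ some conjugate $\sigma^*\eta$ vanishes on $v$, then $\eta$ itself vanishes on $v$. Your argument ($W = \Img \Phi$ is a $G$-stable linear subspace contained in $\bigcup_\sigma\{t_\sigma = 0\}$, hence in a single coordinate hyperplane, hence in all of them, hence zero) is correct, and the caveat you raise is real: over a finite field a vector space can be a finite union of proper subspaces, so the reduction to an infinite base field is genuinely needed; note that the natural tool for it, Proposition \ref{prop:ext}, carries a log-compactifiability hypothesis absent from Proposition \ref{prop:covsbl}, though this is harmless in the paper's applications.

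There is, however, one genuine (if easily repaired) discrepancy with the stated bound. In the non-Galois case you apply the Galois case to $g = f\circ p$, whose degree is $n' = n\deg(p)$, and then push down along $p$; this yields $f_{\Proj}^{-1}(\BL_{n'm}^{\Proj}(\Base)) \subset \PmBL(\Qpv)$, which is strictly weaker than the claimed $f_{\Proj}^{-1}(\BL_{nm}^{\Proj}(\Base)) \subset \PmBL(\Qpv)$, since $\BL_{n'm}^{\Proj}(\Base) \subset \BL_{nm}^{\Proj}(\Base)$ (raise forms of degree $nm$ to the power $n'/n$). To recover the exponent $nm$ you should use the norm of the degree-$n$ extension itself, $N_{\Qpv/\Base}(\eta) = \prod_{\tau \in G/H}\tau^*\eta$ with $H = \Gal(\Qpv'/\Qpv)$ (well defined since $\eta$ is $H$-invariant, $G$-invariant hence in $L_\Base^{nm}$), and run your hyperplane argument in $k(y)^{G/H}$ with the $G$-action on cosets; this is what the paper's $N_{\Qpv/\Base}$ accomplishes. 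Your weaker bound still gives the ``in particular'' equality of stable base loci and would suffice for the applications after minor bookkeeping, but not the proposition as literally stated.
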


\begin{proof}
	 The inclusion $\PmBL(\Qpv) \subset f_{\Proj}^{-1}(\PmBL(\Base))$ is immediate with Proposition \ref{prop:covform}. The same proposition implies that the norm $N_{\Qpv/\Base}(\alpha)$ of an element $\alpha$ of $L_\Qpv^m$ for the extension $\Frac(D_\Qpv^\bullet)/\Frac(D_\Base^\bullet)$ is in $L_\Base^{nm}$. For a smooth $k$-variety $\Prv$, denote by $B_{\Prv}(\beta)$ the vanishing locus of a $\beta \in L_\Prv^\bullet$ in $\Proj \Omega_\Prv^1$. Let $f': \Qpv' \to \Base$ be a Galois finite étale covering map with a morphism of $\Base$-coverings $g: \Qpv' \to \Qpv$. Then $B_{\Qpv'}(\alpha) = g_{\Proj}^{-1}(B_{\Qpv}(\alpha))$. Moreover, the vanishing locus in $\Base$ of the norm $N_{\Qpv'/\Base}(\alpha)$ of $\alpha$ for the extension $\Frac(D_{\Qpv'}^\bullet)/\Frac(D_\Base^\bullet)$ is $f'_{\Proj}(B_{\Qpv'}(\alpha))$ because $N_{\Qpv'/\Base}(\alpha)$ is a product of the conjugates of $\alpha$. Hence $B_\Base(N_{\Qpv'/\Base}(\alpha)) = f_{\Proj}(B_\Qpv(\alpha))$. As $N_{\Qpv'/\Base}(\alpha)$ is a power of $N_{\Qpv/\Base}(\alpha)$, we have $B_\Base(N_{\Qpv/\Base}(\alpha)) = f_{\Proj}(B_\Qpv(\alpha))$. Considering the intersection for all $\alpha \in L_\Qpv^m$, we get $f_{\Proj}^{-1}(\BL_{nm}^{\Proj}(\Base)) \subset \PmBL(\Qpv)$.
\end{proof}

\begin{prop}\label{prop:ext}
	Let $K/k$ be a field extension. Let $\Qpv$ be a log compactifiable smooth $k$-variety. Then for every $m \in \Ns$, $\PmBL(\Qpv \otimes_k K) = \PmBL(\Qpv) \otimes_k K$ and $\PBL(\Qpv \otimes_k K) = \PBL(\Qpv) \otimes_k K$.
\end{prop}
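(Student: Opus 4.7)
The plan is to reduce everything to one flat base change statement. Pick a log compactification $(\Prv, \Div)$ of $\Qpv$; because normal crossings, smoothness and properness are preserved by the field extension, $(\Prv \otimes_k K, \Div \otimes_k K)$ is a log compactification of $\Qpv \otimes_k K$. Via Proposition \ref{prop:abreq}, the two graded algebras of interest become
\[
L_{\Qpv}^\bullet = \bigoplus_m H^0\bigl(\Prv, \Sym^m \Omega_{\Prv}^1(\log \Div)\bigr), \quad L_{\Qpv \otimes_k K}^\bullet = \bigoplus_m H^0\bigl(\Prv \otimes_k K, \Sym^m \Omega_{\Prv \otimes_k K}^1(\log \Div_K)\bigr).
\]
Since $\Prv$ is proper over $k$ and $\Sym^m \Omega_{\Prv}^1(\log \Div)$ is locally free (hence flat) for every $m$, flat base change gives an identification $L_{\Qpv \otimes_k K}^m = L_{\Qpv}^m \otimes_k K$.

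Next I would translate this into a statement about base loci. Recall that, with the notation of the previous subsection, $\PmBL(\Qpv)$ is the intersection of the zero loci in $\Proj \Omega_{\Qpv}^1$ of the sections in $L_{\Qpv}^m$. Choose a $k$-basis $(\omega_i)$ of $L_{\Qpv}^m$; by the previous step it is also a $K$-basis of $L_{\Qpv \otimes_k K}^m$. Since $\Proj \Omega_{\Qpv \otimes_k K}^1 = \Proj \Omega_{\Qpv}^1 \otimes_k K$, and the vanishing locus of a section is cut out by the same equations before and after base change (base change is flat and commutes with formation of zero schemes of sections of line bundles), the intersection of these vanishing loci satisfies
\[
\PmBL(\Qpv \otimes_k K) = \PmBL(\Qpv) \otimes_k K.
\]

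For the stable base locus, I would use that the descending chain of closed subsets $(\PmBL)_{m \in \Ns}$ stabilizes by noetherianity; pick $m_0$ such that both $\PBL(\Qpv) = \mathbb{B}_{m_0!}^{\Proj}(\Qpv)$ and $\PBL(\Qpv \otimes_k K) = \mathbb{B}_{m_0!}^{\Proj}(\Qpv \otimes_k K)$ hold (take a common multiple if necessary). Then the first identity applied at this single level gives $\PBL(\Qpv \otimes_k K) = \PBL(\Qpv) \otimes_k K$.

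No step here is really an obstacle: the only nontrivial input is flat base change for coherent cohomology of a proper $k$-scheme, and the local freeness of $\Sym^m \Omega_{\Prv}^1(\log \Div)$ which makes that base change tautological. The remainder is bookkeeping about how base loci behave under base change of the projective bundle.
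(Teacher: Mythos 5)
Your argument is correct and is essentially the paper's own proof: the paper simply cites the flat base change statement $H^0(\Prv_K,\mathcal F_K)=H^0(\Prv,\mathcal F)\otimes_k K$ for a field extension (Stacks Project Tag 02KH), which is exactly the key step you spell out, and the rest (compatibility of vanishing loci with base change, stabilization of the chain for $\PBL$) is the same bookkeeping. No gap.
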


\begin{proof}
	Immediate consequence of \cite[\href{https://stacks.math.columbia.edu/tag/02KH}{Tag 02KH}]{stacks-project}.
\end{proof}

\begin{prop}\label{prop:spe}
	Let $\Prv$ be a smooth and proper scheme over a non empty irreducible scheme $S$ with an étale morphism $S \to \Spec(\Z)$. Let $\Qpv$ be the complement of a normal crossing divisor $\Div$ in $\Prv$. Let $\eta \in S$ be the generic point. Let $m \in \Ns$.
	\begin{itemize}
		\item If $\PmBL(\Qpv_\eta)$ is empty, then for all but finitely many closed points $p \in S$, $\PmBL(\Qpv_p)$ is empty. The same statement holds with $\PBL(\Qpv_\eta)$ instead of $\PmBL(\Qpv_\eta)$ and $\PBL(\Qpv_p)$ instead of $\PmBL(\Qpv_p)$.
		\item If $\PmBL(\Qpv_\eta)$ is linearly non-degenerate, then for all but finitely many (depending on $m$) closed points $p \in S$, $\PmBL(\Qpv_p)$ is linearly non-degenerate.
	\end{itemize}
\end{prop}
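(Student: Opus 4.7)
The plan is to work relatively over $S$ and exploit the one-dimensionality of $S$ (étale over $\Spec\Z$, hence essentially an open in the spectrum of a ring of integers). Write $f: \Prv \to S$ for the structure morphism, $\pi: Y := \Proj \Omega_{\Prv/S}^1(\log \Div) \to \Prv$ for the relative projective bundle with tautological line bundle $\mathcal{O}_Y(1)$, and $\mathcal{F}_m := \pi_*\mathcal{O}_Y(m) = \Sym^m \Omega_{\Prv/S}^1(\log \Div)$, which is locally free on $\Prv$. The pushforward $f_*\mathcal{F}_m$ is coherent on $S$; by generic flatness and Grothendieck's cohomology and base change, after removing finitely many closed points of $S$ we may assume that $f_*\mathcal{F}_m$ is locally free and that its formation commutes with base change. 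Proposition~\ref{prop:abreq} then identifies $(f_*\mathcal{F}_m)\otimes\kappa(p) \cong H^0(\Prv_p, \mathcal{F}_m|_{\Prv_p}) \cong L_{\Qpv_p}^m$ for every $p\in S$. Let $\mathcal{B}\subset Y$ be the closed subset where the evaluation map $(f\circ\pi)^*f_*\mathcal{F}_m \to \mathcal{O}_Y(m)$ fails to be surjective; after this shrinking, $\mathcal{B}_p \cap \pi^{-1}(\Qpv_p) = \PmBL(\Qpv_p)$ for each $p$.

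For the emptiness statement, assume $\PmBL(\Qpv_\eta) = \emptyset$. Then $\pi(\mathcal{B})\cap\Qpv$, locally closed in $\Prv$ because $\pi$ is projective, has empty fiber over $\eta$. By Chevalley, its image under $f$ is a constructible subset of $S$ missing the generic point and hence, since $S$ is one-dimensional and irreducible, contained in a finite set of closed points. For $p$ outside this set, $\PmBL(\Qpv_p)=\emptyset$. For the $\PBL$ variant, apply the above to some $m$ for which $\PBL(\Qpv_\eta) = \PmBL(\Qpv_\eta)$ (such an $m$ exists as noted earlier in the text) and use $\PBL(\Qpv_p)\subset\PmBL(\Qpv_p)$.

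For the linear non-degeneracy statement, let $\tau := \pi|_{\pi^{-1}(\Qpv)}$ and consider the ideal sheaf $\mathcal{I}_\mathcal{B}$ of $\mathcal{B}$ together with the coherent sheaf $\mathcal{V} := \tau_*(\mathcal{I}_\mathcal{B}(1))$ on $\Qpv$. The hardest step is base-change compatibility: after a further shrinking of $S$ and, if needed, a generic flattening of $\mathcal{B}$ over $\Qpv$, Grothendieck's cohomology and base change ensures that for every $p$ and every closed point $x\in\Qpv_p$, the fiber of $\mathcal{V}$ at $x$ identifies with $H^0(\pi^{-1}(x), \mathcal{I}_{(\mathcal{B}_p)_x}(1))$, i.e., with the space of linear forms in $\Omega_{\Qpv_p,x}^1$ vanishing on $\PmBL(\Qpv_p)_x$. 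Consequently $\Delta := \mathrm{Supp}(\mathcal{V})\subset\Qpv$ is a closed subset whose fiber $\Delta_p$ equals the linear degeneracy locus of $\PmBL(\Qpv_p)$. By hypothesis no closed point of $\Qpv_\eta$ lies in $\Delta_\eta$; since $\Qpv_\eta$ is of finite type over a field and thus Jacobson with dense closed points, $\Delta_\eta$ is empty. As in the first part, $f(\Delta)$ is then a constructible subset of $S$ avoiding $\eta$, hence finite, and $\PmBL(\Qpv_p)$ is linearly non-degenerate for every $p$ outside this finite set.
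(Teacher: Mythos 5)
Your treatment of the first bullet (emptiness) is correct and is essentially the paper's argument: both define the same relative base locus ($Z$ in the paper, your $\mathcal{B}$) via Grauert/cohomology-and-base-change after shrinking $S$, and both conclude with Chevalley applied to its image in $S$.

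The second bullet, however, has a genuine gap at exactly the step you call the hardest. You need the fiber of $\mathcal{V}=\tau_*(\mathcal{I}_{\mathcal{B}}(1))$ at a closed point $x\in\Qpv_p$ to compute $H^0(\pi^{-1}(x),\mathcal{I}_{(\mathcal{B}_p)_x}(1))$, but (i) cohomology and base change requires the sheaf to be flat over $\Qpv$, and $\mathcal{I}_{\mathcal{B}}(1)$ is not flat where the fibers of $\mathcal{B}$ jump; a ``generic flattening of $\mathcal{B}$ over $\Qpv$'' only controls a dense open of $\Qpv$, whereas you need the identification at \emph{every} closed point of \emph{every} fiber $\Qpv_p$; (ii) even where base change holds, the restriction $\mathcal{I}_{\mathcal{B}}\otimes\kappa(x)\to\mathcal{O}_{\pi^{-1}(x)}$ need not be injective, so $(\mathcal{I}_{\mathcal{B}}(1))|_{\pi^{-1}(x)}$ is not $\mathcal{I}_{\mathcal{B}_x}(1)$. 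The conclusion you draw cannot be repaired as stated: $\mathrm{Supp}(\mathcal{V})$ is closed, but the linear degeneracy locus of a family of closed subsets is in general \emph{not} closed (take $\mathcal{B}\subset\mathbb{A}^1\times\Proj^1$ the union of $\{0\}\times\Proj^1$ with a constant section: the degenerate locus is $\mathbb{A}^1\setminus\{0\}$). So $\Delta_p$ cannot in general equal the degeneracy locus, and without the base-change identification there is no containment in either direction, so you cannot even conclude $\Delta_\eta=\emptyset$ from the hypothesis or finiteness of $f$ applied to the true degeneracy locus.

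The paper avoids sheaf pushforwards here entirely: it forms the $\di$-fold fiber product $P=\prod_{i=1}^{\di}\Proj\Omega^1_{\Qpv}$ over $\Qpv$ ($\di$ the relative dimension), intersects $Z^{\di}$ with the open locus $C$ of spanning $\di$-tuples, and takes the image $Q$ in $\Qpv$; then $\Qpv\setminus Q$ is exactly the degeneracy locus, and Chevalley gives constructibility of $Q$ and of $b(\Qpv\setminus Q)$, which is all that is needed (closedness is never used, only constructibility plus the Jacobson argument you already invoke). Replacing your $\Delta$ by this construction fixes the proof.
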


\begin{proof}
	Consider the vector bundle $\Sym^m \Omega_{\Prv}^1(\log \Div)$ on $\Prv$. Let $b$ be the morphism $\Prv \to S$. Thanks to Grauert theorem, up to shrink $S$, $b_*\Sym^m\Omega_{\Prv}^1(\log \Div)$ is free and for every $s \in S$, $b_*\Sym^m\Omega_{\Prv}^1(\log \Div)_s = H^0(\Prv_s, \Sym^m\Omega_{\Prv}^1(\log \Div)_{\vert \Prv_s}) = H^0(\Prv_s, \Sym^m\Omega_{\Prv_s}^1(\log \Div_s))$. In particular, if we denote by $Z$ the vanishing locus of $H^0(S, b_*\Sym^m\Omega_{\Prv}^1(\log \Div))$ in $\Proj \Omega_{\Qpv}^1$, then for every $s \in S$, $Z_s = \PmBL(\Qpv_s)$.
	
	Suppose $\PmBL(\Qpv_\eta) = \emptyset$, hence $Z$ is disjoint of the fiber at $\eta$. By definition $Z$ is closed in $\Proj \Omega_{\Qpv}^1$, hence by Chevalley theorem, $b(Z)$ is constructible. As it does not contain the generic point of $S$, it is a finite set of closed points. For the case of $\PBL$, notice that it is empty if and only if one of the $\PmBL$ is empty.
	
	Suppose $\PmBL(\Qpv_\eta)$ is linearly non-degenerate. Let $\di$ be the relative dimension of $\Prv$ over $S$. Consider $P$ the fiber product over $\Qpv$: $P := \prod_{i=1}^{\di} \Proj \Omega_{\Qpv}^1$. Let $C \subset P$ be the open subset of maximal rank $\di$-uples of lines. Let $Z^\di := \prod_{i=1}^{\di} Z \subset P$: then $Z^\di \cap C$ is a locally closed subset of $P$, let $Q$ be its projection on $\Qpv$. Chevalley theorem gives that $Q$ is constructible, hence that $b(\Qpv \setminus Q)$ is constructible. By assumption, it does not contain the generic point of $S$. Hence it is a finite set of closed points.
\end{proof}

\subsection{Augmented base locus of a line bundle}

In this subsection, we suppose $k = \C$. As the characteristic of $\C$ is zero, every (quasiprojective) smooth complex algebraic variety $\Qpv$ is (projectively) log compactifiable. Moreover, if $f: \Qpv \to \Qpv'$ is a morphism between two quasiprojective smooth complex algebraic varieties, then there exists a projective log compactification $\Prv$ (resp. $\Prv'$) of $\Qpv$ (resp. $\Qpv'$) such that $f$ extends as a morphism $\Prv \to \Prv'$.

\bigskip

The augmented base locus was first defined in \cite{ein2006asymptotic}, in continuation of \cite{nakamaye2000stable}. Let $\LB$ and $\Amp$ be line bundles over a projective smooth complex algebraic variety $\Prv$ with $\Amp$ ample. Then, one can easily check that $\bigcap_{m \in \Ns} \StBL(\LB^m \otimes \Amp^{-1})$ does not depend on the choice of $\Amp$. So we can define:

\begin{defi}
	The \emph{augmented base locus} of $\LB$ is:
	\[
	\AugBL(\VB) := \bigcap_{m \in \Ns} \StBL(\LB^m \otimes \Amp^{-1})
	\]
	where $\Amp$ is any ample line bundle.
\end{defi}

The augmented base locus of a nef line bundle can be determined by the computation of its exceptional locus.

\begin{defi}
	Let $\LB$ be a nef line bundle over $\Prv$. Its \emph{exceptional locus}, denoted by $\Exc(\LB)$, is the union of the irreducible closed subsets $Z \subset V$ of positive dimension such that the intersection number $\LB^{\dim Z} \cdot Z$ is $0$.
\end{defi}

Then, the augmented base locus can be computed thanks to the following theorem.

\begin{thm}\cite[Theorem 1.1]{nakamaye2000stable}\label{thm:cas}
	If $\LB$ is nef, then $\AugBL(\LB) = \Exc(\LB)$.
\end{thm}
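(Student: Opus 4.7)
The plan is to prove the two inclusions $\Exc(L) \subseteq \AugBL(L)$ and $\AugBL(L) \subseteq \Exc(L)$ separately. For both, I will use the characterization
\[
\AugBL(L) = \bigcap \bigl\{ \mathrm{Supp}(E) : L \equiv_\Q A + E,\ A \text{ ample},\ E \text{ effective } \Q\text{-divisor} \bigr\},
\]
which follows from the definition together with Kodaira's lemma.

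For the inclusion $\Exc(L) \subseteq \AugBL(L)$, I take an irreducible subvariety $Z \subseteq \Prv$ of dimension $d \geq 1$ with $L^d \cdot Z = 0$, and argue by contradiction: suppose $Z \not\subseteq \AugBL(L)$, so there is a decomposition $L \equiv_\Q A + E$ with $A$ ample, $E$ effective and $Z \not\subseteq \mathrm{Supp}(E)$. Then $E \cdot Z$ is a well-defined effective $(d-1)$-cycle, and since $L$ is nef the intersection number $L^{d-1} \cdot E \cdot Z$ is nonnegative. Expanding
\[
L^d \cdot Z = L^{d-1} \cdot A \cdot Z + L^{d-1} \cdot E \cdot Z \geq L^{d-1} \cdot A \cdot Z,
\]
and iterating the same argument with $L = A + E$ replacing successive factors of $L$ (using nefness of $L$ and $A$ to control the cross terms), one obtains the chain of inequalities $L^d \cdot Z \geq L^{d-k} \cdot A^k \cdot Z$ for $k = 1, \dots, d$, hence $L^d \cdot Z \geq A^d \cdot Z > 0$, contradicting $L^d \cdot Z = 0$.

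For the harder inclusion $\AugBL(L) \subseteq \Exc(L)$, I would show the contrapositive: if $x \in \Prv$ is a point lying on no positive-dimensional $Z$ with $L^{\dim Z} \cdot Z = 0$, then $x \notin \AugBL(L)$, i.e., there exists $m \in \Ns$ and an ample divisor $\Amp$ such that $mL - \Amp$ has a global section not vanishing at $x$. The natural strategy is to blow up a neighborhood of $x$, say $\pi : \tilde{\Prv} \to \Prv$ with exceptional divisor $F$, and to study $\pi^*(mL) - c F$ for a suitable $c > 0$. Using Kawamata--Viehweg vanishing and the multiplier-ideal/Nadel-type machinery, one wants to produce, for $m$ large, a section of $mL$ vanishing to sufficiently high order along a proper subscheme containing $x$ but not on all of some neighborhood of $x$; subtracting an ample twist then gives the desired section of $mL - \Amp$. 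The key positivity input is an asymptotic Riemann--Roch / restricted volume estimate: the positivity of the restricted volume $\mathrm{vol}_{\Prv | x}(L)$ at $x$, which must be derived from the assumption $L^{\dim Z} \cdot Z > 0$ for every $Z \ni x$ by an inductive cutting-down argument over the dimension of subvarieties through $x$.

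The main obstacle is this second inclusion, and specifically the passage from the numerical positivity condition ``$L^{\dim Z} \cdot Z > 0$ for all $Z \ni x$'' to the existence of a section of $mL - \Amp$ not vanishing at $x$. The induction on dimension requires handling centers of the constructed multiplier ideals that may be positive-dimensional subvarieties through $x$; one has to arrange, via a generic choice of auxiliary divisors, that these centers behave well enough to apply the induction hypothesis, without falling into a subvariety on which $L$ degenerates. This is precisely where Nakamaye's original Seshadri-type argument and its later reformulations via restricted volumes (Ein--Lazarsfeld--Musta\c{t}\u{a}--Nakamaye--Popa) do the delicate work.
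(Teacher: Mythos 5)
The paper offers no proof of this statement: it is quoted directly from Nakamaye, so there is no internal argument to compare against, and your attempt has to stand on its own. It establishes only the easy half. The argument for $\Exc(\LB) \subseteq \AugBL(\LB)$ is correct: the characterization of $\AugBL(\LB)$ as the intersection of the supports of the effective parts $E$ in decompositions $\LB \equiv_{\Q} \Amp + E$ does follow from the definition via Kodaira's lemma (with the convention that the intersection over the empty family is all of $\Prv$ when $\LB$ is not big, in which case $\LB^{\dim \Prv} = 0$, so $\Exc(\LB) = \Prv$ as well and there is nothing to prove), and the chain $\LB^{d}\cdot Z \geq \LB^{d-1}\cdot \Amp\cdot Z \geq \dots \geq \Amp^{d}\cdot Z > 0$ is valid because at each step you replace one factor $\LB$ by $\Amp + E$ and discard a term of the form (product of nef classes)$\,\cdot E\cdot Z \geq 0$, which is legitimate since $Z \not\subseteq \mathrm{Supp}(E)$.

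The inclusion $\AugBL(\LB) \subseteq \Exc(\LB)$, however, is not proved: what you give is a roadmap, and you yourself locate the point where it stops. The passage from ``$\LB^{\dim Z}\cdot Z > 0$ for every positive-dimensional $Z$ through $x$'' to the existence, for some $m$, of a section of $m\LB - \Amp$ not vanishing at $x$ \emph{is} the content of Nakamaye's theorem; everything before it is the trivial direction. Concretely, the missing step is the induction itself: one must show that the vanishing centers of the multiplier ideals (equivalently, the components of the base locus of $\pi^{*}(m\LB) - cF$ on the blow-up) that obstruct the construction of sections are forced to be subvarieties $Z$ with $\LB^{\dim Z}\cdot Z = 0$, and this requires Nakamaye's uniform Seshadri-type estimates or the restricted-volume machinery of Ein--Lazarsfeld--Musta\c{t}\u{a}--Nakamaye--Popa. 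As written, your argument is circular at this point: you say the positivity of the restricted volume at $x$ ``must be derived'' from the numerical hypothesis by cutting down dimensions, but that derivation is precisely the theorem to be proved. Since the paper uses this result as a black box, the honest options are to cite Nakamaye for this inclusion, as the paper does, or to write out the full inductive argument; the sketch does not close the gap.
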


We also have a the following useful characterization of $\AugBL$.

\begin{thm}\cite{boucksom2014augmented}\label{thm:bcl}
	\begin{enumerate}
		\item The complement of $\AugBL(\LB)$ in $\Prv$ is the maximum for inclusion among Zariski open subsets $U$ of $\Prv \setminus \StBL(\LB)$ such that for $k \in \Ns$ divisible enough, the $k$-th Kodaira map defined by $\LB$ is unramified on $U$.
		\item In the previous statement, we can replace ``unramified'' by ``an isomorphism on its image''.
	\end{enumerate}
\end{thm}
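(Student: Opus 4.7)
The plan is to prove the two inclusions defining the maximality assertion separately. Call a Zariski open $U \subseteq \Prv \setminus \StBL(\LB)$ \emph{admissible} if there exists $k_0 \in \Ns$ such that the $k$-th Kodaira map $\phi_k$ of $\LB$ is unramified on $U$ for every $k \in k_0 \Ns$. The goal is to show that $\Prv \setminus \AugBL(\LB)$ is the maximum among admissible opens, and likewise when ``unramified'' is replaced by ``an isomorphism on its image.''

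For the inclusion $\Prv \setminus \AugBL(\LB) \subseteq \{\text{admissible opens}\}$, I would argue pointwise. Fix an ample $\Amp$; by definition $\AugBL(\LB) = \bigcap_{m \in \Ns} \StBL(\LB^{\otimes m} \otimes \Amp^{-1})$. For $x \notin \AugBL(\LB)$ there exist $m, p \in \Ns$ with $\Amp^{\otimes p}$ very ample and a section $\sigma \in H^0(\Prv, \LB^{\otimes mp} \otimes \Amp^{-p})$ not vanishing at $x$. Multiplication by $\sigma$ embeds $H^0(\Prv, \Amp^{\otimes p})$ into $H^0(\Prv, \LB^{\otimes mp})$, and on $\{\sigma \neq 0\}$ these sections inherit the point- and tangent-separating properties of $\Amp^{\otimes p}$. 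So $\phi_{mp}$ is a local isomorphism onto its image near $x$, and a product-of-sections argument preserves this under passing to multiples of $mp$. Noetherianity of $\Prv$ then produces a single $k_0$ working uniformly on $\Prv \setminus \AugBL(\LB)$.

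The reverse inclusion -- every admissible $U$ is contained in $\Prv \setminus \AugBL(\LB)$ -- is the heart of the matter and is where I expect the main obstacle to lie. Given $x \in U$, I need a section of $\LB^{\otimes m} \otimes \Amp^{-1}$ non-vanishing at $x$ for some $m \in \Ns$. A direct strategy is to choose a general divisor $D \in |\Amp|$ avoiding $x$ and produce a section of $\LB^{\otimes k}$ vanishing on $D$ but not at $x$, which via $\Stru_\Prv(-D) \cong \Amp^{-1}$ yields what is needed. The obstacle is that admissibility controls $\phi_k$ only on $U$, while $D$ necessarily meets $\Prv \setminus U$, where $\phi_k$ may contract subvarieties or accumulate base points -- so ``embedding on $U$'' is not enough to separate $x$ from $D$ by $\LB^{\otimes k}$. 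The cleanest route seems to be via the moving Seshadri constant $\epsilon(\lVert \LB \rVert; x)$ of \cite{ein2006asymptotic, boucksom2014augmented}: the equivalence $x \notin \AugBL(\LB) \iff \epsilon(\lVert \LB \rVert; x) > 0$ is known, and positivity of the moving Seshadri constant can be extracted from asymptotic jet-separation at $x$ by $\LB^{\otimes k}$, which unramifiedness on a neighborhood of $x$ in $U$ supplies. The technical crux is converting the qualitative statement ``$\phi_k$ unramified on $U$'' into the quantitative lower bound $\epsilon(\lVert \LB \rVert; x) > 0$.

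For the equivalence between ``unramified'' and ``isomorphism on its image'' in part (2), one direction is automatic. For the other, once the preceding inclusions are established, distinct points $x, y \in U$ can be separated by raising $k$: the sections of $\LB^{\otimes k} \otimes \Amp^{-1}$ non-vanishing at $x$ (and independently at $y$) obtained from the reverse inclusion, combined with the very ampleness of a sufficiently large power of $\Amp$, produce sections of $\LB^{\otimes k}$ vanishing at $y$ but not at $x$ for $k$ divisible enough, yielding global injectivity of $\phi_k$ on $U$ and hence the embedding statement.
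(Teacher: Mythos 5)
The paper offers no proof of this statement---it is quoted directly from \cite{boucksom2014augmented}---so your proposal can only be assessed on its own terms. The first inclusion (that $\Prv \setminus \AugBL(\LB)$ is itself one of the admissible open sets) is essentially right: multiplying $H^0(\Prv, \Amp^{\otimes q})$ by a section of $\LB^{\otimes mq} \otimes \Amp^{-q}$ non-vanishing at a given point, and then using quasi-compactness of $\Prv \setminus \AugBL(\LB)$ to extract a uniform $k_0$, is the standard argument, and it yields the stronger ``isomorphism onto its image'' version of part (2) at the same time, since on $\{\sigma \neq 0\}$ the Kodaira map of $\LB^{\otimes mq}$ refines the very-ample embedding by $\Amp^{\otimes q}$.

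The genuine gap is the maximality direction, which you flag as ``the technical crux'' but do not close. Your route requires two inputs: (i) $x \notin \AugBL(\LB)$ if and only if the moving Seshadri constant $\epsilon(\lVert \LB \rVert; x)$ is positive, and (ii) $\epsilon(\lVert \LB \rVert; x) = \lim_k s(\LB^{\otimes k}; x)/k$, where $s$ denotes jet separation; given (ii), unramifiedness of $\phi_{k_0}$ at $x \in U$ (together with $x \notin \StBL(\LB)$) gives $s(\LB^{\otimes k_0}; x) \geq 1$, hence $\epsilon(\lVert \LB \rVert; x) \geq 1/k_0 > 0$ by superadditivity of $s$. But (i) is essentially the definition of the moving Seshadri constant in \cite{ein2006asymptotic} (it is set to $0$ on $\AugBL(\LB)$ and defined as a supremum of genuine Seshadri constants off it), so all the content sits in the hard direction of (ii): producing, from $1$-jet generation at $x$, an actual decomposition $k\LB = A + E$ with $A$ ample, $E$ effective and $x \notin \mathrm{Supp}(E)$. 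That implication is Theorem 6.2 of Ein--Lazarsfeld--Musta\c{t}\u{a}--Nakamaye--Popa and is of the same depth as the statement you are proving; invoking it relabels the difficulty rather than resolving it. To have a proof you would need to carry out that Nakamaye-type extraction (or the restricted-volume argument of \cite{boucksom2014augmented}) from the hypothesis that $\phi_k$ is unramified on $U$ for all sufficiently divisible $k$.
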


The characterization \ref{thm:bcl}.2 is the form under which Mok mentioned the augmented base locus in \cite[Proposition 4, p.262]{mok1989metric}.

\begin{cor}\label{cor:augsimple}
	For $m \in \Ns$ divisible enough, for every $x \in \Prv \setminus \StBL(\LB)$, we have $x \notin \AugBL(\LB)$ if and only if for every $v \in T_{\Prv, x}$, there exists a global section $\sigma$ of $\LB^{\otimes k}$ which vanishes at $x$ but not on $v$.
\end{cor}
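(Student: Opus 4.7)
The plan is a direct application of Theorem \ref{thm:bcl}.1. By that theorem, $\Prv \setminus \AugBL(\LB)$ coincides with the largest Zariski open subset $U$ of $\Prv \setminus \StBL(\LB)$ on which, for $m \in \Ns$ divisible enough, the Kodaira map $\varphi_m$ attached to $\lvert \LB^{\otimes m} \rvert$ is unramified. It therefore suffices to rewrite unramifiedness of $\varphi_m$ at a point $x \in \Prv \setminus \StBL(\LB)$ in terms of the global sections of $\LB^{\otimes m}$.

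Fix $m$ divisible enough; in particular $x \notin \StBL(\LB^{\otimes m})$, so one can choose a basis $(\sigma_0, \dots, \sigma_N)$ of $H^0(\Prv, \LB^{\otimes m})$ with $\sigma_0(x) \neq 0$. In a local trivialization of $\LB^{\otimes m}$ around $x$, and in the affine chart of $\mathbb{P}^N$ where the coordinate dual to $\sigma_0$ is nonzero, $\varphi_m$ is the rational map $y \mapsto (\sigma_j(y)/\sigma_0(y))_{1 \le j \le N}$. Its differential at $x$ sends $v \in T_{\Prv, x}$ to $(d(\sigma_j/\sigma_0)_x(v))_j$, so $\varphi_m$ is unramified at $x$ if and only if this tuple of linear forms on $T_{\Prv, x}$ separates nonzero tangent vectors.

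For any $\sigma \in H^0(\Prv, \LB^{\otimes m})$ vanishing at $x$, the derivative $d\sigma_x : T_{\Prv, x} \to \LB^{\otimes m}_x$ is intrinsically defined (independent of the trivialization) and, in the chosen trivialization, corresponds to $\sigma_0(x) \cdot d(\sigma/\sigma_0)_x$. Since the sections $\tau_j := \sigma_j - (\sigma_j(x)/\sigma_0(x))\,\sigma_0$, $1 \le j \le N$, span the hyperplane of global sections of $\LB^{\otimes m}$ vanishing at $x$, the separation property above is equivalent to the claim that for every nonzero $v \in T_{\Prv, x}$ there exists $\sigma \in H^0(\Prv, \LB^{\otimes m})$ with $\sigma(x) = 0$ and $d\sigma_x(v) \neq 0$. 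This is precisely the statement of the corollary, once ``$\sigma$ does not vanish on $v$'' is read as ``$d\sigma_x(v) \neq 0$''. The proof presents no real obstacle: all the substantive content sits in Theorem \ref{thm:bcl}.1, and what remains is the routine bookkeeping above identifying the differential of $\varphi_m$ with the intrinsic derivatives of sections.
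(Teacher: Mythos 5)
Your proof is correct and is exactly the argument the paper intends: the paper states this as an unproved corollary of Theorem \ref{thm:bcl}.1, and your translation of unramifiedness of the Kodaira map at $x$ into the separation of tangent vectors by sections vanishing at $x$ (together with openness of the unramified locus and the maximality in Theorem \ref{thm:bcl}.1 for the converse direction) is the standard and intended route.
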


\subsection{Augmented base locus of the logarithmic cotangent bundle}

In this subsection, we suppose $k = \C$.

\begin{defi}
	Let $\Qpv$ be a quasiprojective smooth complex algebraic variety. Let $(\Prv, \Div)$ be a projective log compactification of $\Qpv$. We denote by $\PABL(\Qpv) \subset \Proj \Omega_{\Prv}^1$ the intersection of the augmented base locus of $\Taut$ with $\Proj \Omega_{\Prv}^1$.
\end{defi}

As the Kodaira maps associated with $H^0(\Prv, \Sym^m \Omega_{\Prv}^1(\log \Div)) = L_\Qpv^m$ on $\Proj \Omega_{\Qpv}^1$ do not depend on the choice of the compactification, Theorem \ref{thm:bcl} gives us the fact that $\PABL(\Qpv)$ does not depend on the compactification either. Moreover, Propositions \ref{prop:funct}, \ref{prop:functform} and Corollary \ref{cor:augsimple} give the following functionality property.

\begin{prop}\label{prop:functaug}
	Let $\Qpv$ and $\Qpv'$ be quasiprojective smooth complex algebraic varieties, let $f : \Qpv \to \Qpv'$ be a morphism. Let $R \subset \Qpv$ be the ramification locus of $f$. Let $f_{\Proj}: \Proj \Omega_{\Qpv \setminus R}^1 \to \Proj \Omega_{\Qpv'}^1$ be the morphism induced by $f$. Then $f_{\Proj}(\PABL(\Qpv) \cap \Proj \Omega_{\Qpv \setminus R}^1) \subset \PABL(\Qpv')$.
\end{prop}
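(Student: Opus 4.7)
The plan is to argue by contraposition using Corollary~\ref{cor:augsimple}. First I would fix projective log compactifications $(\Prv, \Div)$ of $\Qpv$ and $(\Prv', \Div')$ of $\Qpv'$ such that $f$ extends to a morphism $\Prv \to \Prv'$, and set $X := \Proj \Omega_\Prv^1(\log \Div)$ and $Y := \Proj \Omega_{\Prv'}^1(\log \Div')$, so that $f_{\Proj}$ becomes a rational map $X \dashrightarrow Y$, regular on $\Proj \Omega_{\Qpv \setminus R}^1$. Given $[v] \in \Proj \Omega_{\Qpv \setminus R}^1$ lying above $x \in \Qpv \setminus R$, I set $[v'] := f_{\Proj}([v])$, assume $[v'] \notin \PABL(\Qpv')$, and aim to show $[v] \notin \PABL(\Qpv)$.

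Applying Corollary~\ref{cor:augsimple} at $[v'] \in Y$ gives, for some $m \in \Ns$ divisible enough, a section $\sigma'_0 \in H^0(Y, \Taut^{\otimes m}) = L_{\Qpv'}^m$ not vanishing at $[v']$, and for each $w' \in T_{[v']}Y$ a section $\sigma'_{w'} \in L_{\Qpv'}^m$ which vanishes at $[v']$ but whose derivative in the direction $w'$ is nonzero. Propositions~\ref{prop:functform} and~\ref{prop:abreq} let me pull these back to $f^*\sigma'_{w'} \in L_\Qpv^m = H^0(X, \Taut^{\otimes m})$. The pointwise identity $(f^*\omega)_x(v^{\otimes m}) = \omega_{f(x)}((df_x v)^{\otimes m})$ identifies the vanishing of the pulled-back section at $[v]$ with that of the original section at $[v']$, and the chain rule forces the derivative of $f^*\sigma'_{w'}$ at $[v]$ to factor through $df_{\Proj}$.

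The technical heart is checking that $df_{\Proj}\colon T_{[v]}X \to T_{[v']}Y$ is injective. Since $x \notin \Div$ and $f(x) \notin \Div'$ the logarithmic subtleties disappear at $[v]$ and $[v']$, and the two tangent spaces fit into the usual short exact sequence of the projective bundle, with horizontal parts $T_x\Qpv$ and $T_{f(x)}\Qpv'$ and vertical parts $\Hom(kv, T_x\Qpv/kv)$ and $\Hom(kv', T_{f(x)}\Qpv'/kv')$. The hypothesis $x \notin R$ provides an injection $df_x\colon T_x\Qpv \hookrightarrow T_{f(x)}\Qpv'$, and since $df_x(v) = v' \neq 0$ this descends to an injection $T_x\Qpv/kv \hookrightarrow T_{f(x)}\Qpv'/kv'$; both the horizontal and vertical pieces of $df_{\Proj}$ are therefore injective.

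With that in hand, for every nonzero $w \in T_{[v]}X$ the image $w' := df_{\Proj}(w)$ is nonzero, so $f^*\sigma'_{w'}$ vanishes at $[v]$ with nonzero derivative in the direction $w$, while $f^*\sigma'_0$ does not vanish at $[v]$. The ``if'' direction of Corollary~\ref{cor:augsimple} then yields $[v] \notin \AugBL(\Taut)$, i.e.\ $[v] \notin \PABL(\Qpv)$, completing the contrapositive. The main obstacle is the third step, where some care is needed with the short exact sequence of the projective bundle and with the compatibility between the logarithmic and ordinary cotangent bundles along the open stratum; however, beyond the hypothesis $x \notin R$ no additional geometric input is required.
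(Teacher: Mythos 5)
Your argument is correct and follows exactly the route the paper intends: the paper gives no written proof of Proposition \ref{prop:functaug}, merely asserting that it follows from Propositions \ref{prop:funct}, \ref{prop:functform} and Corollary \ref{cor:augsimple}, and your proposal is precisely the fleshing-out of that derivation (pull back the sections furnished by Corollary \ref{cor:augsimple} via Propositions \ref{prop:functform} and \ref{prop:abreq}, and use $x \notin R$ to get injectivity of $\dif f_{\Proj}$ on both the vertical and horizontal pieces of the tangent space of the projectivized bundle). No gap.
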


\begin{prop}\label{prop:covaug}
	Let $f: \Qpv \to \Base$ be a finite étale covering map between quasiprojective smooth complex algebraic varieties. Let $f_{\Proj}$ be the map $\Proj \Omega_{\Qpv}^1 \to \Proj \Omega_{\Base}^1$ induced by $f$. Let $m \in \Ns$. Then $\PABL(\Qpv) = f_{\Proj}^{-1}(\PABL(\Base))$.
\end{prop}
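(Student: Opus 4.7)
The plan is to prove the two inclusions separately; the key input is that $f$ being étale forces the induced map $f_\Proj : \Proj\Omega_\Qpv^1 \to \Proj\Omega_\Base^1$ to be étale as well, so its differential is an isomorphism at every point.

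The inclusion $\PABL(\Qpv) \subset f_\Proj^{-1}(\PABL(\Base))$ is immediate from Proposition~\ref{prop:functaug}: since $f$ is étale its ramification locus is empty, and the proposition gives $f_\Proj(\PABL(\Qpv)) \subset \PABL(\Base)$ directly.

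For the reverse inclusion $f_\Proj^{-1}(\PABL(\Base)) \subset \PABL(\Qpv)$, the idea is to use Corollary~\ref{cor:augsimple} twice: once on $\Base$ to extract a logarithmic symmetric form separating a chosen pair $([w], \xi')$, and once on $\Qpv$ to conclude from the pulled-back form that it separates the corresponding pair $([v], \xi)$. Concretely, fix $[v] \in \Proj \Omega_\Qpv^1$ whose image $[w] := f_\Proj([v])$ avoids $\PABL(\Base)$. Proposition~\ref{prop:covsbl} gives $\PBL(\Qpv) = f_\Proj^{-1}(\PBL(\Base))$, and since $\PBL(\Base) \subset \PABL(\Base)$ I obtain $[v] \notin \PBL(\Qpv)$, so $[v]$ lies outside the stable base locus of $\Taut$ and Corollary~\ref{cor:augsimple} applies at $[v]$. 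I pick compatible projective log compactifications $(\Prv_\Qpv, \Div_\Qpv)$ and $(\Prv_\Base, \Div_\Base)$. For any nonzero tangent vector $\xi$ at $[v]$---whose ambient tangent space agrees with $T_{[v]}\Proj\Omega_\Qpv^1$ since $[v]$ lies in the open part---I set $\xi' := df_\Proj(\xi) \neq 0$ and apply Corollary~\ref{cor:augsimple} at $[w]$ to find $k \in \Ns$ and $\sigma \in L_\Base^k$ whose associated section $\tilde\sigma$ of $\Taut^{\otimes k}$ vanishes at $[w]$ with $d\tilde\sigma(\xi') \neq 0$. By Proposition~\ref{prop:functform}, $\tau := f^*\sigma$ lies in $L_\Qpv^k$, and its associated section $\tilde\tau$ restricts on $\Proj\Omega_\Qpv^1$ to $f_\Proj^*\tilde\sigma$ via the canonical identification $f_\Proj^*\Taut \simeq \Taut$ coming from the étale isomorphism $f^*\Omega_\Base^1 \simeq \Omega_\Qpv^1$. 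Therefore $\tilde\tau([v]) = 0$ and $d\tilde\tau(\xi) = d\tilde\sigma(\xi') \neq 0$, and Corollary~\ref{cor:augsimple} concludes $[v] \notin \PABL(\Qpv)$.

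The step I expect to require the most care is verifying that the abstract pullback $f^*\sigma \in L_\Qpv^k$ produced by Proposition~\ref{prop:functform}---defined by extending the naive pullback across the logarithmic boundary $\Div_\Qpv$---really does agree, on the open part $\Proj \Omega_\Qpv^1$, with the obvious pullback of $\tilde\sigma$ as a section of the tautological line bundle. This compatibility is what transfers the first-order vanishing data from $([w], \xi')$ to $([v], \xi)$, and it crucially depends on $f$ being étale over $\Qpv$, so that the logarithmic structures play no role there and the tautological bundles are canonically identified.
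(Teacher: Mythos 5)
Your treatment of the inclusion $\PABL(\Qpv) \subset f_\Proj^{-1}(\PABL(\Base))$ is fine, but your argument for what you call the reverse inclusion in fact proves that same forward inclusion a second time, and the genuinely hard half of the proposition is missing. You assume $[w]=f_\Proj([v])\notin\PABL(\Base)$ and deduce $[v]\notin\PABL(\Qpv)$; taking the contrapositive, this shows $[v]\in\PABL(\Qpv)\Rightarrow f_\Proj([v])\in\PABL(\Base)$, which is exactly $\PABL(\Qpv)\subset f_\Proj^{-1}(\PABL(\Base))$, i.e.\ Proposition \ref{prop:functaug} again. The inclusion $f_\Proj^{-1}(\PABL(\Base))\subset\PABL(\Qpv)$ is the statement that $[v]\notin\PABL(\Qpv)$ implies $f_\Proj([v])\notin\PABL(\Base)$: starting from sections over $\Qpv$ that separate tangent vectors at $[v]$, one must manufacture sections over $\Base$ that separate tangent vectors at $f_\Proj([v])$. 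Pullback transports sections in the wrong direction for this; an element of $L_\Qpv^m$ does not descend to $\Base$, so your mechanism cannot produce the required sections downstairs.

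That descent is where all the content of the paper's proof lies. One first reduces to the case where $f$ is Galois; then, given $x\notin\PABL(\Qpv)$ and a tangent vector $v$, one uses Theorem \ref{thm:bcl} and Corollary \ref{cor:augsimple} to choose a single $\sigma\in L_\Qpv^m$ that vanishes at $x$, does not vanish on $v$, \emph{and} does not vanish at any Galois conjugate $x'\neq x$ of $x$; the norm $N_{\Qpv/\Base}(\sigma)\in L_\Base^{nm}$ then still vanishes at $f(x)$ only to first order in the direction $\dif f(v)$, because the conjugate factors of the norm are nonzero at $x$. The separation of $x$ from its conjugates is essential (otherwise the norm could vanish to higher order and kill the derivative), and it is available precisely because $x\notin\PABL(\Qpv)$. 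Without some such descent argument your proof establishes only one of the two inclusions, and that is the one not needed for the reduction step in the proof of Theorem \ref{thm:main}.
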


\begin{proof}
	The inclusion $\PABL(\Qpv) \subset f_{\Proj}^{-1}(\PABL(\Base))$ is given by the previous proposition. To prove the converse, we can suppose that $f$ is Galois. Indeed, in any case, let $f': \Qpv' \to \Base$ be a Galois finite étale covering map with a morphism of $\Base$-coverings $g: \Qpv' \to \Qpv$ and suppose $\PABL(\Qpv') = {f'_{\Proj}}^{-1}(\PABL(\Base))$. Then $f_{\Proj}^{-1}(\PABL(\Base)) = g_{\Proj}({f'_{\Proj}}^{-1}(\PABL(\Base))) = g_{\Proj}(\PABL(\Qpv')) \subset \PABL(\Qpv)$ with the previous inclusion.
	
	Hence, we suppose $f$ Galois. Let $x \in \Proj \Omega_{\Qpv}^1 \setminus \PABL(\Qpv)$ and $v \in T_{\Qpv, x} \cong T_{\Base, f(x)}$. Thanks to Corollary \ref{cor:augsimple} and Proposition \ref{thm:bcl}, there exist $m \in \Ns$, a form in $L_\Qpv^m$ which vanishes at $x$ but not on $v$ and, for each conjugate $x'$ of $x$ under the action of the Galois group, a form in $L_\Qpv^m$ which vanishes at $x$ but not at $x'$. Hence there exists a form $\sigma$ in $L_\Qpv^m$ which realizes all this conditions at the same time. Hence $N_{\Qpv/\Base}(\sigma)$ (notation of the proof of Proposition \ref{prop:covsbl}) vanishes at $x$ but not on $\dif f(v)$. Hence, thanks to Proposition \ref{prop:covsbl} and Corollary \ref{cor:augsimple}, we have $f_{\Proj}^{-1}(\PABL(\Base)) \subset \PABL(\Qpv)$.
\end{proof}

\section{Irreducible bounded symmetric domains}\label{section:lsav}

We start with a recall of some classical facts about bounded symmetric domains, and we set some notations. The proofs of those facts can be found for instance in \cite{helgason2001differential} or in the four first chapters of \cite{mok1989metric}.

\bigskip

Let $\di \in \N$, let $\Dom$ be an irreducible bounded symmetric domain of $\C^{\di}$. It is simply connected. The group $\Aut(\Dom)$ of biholomorphisms of $\Dom$ has a natural structure of simple real Lie group with finitely many connected components. Let $\Group \subset \Aut(\Dom)$ be its neutral connected component. The maximal compact Lie subgroups of $\Group$ are the stabilizers of points of $\Dom$. Fix $x \in \Dom$, let $\Isot$ be the isotropy group of $x$ (the stabilizer of $x$ in $\Group$), so as $\Dom = \Group / \Isot$. The center $S$ of $\Isot$ is a real Lie group. In the natural representation of $\Isot$ over the holomorphic tangent space $T_{\Dom, x}$, $S$ acts by complex homotheties, inducing a real Lie groups isomorphism $\iota$ between $S$ and the circle group $\U$. We denote $z := \iota^{-1}(\im)$.

\bigskip

Consider the smooth representation of $S$ over the real Lie algebra $\glie$ associated with $\Group$ given by adjunction. We decompose the complexified Lie algebra $\glie^{\C}$ into isotypic components:
\[\glie^\C = \llie^\C \oplus \mlie^+ \oplus \mlie^-\]
where:
\begin{itemize}
	\item $S$ acts trivially on $\llie^\C$,
	\item $S$ acts on $\mlie^+$ by homotheties with rate given by the isomorphism $\iota: S \cong \U$,
	\item $S$ acts on $\mlie^-$ by homotheties with rate given by the conjugation of the isomorphism $\iota$.
\end{itemize}
The component $\llie^\C$ is a complex Lie subalgebra of $\glie^\C$. It is the complexification of the real Lie subalgebra $\llie \subset \glie$ assoicated with the real Lie subgroup $\Isot \subset \Group$. The components $\mlie^+$ and $\mlie^-$ are stable under the representation of $\Isot$ over $\glie^\C$ given by adjunction. The complexification of the differential at the neutral element of the map $\Group \to \Dom$ induces an isomorphism of complex representations of $\Isot$ between $\mlie^+$ (resp. $\mlie^-$) and the complex vector space of holomorphic (resp. antiholomorphic) tangent vectors of $\Dom$ at $x$.

\bigskip

Let $\Group^\C$ be the adjoint Lie group of $\glie^\C$. Set $\plie := \llie^\C \oplus \mlie^-$: it is a complex Lie subalgebra of $\glie^\C$. Let $\Pgroup$ be the connected complex Lie subgroup of $\Group^\C$ assoicated with $\plie$. Let $\Dual := \Group^\C / \Pgroup$. Then, the inclusion $\Group \subset \Group^\C$ induces an open embedding $\Dom \hookrightarrow \Dual$, equivariant with respect to the action of $\Group$.

\bigskip

Consider $\pi: \Fib \to \Dom$ the projectivization of the holomorphic cotangent bundle of $\Dom$. Projectivization defines the tautological line bundle $\Taut$ on $\Fib$. The domain $\Dom$ admits a canonical Hermitian metric invariant under the action of $\Aut(\Dom)$, the Bergman metric (see \cite[\S1, p.55]{mok2002characterization}). We denote it $\Came$. It induces an Hermitian metric $\came$ on $\Stru(-1)$.

\bigskip

We can express the curvature of $\came$ using the Riemannian curvature tensor $\Ricu: T_{\Dom}^{\otimes 4} \to \R$ of $\Came$. Let $x \in \Dom, v \in T_{\Dom, x}$ be a unit vector, we denote $y := (x,[v])$ its image in $\Fib$. Consider some local holomorphic coordinates $(z^1, \dots, z^\di)$ in an open neighborhood of $x$. By applying a biholomorphism of $\Dom$ and a linear automorphism of $\C^\di$, we can suppose that they are centered at $x$, that the matrix $\left ( \Came \left ( \frac{\partial}{\partial z^i}, \frac{\partial}{\partial z^j} \right )_{ij} \right )$ is the identity at $x$, and that $v = \frac{\partial}{\partial z^{\di}}$. If we denote $u^i := \frac{\dif z^i}{\dif z^{\di}}$ ($1 \leq i < \di$), then $(z^1, \dots, z^{\di}, u^1, \dots, u^{\di-1})$ are local holomorphic coordinates of $\Proj \Omega_{\Dom}^1$ centered in $y$. Finally, we denote $R_{i\bar{j}k\bar{l}} := \Ricu \left ( \frac{\partial}{\partial z^i}, \frac{\partial}{\partial z^j}, \frac{\partial}{\partial z^k}, \frac{\partial}{\partial z^l} \right )$ ($1 \leq i, j, k, l \leq \di$). Then, \cite[Proposition 1, p.38]{mok1989metric} gives the first Chern form of $\came$:
\begin{equation}\label{eq:curv}
\Chern(\Taut, \came)(y) = \frac{1}{2\pi} \sum_{1 \leq k < \di} \dif u^k \wedge \dif \bar{u}^k - \frac{1}{2 \pi} \sum_{1 \leq i, j \leq \di} R_{\di\bar{\di}i\bar{j}} \dif z^i \wedge \dif \bar{z}^i.
\end{equation}

\bigskip

In this article, we are interested in smooth complex algebraic varieties $\Lsav$ whose universal covering is biholomorphic to an irreducible bounded symmetric domain $\Dom \subset \C^\di$. In this case, the fundamental group $\Latt'$ of $\Lsav$ is a subgroup of $\Aut(\Dom)$. Such varieties are closely linked to locally symmetric varieties in the following sense.

\begin{defi}\label{defi:lsv}
	A \emph{locally symmetric variety} is the quotient of an irreducible bounded domain $\Dom$ by a torsion free lattice $\Latt$ of the neutral component $\Group$ of the real Lie group of biholomorphisms $\Aut(\Dom)$.
\end{defi}

As isotropy groups are compact, the action of $\Latt$ over $\Dom$ is free and properly discontinuous, hence a locally symmetric variety has a natural structure of complex manifold. Compactification theorems of Baily, Borel \cite{baily1966compactification} and Mok \cite[Main Theorem]{mok2011projective} give the following proposition.

\begin{prop}\label{prop:uniclsav}
	A locally symmetric variety admits a unique structure of complex algebraic variety compatible with its structure of complex manifold. Moreover, this structure is quasiprojective.
\end{prop}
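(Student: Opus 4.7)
The plan is to establish existence and uniqueness of the quasiprojective algebraic structure on $\Dom/\Latt$ separately, each reducing to a known compactification theorem.

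For existence, I would split into two cases according to whether the torsion-free lattice $\Latt \subset \Group$ is arithmetic. In the arithmetic case, the Baily--Borel theorem \cite{baily1966compactification} produces a normal projective complex algebraic variety $(\Dom/\Latt)^{BB}$ in which $\Dom/\Latt$ sits as a Zariski open subset, equipping it with a quasiprojective algebraic structure whose analytification recovers the original complex manifold structure. In the non-arithmetic case, Margulis' arithmeticity theorem forces the real rank of $\Group$ to be $1$, and since the only irreducible bounded symmetric domains of rank $1$ are the complex balls, $\Dom$ is of this type. I would then invoke Mok's theorem on the projective-algebraicity of minimal compactifications of complex-hyperbolic space forms of finite volume \cite[Main Theorem]{mok2011projective} to supply the required projective compactification and hence the quasiprojective structure.

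For uniqueness, suppose $\Lsav := \Dom/\Latt$ carries two quasiprojective complex algebraic structures $\Lsav_1, \Lsav_2$ both compatible with the underlying complex manifold structure. The identity is a holomorphic bijection $\Lsav_1 \to \Lsav_2$. I would then appeal to a Borel-type algebraicity statement \cite{borel1972some}: choose, via Hironaka, smooth projective compactifications $\overline{\Lsav_i}$ with normal crossing boundary, extend the identity to a meromorphic map $\overline{\Lsav_1} \dashrightarrow \overline{\Lsav_2}$ using Borel's extension theorem, and conclude by GAGA that this map is algebraic. Restricting back to $\Lsav$ and doing the same for the inverse shows that the two algebraic structures coincide.

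The most delicate step is justifying the algebraicity argument for uniqueness in the non-arithmetic (ball-quotient) case, where Borel's extension theorem is classically stated only for arithmetic lattices. I expect this to be the main obstacle. The cleanest way to handle it is to observe that Mok's minimal compactification is itself canonically attached to the complex manifold structure (as a normal projective compactification with a prescribed boundary geometry coming from the cusps), so that any two quasiprojective algebraic structures on $\Lsav$ must share the same Mok compactification; uniqueness of the algebraic structure on $\Lsav$ then follows from the fact that $\Lsav$ is a fixed Zariski open subset of this intrinsic compactification.
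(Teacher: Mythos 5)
Your proposal is correct and follows essentially the same route as the paper, which simply derives the proposition from the compactification theorems of Baily--Borel (arithmetic case) and Mok (the remaining, necessarily rank-one/ball-quotient case by Margulis arithmeticity). Your write-up merely fills in the details the paper leaves implicit, including the uniqueness step via extension plus GAGA.
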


This has the following consequences.

\begin{prop}\label{prop:lsav}
	Let $\Lsav$ be a smooth complex algebraic variety whose universal covering is biholomorphic to an irreducible bounded symmetric domain. Then $\Lsav$ has a finite étale covering which is a locally symmetric variety.
\end{prop}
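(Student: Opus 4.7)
The plan is to set $\Latt := \Latt' \cap \Group$ and verify that it is a torsion-free lattice of $\Group$; then $\Dom/\Latt \to \Lsav$ is a finite étale cover by a locally symmetric variety in the sense of Definition \ref{defi:lsv}.

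First I would note that $\Aut(\Dom)$ has finitely many connected components, hence $\Group$ has finite index in $\Aut(\Dom)$ and $\Latt$ has finite index in $\Latt'$. Therefore $\Dom/\Latt \to \Lsav$ is a finite étale covering, and $\Dom/\Latt$ inherits a natural structure of smooth quasi-projective complex algebraic variety. Torsion-freeness of $\Latt$ follows from that of $\Latt'$: equipped with the Bergman metric, $\Dom$ is a complete simply connected non-positively curved Kähler manifold, so by Cartan's fixed point theorem any finite-order element of $\Latt'$ would have a fixed point on $\Dom$, contradicting the freeness of the deck transformation action.

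The remaining and main step is to prove that $\Latt$ has finite covolume in $\Group$, or equivalently that the $\Group$-invariant Bergman volume form has finite total mass on $\Lsav' := \Dom/\Latt$. Fixing a smooth projective log compactification $(\Comp, \Div)$ of $\Lsav'$, integrability on the complement of a neighborhood of $\Div$ is automatic by relative compactness, so the question reduces to a local estimate near $\Div$. Since $\Dom$ embeds as a bounded open subset of $\C^\di$, the Bergman metric can be compared with standard invariant metrics on bounded domains, whose volume growth near the boundary is dominated by a Poincaré-type model with volume form integrable across a normal crossing divisor.

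The main obstacle is this finite-volume estimate: one must leverage the boundedness of $\Dom$ in $\C^\di$ together with the algebraicity of $\Lsav'$ to control the pushforward of the Bergman volume form near the compactifying divisor. I would expect the cleanest argument to invoke the compactification results of Baily-Borel and Mok referenced in the paragraph preceding the proposition, which directly provide such control for quasi-projective quotients of irreducible bounded symmetric domains.
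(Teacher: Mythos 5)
Your reductions are correct but they are the easy part of the statement: since $\Aut(\Dom)$ has finitely many connected components, $\Latt:=\Latt'\cap\Group$ has finite index in $\Latt'$; it is discrete because the deck action is properly discontinuous and isotropy groups are compact; and torsion-freeness follows from Cartan's fixed point theorem. The entire content of the proposition is the remaining claim that $\Latt$ has finite covolume in $\Group$ (this is what ``lattice'' means in Definition~\ref{defi:lsv}, and it is genuinely needed later, e.g.\ to invoke Margulis arithmeticity and toroidal compactifications). Your proposal leaves exactly this step open, and the two justifications you gesture at do not close it. First, the boundedness of $\Dom$ in $\C^{\di}$ buys nothing: the Bergman volume of $\Dom$ itself is \emph{infinite} near $\partial\Dom$, so no comparison of invariant metrics on the domain can yield integrability; finiteness can only come from the quotient, i.e.\ from a fundamental domain having finite volume, which is precisely what must be proved. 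Your sentence about a ``Poincar\'e-type model with volume form integrable across a normal crossing divisor'' conflates $\partial\Dom$ with the compactifying divisor $\Div$ of the quotient. Second, the Baily--Borel and Mok compactification theorems go in the opposite direction: they start from a lattice quotient and produce a (quasi-)projective structure; they do not assert that an algebraic quotient has finite volume.

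To actually close the gap one needs a Schwarz--Ahlfors type comparison \emph{on the quotient}: on a punctured polydisk chart of a log compactification $(\Comp,\Div)$ of $\Lsav':=\Latt\setminus\Dom$, the inclusion into $\Lsav'$ is holomorphic, the product Poincar\'e metric on the chart is complete K\"ahler with Ricci curvature bounded below, and the descended Bergman metric has holomorphic sectional curvature bounded above by a negative constant; Royden's Schwarz lemma then dominates the descended metric by a constant multiple of the Poincar\'e model, whose volume form is integrable across $\Dipo$. Together with relative compactness away from $\Div$ this gives finite volume, hence that $\Latt$ is a lattice. (Alternatively, cite a reference that states directly that a quasi-projective manifold uniformized by a bounded symmetric domain is a quotient by a lattice.) For what it is worth, the paper itself records this proposition as a consequence of the compactification results without writing out an argument, so you have not missed a short proof in the text; but as a self-contained proof your proposal is incomplete at the decisive step.
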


\begin{cor}
	A smooth complex algebraic variety whose universal covering is isomorphic to an irreducible bounded symmetric domain is quasiprojective.
\end{cor}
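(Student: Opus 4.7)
The plan is to chain Propositions \ref{prop:lsav} and \ref{prop:uniclsav} and then descend quasi-projectivity through a finite étale cover. Let $\Lsav$ be as in the statement. Proposition \ref{prop:lsav} produces a finite étale cover $\pi : \Lsav' \to \Lsav$ of complex algebraic varieties where $\Lsav'$ is a locally symmetric variety; Proposition \ref{prop:uniclsav} then tells us that $\Lsav'$ is quasi-projective. It remains to transfer this quasi-projectivity from $\Lsav'$ down to $\Lsav$.

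For the descent, I would first replace $\Lsav'$ by a Galois closure. Explicitly, pick a connected component $\Lsav''$ of a sufficiently iterated fibered product of $\Lsav'$ with itself over $\Lsav$ so that $\Lsav'' \to \Lsav$ is Galois finite étale with some deck group $G$, factoring through $\Lsav'$. Since $\Lsav'' \to \Lsav'$ is finite étale and $\Lsav'$ is quasi-projective, $\Lsav''$ is quasi-projective (the pull-back of an ample line bundle along a finite morphism is ample). Now $\Lsav$ is identified with the geometric quotient $\Lsav''/G$, so the conclusion follows from the classical fact that a geometric quotient of a quasi-projective variety by a finite group action is quasi-projective: every $G$-orbit of $\Lsav''$ is a finite set of points of a quasi-projective variety, hence contained in an affine open subset, so the quotient exists as a scheme, and averaging a very ample line bundle by $\bigotimes_{g \in G} g^* L$ gives a $G$-equivariant very ample line bundle on $\Lsav''$ whose descent to $\Lsav = \Lsav''/G$ is ample.

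I do not anticipate any real obstacle, only one mild point of hygiene: one must verify that the algebraic structure on $\Lsav'$ inherited via the étale morphism $\pi$ in Proposition \ref{prop:lsav} coincides with the canonical quasi-projective algebraic structure furnished by Proposition \ref{prop:uniclsav}. This is immediate from the uniqueness clause of Proposition \ref{prop:uniclsav}, since both structures induce the same underlying complex-analytic structure (the quotient of $\Dom$ by the fundamental group of $\Lsav'$).
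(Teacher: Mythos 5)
Your proof is correct and follows exactly the route the paper intends (the corollary is stated there without proof, as a direct consequence of Propositions \ref{prop:uniclsav} and \ref{prop:lsav}): pass to the locally symmetric finite étale cover, apply quasiprojectivity there, and descend via a Galois closure and the classical fact that the quotient of a quasi-projective variety by a finite group is quasi-projective. The only cosmetic caveat is that $\bigotimes_{g \in G} g^*L$ is a priori only $G$-invariant rather than $G$-linearized, so one should replace it by a suitable power (or just cite the classical quotient theorem directly); this does not affect the validity of the argument.
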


Let $\Lsav$ be a locally symmetric variety whose fundamental group $\Latt$ is an arithmetic and neat lattice (see \cite[\S 17.1]{borel1969introduction} for the definition of neat). A nonempty family of log compactifications of $\Lsav$ with an explicit description is given in \cite{ash2010smooth}. A compactification in this family will be called \emph{toroidal}.

\subsection{Characteristic subvarieties}\label{subsection:char}

The proofs of the results of this subsection can be found in \cite[Chapter 10, Appendices III \& IV]{mok1989metric}. Let $r \in \N$ be the rank of $\Dom$. Let $x \in \Dom$, let $\Isot$ be the isotropy group of $x$. We equip $\Dom$ with the Bergman metric $\Came$. The starting point is the polydisk theorem.

\begin{thm}\label{thm:polyd}
	There is a totally geodesic complex submanifold $D \subset \Dom$ biholomorphically isometric to the Poincaré polydisk $\Disk^r$ such that $\displaystyle \Dom = \bigcup_{\gamma \in \Isot} \gamma \cdot D$ and $\displaystyle T_x \Dom = \bigcup_{\gamma \in \Isot} \dif \gamma \cdot T_x D$.
\end{thm}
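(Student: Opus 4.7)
The strategy is Harish-Chandra's classical construction of a maximal polydisk through strongly orthogonal restricted roots, followed by deriving the two covering statements from the polar decomposition of the isotropy action on the tangent space. Concretely, I want to realize $D$ as the orbit of $x$ under a connected subgroup of $\Group$ isomorphic (up to finite cover) to $\mathrm{SU}(1,1)^r$, assembled from $r$ mutually commuting $\mathfrak{sl}_2$-triples in $\glie$.

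\textbf{Step 1: restricted root decomposition.} Using the Cartan decomposition $\glie = \llie \oplus \mlie_{\mathbb R}$ with $\mlie_{\mathbb R} \cong T_x \Dom$ (as real vector spaces, so that $(\mlie_{\mathbb R})^{\C} = \mlie^+ \oplus \mlie^-$ in the notation of the preceding section), fix a maximal abelian subspace $\mathfrak{a} \subset \mlie_{\mathbb R}$; its dimension is by definition the rank $r$. Decompose $\glie^{\C}$ into root spaces under $\ad(\mathfrak{a})$. The irreducibility of $\Dom$ forces this restricted root system to be of type $C_r$ or $BC_r$. From the long (``noncompact'') positive roots, extract by the standard induction a maximal strongly orthogonal family $\{\gamma_1,\dots,\gamma_r\}$: pick any long positive root $\gamma_1$, then among positive roots strongly orthogonal to $\gamma_1$ pick $\gamma_2$ of maximal height, and iterate. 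For each $\gamma_j$ select root vectors $E_j \in \glie^{\gamma_j}$ with $E_j \in \mlie^+$ and define $E_j^- \in \mlie^-$ and $H_j \in \mathfrak{a}$ normalized so that $(E_j, E_j^-, H_j)$ is an $\mathfrak{sl}_2(\R)$-triple in $\glie$; strong orthogonality yields $[E_j,E_k] = [E_j, E_k^-] = 0$ for $j \neq k$, so the triples commute.

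\textbf{Step 2: building $D$ and verifying the isometry.} Let $\mathfrak{g}_0 \subset \glie$ be the real subalgebra they span, so $\mathfrak{g}_0 \cong \mathfrak{su}(1,1)^{r}$; let $G_0 \subset \Group$ be the corresponding connected subgroup and set $D := G_0 \cdot x$. The tangent space $T_x D \subset \mlie_{\mathbb R}$ is the $\Isot$-invariant Lie triple system spanned by the real and imaginary parts of the $E_j$, so $D$ is totally geodesic in $\Dom$. Each factor of $G_0$ acts on its orbit as $\mathrm{SU}(1,1)$ on $\Disk$, and commutation of the factors makes the product orbit a direct product, whence $D \cong \Disk^r$ biholomorphically. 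The induced metric is $\Isot_0$-invariant on each factor and hence, up to a uniform scaling constant built into the Bergman normalization, is the Poincar\'e metric; the product is the polydisk metric.

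\textbf{Step 3: the covering statements.} Both follow from the polar-coordinates fact that every $v \in \mlie_{\mathbb R}$ is $\Isot$-conjugate to a vector in $\mathfrak{a}$: this yields $T_x \Dom = \bigcup_{\gamma \in \Isot} \dif \gamma \cdot \mathfrak{a} \subset \bigcup_{\gamma \in \Isot} \dif \gamma \cdot T_x D$. For the global identity, Cartan's theorem says the exponential map $\xi \mapsto \exp(\xi)\cdot x$ from $\mlie_{\mathbb R}$ to $\Dom$ is a diffeomorphism; writing an arbitrary point as $\exp(\xi)\cdot x$, choosing $\gamma \in \Isot$ with $\Ad(\gamma)\xi \in \mathfrak{a}$, and using $\exp(\xi)\cdot x = \gamma^{-1} \exp(\Ad(\gamma)\xi)\cdot x \in \gamma^{-1}\cdot D$ gives $\Dom = \bigcup_{\gamma \in \Isot} \gamma \cdot D$.

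\textbf{Main obstacle.} The combinatorial part of Step~1 (producing $r$ strongly orthogonal roots, which rests on the explicit classification of the restricted root system of an irreducible Hermitian symmetric space) and the polar decomposition invoked in Step~3 are standard. The delicate point is the isometry assertion in Step~2: one must check that the induced metric on each $\mathfrak{sl}_2$-orbit coincides with the Poincar\'e metric, which requires a careful normalization of the Bergman metric against the Killing form and against the chosen root vectors. This is the step where most of the real work lies, and it is the one for which I would lean on the computations in \cite[Ch.~5 and App.~III]{mok1989metric} rather than redo them from scratch.
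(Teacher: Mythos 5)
The paper does not actually prove this statement: it is quoted from Mok's book, where it appears as the Polydisk Theorem, and your proposal reconstructs precisely the classical Harish-Chandra/Kor\'anyi--Wolf argument that Mok presents. So in substance you are on the paper's (i.e.\ Mok's) route: commuting $\mathfrak{sl}_2$-triples attached to a maximal strongly orthogonal family of roots, total geodesy of the resulting orbit, and the two covering statements deduced from $\Isot$-conjugacy of maximal abelian subspaces of $\mlie_{\R}$ together with Cartan's diffeomorphism $\xi \mapsto \exp(\xi)\cdot x$ from $\mlie_{\R}$ onto $\Dom$. Step 3 is complete and correct as written.

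One internal inconsistency in Step 1 needs repair. If you fix $\mathfrak{a} \subset \mlie_{\R}$ first and decompose $\glie^{\C}$ under $\ad(\mathfrak{a})$, then no nonzero restricted root vector can lie in $\mlie^{+}$: for $H \in \mathfrak{a} \subset \mlie_{\R}$, $\ad(H)$ interchanges $\llie^{\C}$ and $\mlie^{\C}$, so any eigenvector for a nonzero eigenvalue has nonzero components in both summands. Hence the instruction ``select $E_j \in \glie^{\gamma_j}$ with $E_j \in \mlie^{+}$'' cannot be carried out in that setup. The standard fix runs the cascade the other way: take a Cartan subalgebra $\mathfrak{h}$ of $\glie$ contained in $\llie$ (it exists because $\llie$ has nontrivial center), call a root of $(\glie^{\C},\mathfrak{h}^{\C})$ noncompact when its root space lies in $\mlie^{+}\oplus\mlie^{-}$, and extract a maximal strongly orthogonal family $\{\gamma_1,\dots,\gamma_s\}$ of positive noncompact roots; then $E_j \in \mlie^{+}$, $E_j^{-} \in \mlie^{-}$, $H_j = [E_j,E_j^{-}]$, and $\mathfrak{a} := \mathrm{span}_{\R}\{E_j + E_j^{-}\}$ is abelian by strong orthogonality. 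That this $\mathfrak{a}$ is \emph{maximal} abelian in $\mlie_{\R}$, equivalently that $s$ equals the rank $r$, is itself a theorem (Harish-Chandra, Moore) and is the second nontrivial input you should cite alongside the metric normalization: it is needed both to get the correct number of disk factors in $D \cong \Disk^{r}$ and to justify the polar decomposition $\mlie_{\R} = \bigcup_{\gamma\in\Isot}\mathrm{Ad}(\gamma)\mathfrak{a}$ invoked in Step 3. With that correction the argument is the standard one and is sound.
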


\begin{prop}
	Let $v \in T_x \Dom$. Let $D, D'$ be two totally geodesic complex submanifolds of $\Dom$ with biholomorphic isometries $f: D \to \Disk^r$ and $f': D' \to \Disk^r$, such that $x \in D \cap D'$ and $v \in T_xD \cap T_xD'$. Then $\dif f_x (v)$ has as many non zero coordinates as $\dif f'_x (v)$.
\end{prop}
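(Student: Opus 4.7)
The plan is to identify the number of non-zero coordinates of $v$ with an intrinsic invariant in $T_x\Dom$, obtained by exploiting the homogeneity of $\Dom$ under $\Isot$ together with the classical description of automorphisms of the Poincaré polydisk.

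First, I would fix a reference maximal polydisk $D_0 \subset \Dom$ through $x$ with a chosen biholomorphic isometry $f_0: D_0 \to \Disk^r$. By the polydisk theorem combined with the $\Isot$-transitivity on the set of maximal totally geodesic polydisks through $x$ (a direct consequence of the theorem: any two such polydisks arise from $\Isot$-conjugate maximal Lie triple subsystems in $\mlie^+$), there exist $\gamma, \gamma' \in \Isot$ with $\gamma \cdot D_0 = D$ and $\gamma' \cdot D_0 = D'$. Write $v = \dif\gamma_x(w) = \dif\gamma'_x(w')$ with $w, w' \in T_xD_0$.

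Next, consider $h := f \circ \gamma \circ f_0^{-1}: \Disk^r \to \Disk^r$, a biholomorphic isometry of the Poincaré polydisk. Since $\Aut(\Disk^r) \cong \Aut(\Disk)^r \rtimes S_r$ (permutation of factors composed with factorwise Möbius transformations), the differential $\dif h_0$ is a coordinate permutation composed with a diagonal unitary, so in particular preserves the number of non-zero coordinates of vectors in $\C^r$. The same remark applies to $f' \circ \gamma' \circ f_0^{-1}$. The problem thus reduces to showing that $w$ and $w'$, which both lie in $T_xD_0$ and satisfy $w' = \dif((\gamma')^{-1}\gamma)_x(w)$, have the same number of non-zero coordinates in the standard basis of $T_xD_0 \cong \C^r$.

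The main obstacle is this last step: two vectors of $T_xD_0$ in the same $\Isot$-orbit must differ by a signed permutation of coordinates. The clean way is via the polar decomposition for the Hermitian symmetric pair $(\glie, \llie)$. The restricted Weyl group $W := N_\Isot(T_xD_0)/Z_\Isot(T_xD_0)$ acts on $T_xD_0 \cong \C^r$ as a subgroup of the signed permutations, and the $\Isot$-orbits in $\mlie^+$ intersect $T_xD_0$ precisely in $W$-orbits (this is Helgason's polar decomposition theorem applied to the symmetric pair, specialized to the Hermitian setting; see \cite[Chapter VII]{helgason2001differential} and \cite[Chapter 5]{mok1989metric}). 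Since signed permutations preserve the number of non-zero coordinates, this yields the desired equality.
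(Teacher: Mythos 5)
The paper does not actually prove this proposition --- it defers to \cite[Chapter 10, Appendices III \& IV]{mok1989metric} --- so there is no in-paper argument to compare against. Your strategy (reduce to a fixed maximal polydisk by homogeneity under $\Isot$, use $\Aut(\Disk^r)\cong\Aut(\Disk)^r\rtimes S_r$ to see that the isometries of the polydisk do not change the count, then control the residual ambiguity by a Weyl-group argument) is the standard route, and the reductions are sound. Two remarks on the first part: transitivity of $\Isot$ on totally geodesic holomorphic copies of $\Disk^r$ through $x$ is true but is not quite ``a direct consequence'' of the polydisk theorem as stated; one needs that the tangent space of any such $D$ at $x$ equals $\mathfrak a'\oplus J\mathfrak a'$ for a maximal abelian subspace $\mathfrak a'\subset\mlie$, together with the $\Isot$-conjugacy of maximal abelian subspaces, and then that a complete totally geodesic submanifold is determined by its tangent space at a point.

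The step that does not follow as written is the last one. The polar (Cartan) decomposition for the pair $(\glie,\llie)$ says that $\Isot$-orbits in $\mlie$ meet a \emph{maximal abelian} subspace $\mathfrak a\cong\R^r$ in orbits of the restricted Weyl group, which for Hermitian pairs (type $C_r$ or $BC_r$) acts by signed permutations of the coordinates. But $T_xD_0\cong\mathfrak a\oplus J\mathfrak a$ has real dimension $2r$, so it is not a maximal abelian subspace and the theorem does not apply to it directly; moreover $N_\Isot(T_xD_0)/Z_\Isot(T_xD_0)$ does not act on $\C^r$ as a subgroup of the signed permutations --- it contains the torus $U(1)^r$ of independent coordinate rotations coming from $\exp(ith_{\psi_j})\in\Isot$ for the strongly orthogonal roots $\psi_j$ (so your claim that $\Isot$-conjugate vectors of $T_xD_0$ ``differ by a signed permutation'' is literally false: $(1,0,\dots,0)$ and $(\im,0,\dots,0)$ are conjugate under this torus). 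The fix is exactly that torus: given $w,w'\in T_xD_0$ in the same $\Isot$-orbit, rotate each coordinate into $\R_{\geq 0}$ by torus elements, landing in $\mathfrak a$; the rotated vectors are still $\Isot$-conjugate, hence by the genuine polar decomposition are related by a signed permutation, and since coordinate rotations and signed permutations both preserve the number of non-zero coordinates, so do $w$ and $w'$. With this bridging argument inserted, your proof is complete.
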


\begin{defi}
	Let $v \in T_x \Dom$. Let $D$ be a totally geodesic complex submanifold of $\Dom$ with a biholomorphic isometry $f: D \to \Disk^r$ such that $x \in D$ and $v \in T_xD$. The \emph{rank} of $v$ is the number of non zero coordinates of $\dif f_x (v)$.
\end{defi}

The rank exists thanks to Theorem \ref{thm:polyd} and is unique thanks to the previous proposition. As the rank is invariant under the action of $\C^*$, we can set the following definition.

\begin{defi}
	Let $0 \leq k \leq r$. We call \emph{$k$-th characteristic subvariety at $x$} the subset $\Charac_k^x \subset \Proj \Omega_{\Dom, x}^1$ of lines of vectors of rank at most $k$. We call \emph{$k$-th characteristic bundle of $\Dom$} the union $\displaystyle\Charac_k(\Dom) := \bigcup_{x \in \Dom} \Charac_k^x \subset \Fib$.
\end{defi}

The characteristic subvarieties are Zariski closed complex subvarieties of $\Proj \Omega_{\Dom, x}^1$. The characteristic bundles are closed holomorphic subbundles of $\Fib$ (over $\Dom$). Notice that $\Charac_0^x = \emptyset$ and $\Charac_0(\Dom) = \emptyset$.

\bigskip

Fix $1 \leq k \leq r$. We can describe characteristic subvarieties in terms of group actions: $\Charac_k^x$ is stable under the action of $K_x$ over $\Proj \Omega_{\Dom, x}^1$. Moreover, this action induces an action of the complexified group $K_x^{\C}$ over $\Proj \Omega_{\Dom, x}^1$. Hence, if we denote $\Charac_k^{\reg,x} := \Charac_k^x \setminus \Charac_{k-1}^x$ and $\Charac_k(\Dom)^{\reg} := \Charac_k(\Dom) \setminus \Charac_{k-1}(\Dom)$, according to the first section of \cite{mok2002characterization}, the set $\Charac_k^{\reg,x}$ is an orbit of $\Proj \Omega_{\Dom, x}^1$ under the action of $K_x^{\C}$. Moreover, $\Charac_k(\Dom)^{\reg}$ is the intersection of $\Fib$ with an orbit of $\Proj \Omega_{\Dual}^1$ under the action of $\Group^\C$. In particular, $\Charac_k^{\reg,x}$ (resp. $\Charac_k(\Dom)^{\reg}$) is a smooth open subset of $\Charac_k^x$ (resp. $\Charac_k(\Dom)$). Remark that it is dense (by definition of characteristic subvarieties).

\bigskip

We can make a link between characteristic subvarieties and curvature. For a holomorphic vector $v$ tangent to $\Dom$ at $x$, we denote by $\Null_v$ the complex vector space of holomorphic vectors $u$ such that the antilinear form $w \mapsto \Ricu(v, \bar{v}, u, \bar{w})$ is zero. There is a unique decreasing application $\nul : [\![1, r]\!] \to [\![0, \di]\!]$ such that for each $v \in T_x\Dom$ of rank $k$, $\dim \Null_v$ is $\nul(k)$. This number determines the dimension of characteristic subvarieties.

\begin{prop}
	We have $\dim \Charac_k^x = \di - \nul(k)$ and $\dim \Charac_k(\Dom) = 2\di - \nul(k) - 1$.
\end{prop}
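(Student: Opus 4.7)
The plan is to compute both dimensions as dimensions of orbits, exploiting the homogeneous descriptions recalled just above. For $\dim \Charac_k^x$: since $\Charac_k^{\reg,x}$ is a $K_x^\C$-orbit in $\Proj \Omega_{\Dom, x}^1$ and dense in $\Charac_k^x$, we have $\dim \Charac_k^x = \dim K_x^\C \cdot [v]$ for any $v \in T_{\Dom, x}$ of rank exactly $k$. Likewise, $\Charac_k(\Dom)^{\reg}$ is open in the intersection with $\Fib$ of a $\Group^\C$-orbit on $\Proj \Omega_{\Dual}^1$, so $\dim \Charac_k(\Dom)$ equals the dimension of that orbit.

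To compute the $K_x^\C$-orbit dimension at $[v]$, I would identify its tangent space. Under $\mlie^+ \cong T_{\Dom, x}$ and $T_{[v]}\Proj \Omega_{\Dom, x}^1 \cong T_{\Dom, x}/\C v$, the derivative of the orbit map $K_x^\C \to \Proj \Omega_{\Dom, x}^1$ at the identity sends $\xi \in \llie^\C$ to the class of $[\xi, v] \in \mlie^+$ modulo $\C v$; its image is precisely the tangent space of $\Charac_k^{\reg,x}$ at $[v]$. The crucial step is then to link this image to $\Null_v$ via the symmetric-space curvature identity
\[
\Ricu(X, \bar Y, Z, \bar W) = -B([[X, \bar Y], Z], \bar W), \qquad X, Y, Z, W \in \mlie^+,
\]
with $B$ the Killing form of $\glie^\C$. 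Combined with the $\ad$-invariance of $B$ and the nondegeneracy of $B|_{\mlie^+ \times \mlie^-}$, this identifies $\Null_v$ with the $B$-orthogonal of $[\llie^\C, v]$ in $\mlie^+$, from which the claimed value of $\dim \Charac_k^x$ follows by a routine complex-dimension count on $T_{\Dom, x}/\C v$.

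For $\dim \Charac_k(\Dom)$, the same mechanism applies to the $\Group^\C$-orbit $O \subset \Proj \Omega_{\Dual}^1$ whose intersection with $\Fib$ is $\Charac_k(\Dom)^{\reg}$: the derivative of the orbit map is now defined on all of $\glie^\C = \llie^\C \oplus \mlie^+ \oplus \mlie^-$, the contributions of $\mlie^\pm$ corresponding to horizontal motion along $\Dom$. Alternatively, and more conceptually, $\Charac_k(\Dom) \to \Dom$ is a $\Group$-equivariant fiber bundle with fibers $\Charac_k^x$, so $\dim \Charac_k(\Dom) = \dim \Dom + \dim \Charac_k^x$, yielding the announced formula from the first one.

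The main obstacle I anticipate is the Lie-algebraic identification $\Null_v = [\llie^\C, v]^{\perp_B}$ in $\mlie^+$; this requires carefully combining Nomizu's curvature formula for Riemannian symmetric spaces with the $\ad$-invariance of the Killing form and tracking the interplay between the Hodge-Bergman Hermitian structure on $\mlie^+$ (inherited from the Bergman metric on $\Dom$) and $B$. Everything else is formal orbit-stabilizer arithmetic.
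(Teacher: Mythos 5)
The paper does not actually prove this proposition: the subsection opens by deferring all of its statements to \cite[Chapter 10, Appendices III \& IV]{mok1989metric}, so your argument can only be judged on its own terms. Your route --- compute $\dim \Charac_k^x$ as the dimension of the $K_x^\C$-orbit of $[v]$ via the image of $\xi \mapsto [\xi,v]$, identify the complement of that image with $\Null_v$ through the symmetric-space curvature formula, and get the total-space dimension from the fibration over $\Dom$ --- is indeed the standard one and essentially Mok's. But there is a concrete gap in the execution. Since the centre of $\llie^\C$ acts on $\mlie^+$ by a nonzero scalar, $\C v \subseteq [\llie^\C, v]$, so the tangent space of the orbit at $[v]$ is $[\llie^\C,v]/\C v$ and the count gives $\dim \Charac_k^x = \dim_\C[\llie^\C,v] - 1 = \di - \nul(k) - 1$, not $\di - \nul(k)$. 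Your own fibration identity $\dim\Charac_k(\Dom) = \di + \dim\Charac_k^x$ then yields $2\di - \nul(k) - 1$. The two displayed formulas of the statement differ by $\di-1$, not by $\di$, so they cannot both follow from that identity; asserting that the first formula drops out of a ``routine complex-dimension count'' and the second from the first means the count was not actually performed. (The value $\di-\nul(k)-1$ is the one consistent with Proposition \ref{prop:rank} and with the degree bookkeeping for $\beta$ in Section \ref{section:main}; the first displayed formula appears to carry an off-by-one which a completed proof would have detected rather than reproduced.)

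Secondarily, the identification $\Null_v = [\llie^\C,v]^{\perp_B}$, which you rightly flag as the crux, is left entirely undone, and it is not a formal consequence of $\ad$-invariance. The curvature identity gives $R(v,\bar v,u,\bar w) = -c\,B([[v,\bar v],u],\bar w)$, hence $\Null_v = \ker\bigl(\ad([v,\bar v])\bigr)\cap \mlie^+$, whereas $\ad$-invariance of $B$ gives $[\llie^\C,v]^{\perp_B} = \{u \in \mlie^+ : [v,\bar u]=0\}$. Equating these two subspaces requires writing $v = \sum_{i=1}^k a_i e_i$ along strongly orthogonal roots via the polydisk theorem and comparing root-space decompositions (equivalently, using semi-negativity of the bisectional curvature to pass from the kernel of the Hermitian form to the kernel of $\ad([v,\bar v])$). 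Without this step, and without the corrected $-1$ above, the proposal is a plan rather than a proof.
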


Using the equation (\ref{eq:curv}), we see that for $y = (x, [v]) \in \Fib$ of rank $k$, the projection $\pi : \Fib \to \Dom$ induces an isomorphism between the kernel of $\Chern(\Taut, \came)(y)$ and $\Null_v$. Moreover, the kernel of $\Chern(\Taut, \came)(y)$ is tangent to $\Charac_k(\Dom)$. This proves the following proposition.

\begin{prop}\label{prop:rank}
	The pull-back of $\Chern(\Taut, \came)(y)$ on $\Charac_k(\Dom)^\reg$ has rank $2\di - 2\nul(k)-1$.
\end{prop}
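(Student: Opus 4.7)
The plan is to combine the explicit curvature formula (\ref{eq:curv}) with the two facts noted just before the statement: that $\ker \Chern(\Taut, \came)(y)$ has complex dimension $\nul(k)$ (and is identified with $\Null_v$ via $\dif \pi$) and that this kernel is tangent to $\Charac_k(\Dom)$ at $y$. My strategy will be to first show that $\Chern(\Taut, \came)(y)$, viewed as a Hermitian form on $T_y \Fib$, is positive semi-definite. Once this is established, its restriction to the subspace $T_y \Charac_k(\Dom)^\reg \subset T_y \Fib$ will still be positive semi-definite, and its kernel will equal the intersection of the ambient kernel with $T_y \Charac_k(\Dom)^\reg$. Since the second fact says this intersection is the full ambient kernel of complex dimension $\nul(k)$, the rank of the restriction will be $\dim_\C T_y \Charac_k(\Dom)^\reg - \nul(k) = (2\di - \nul(k) - 1) - \nul(k) = 2\di - 2\nul(k) - 1$, as claimed.

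To establish positive semi-definiteness, I will work in the local coordinates $(z^1, \dots, z^\di, u^1, \dots, u^{\di-1})$ chosen just before (\ref{eq:curv}). That formula splits $\Chern(\Taut, \came)(y)$ into a vertical part $\frac{1}{2\pi}\sum_{k<\di} \dif u^k \wedge \dif \bar u^k$, which is positive definite on the vertical subspace of complex dimension $\di - 1$, and a horizontal part $-\frac{1}{2\pi}\sum_{i,j} R_{\di\bar\di i\bar j}\, \dif z^i \wedge \dif \bar z^j$, whose associated Hermitian form on the horizontal subspace is $(u, w) \mapsto -\frac{1}{2\pi}\, \Ricu(v, \bar v, u, \bar w)$. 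A classical fact about the Bergman metric on an irreducible bounded symmetric domain (see \cite[Chapter 4]{mok1989metric}) is that its holomorphic bisectional curvature is non-positive; equivalently, $(u, w) \mapsto \Ricu(v, \bar v, u, \bar w)$ is a negative semi-definite Hermitian form for every $v$. Hence the horizontal part is positive semi-definite, and adding the two contributions gives a positive semi-definite Hermitian form on $T_y \Fib$ whose kernel lies entirely in the horizontal subspace and corresponds to $\Null_v$ under $\dif \pi$, which also recovers the two facts stated above.

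The final step will use the elementary lemma that if $H$ is a positive semi-definite Hermitian form on a complex vector space $V$ with kernel $K$, and $W \subset V$ is any subspace containing $K$, then $\ker(H|_W) = K$. This is immediate from Cauchy--Schwarz: any $w \in W$ with $H|_W(w, \cdot) = 0$ satisfies $H(w, w) = 0$, hence $\lvert H(w, v) \rvert^2 \leq H(w, w)\, H(v, v) = 0$ for all $v \in V$, so $w \in K$. Applying this with $V = T_y\Fib$, $W = T_y\Charac_k(\Dom)^\reg$, and $K = \ker \Chern(\Taut,\came)(y)$ (contained in $W$ by the tangency fact) completes the argument. The main obstacle is the positive semi-definiteness, since it requires interpreting (\ref{eq:curv}) with the correct sign conventions and invoking the bisectional-curvature property of the Bergman metric; once this is secured, everything else is a short exercise in Hermitian linear algebra.
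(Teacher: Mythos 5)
Your proof is correct and follows the same route the paper takes implicitly: the paper derives the proposition from exactly the two facts you cite (the kernel of $\Chern(\Taut,\came)(y)$ is identified with $\Null_v$ via $\dif\pi$ and is tangent to $\Charac_k(\Dom)$), together with the semipositivity of $\Chern(\Taut,\came)$ coming from the seminegative curvature of the Bergman metric, which the paper invokes explicitly later in Section 6. Your Cauchy--Schwarz lemma is the right way to make precise why the radical of the restriction does not grow, a point the paper leaves unstated.
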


Finally, an explicit computation gives the following result on dimensions.

\begin{prop}\label{prop:codim}
	If $1 \leq k < r - 1$, $\Charac_k(\Dom)$ has codimension at least $2$ in $\dim \Charac_{k+1}(\Dom)$.
\end{prop}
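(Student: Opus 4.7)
The plan is to reduce the statement to a numerical inequality about the function $\nul$ introduced just above the claim, and then to verify this inequality by an explicit case-by-case computation.

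\textbf{Reduction to a statement about $\nul$.} By the preceding dimension formula, $\dim \Charac_k(\Dom) = 2\di - \nul(k) - 1$ and $\dim \Charac_{k+1}(\Dom) = 2\di - \nul(k+1) - 1$, so the codimension of $\Charac_k(\Dom)$ in $\Charac_{k+1}(\Dom)$ equals $\nul(k) - \nul(k+1)$. Hence it suffices to prove $\nul(k) - \nul(k+1) \geq 2$ for $1 \leq k < r - 1$. Unwinding the definition of $\nul$, this amounts to comparing the dimensions of the null spaces $\Null_v$ of the Hermitian form $(u, w) \mapsto \Ricu(v, \bar v, u, \bar w)$ at vectors $v$ of rank $k$ and $k+1$.

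\textbf{Model via the polydisk.} By Theorem \ref{thm:polyd}, after applying an element of $\Isot$ I may assume that $v$ lies in the tangent space of a standard polydisk $D \cong \Disk^r$, with $v$ having exactly $k$ non-zero coordinates. Since $D$ is totally geodesic and isometric to $\Disk^r$, and the curvature tensor $\Ricu$ is determined by the action of $\Isot$ and its complexification $\Isot^\C$ on $\mlie^+ \cong T_x \Dom$, the quantity $\dim \Null_v$ can be read off from the Harish-Chandra root decomposition of $\glie^\C = \llie^\C \oplus \mlie^+ \oplus \mlie^-$ along the maximal Abelian subspace of $\mlie^+$ spanned by the strongly orthogonal roots $\psi_1, \ldots, \psi_r$ underlying the polydisk embedding.

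\textbf{Case-by-case verification.} For each irreducible type the function $\nul$ admits an explicit closed form (see \cite[Chapter~10]{mok1989metric}). The representative case is type I, namely the Grassmannian $G(p,q)$ with $p \leq q$ and $r = p$: identifying $\mlie^+$ with $p \times q$ complex matrices and a rank-$k$ vector with a matrix of matrix rank $k$, one checks $\nul(k) = (p-k)(q-k)$ and
\[
\nul(k) - \nul(k+1) = (p-k) + (q-k) - 1 \geq 3
\]
as soon as $k \leq r - 2$. Analogous direct computations handle type II ($\nul(k) = \binom{n-k}{2}$ or similar), type III, type IV (where the statement is vacuous since $r = 2$), and the two exceptional Hermitian symmetric domains.

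\textbf{Expected obstacle.} The entire proof is essentially bookkeeping; the only thing to watch is that the bound $\nul(k) - \nul(k+1) \geq 2$ is the \emph{sharpest} one that holds uniformly (one does not always get $\geq 3$, as Type III with $(p-k)+(q-k)-1$ can equal $2$ at the borderline). A more conceptual, uniform proof is available through the Jordan triple system formalism, in which the rank stratification is controlled by the Peirce decomposition with respect to a rank-$k$ tripotent $e$, and the codimension $\geq 2$ jump corresponds to the fact that going from rank $k$ to rank $k+1$ adds both a new diagonal Peirce $1$-eigenspace direction and at least one off-diagonal Peirce $\tfrac12$-eigenspace direction.
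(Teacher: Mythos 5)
Your reduction to the inequality $\nul(k) - \nul(k+1) \geq 2$ for $1 \leq k \leq r-2$, followed by a type-by-type computation of $\nul$, is essentially the paper's own route: the paper simply states that ``an explicit computation gives the following result on dimensions'' and defers to Mok's appendices, which carry out exactly this case-by-case verification. The one blemish is your final parenthetical: the expression $(p-k)+(q-k)-1$ is the type~I difference (which is always $\geq 3$ in the admissible range), whereas for type~III one has $\nul(k)-\nul(k+1)=n-k$, and that is the case attaining the borderline value $2$ (at $k=r-2$) --- notably the case relevant to the Siegel upper half-space.
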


Let $\Lsav$ be a smooth complex algebraic variety whose universal covering is isomorphic to $\Dom$. Let $\Latt \subset \Aut(\Dom)$ be its fundamental group. Let $0 \leq k \leq r$. Remark that $\Charac_k(\Dom)$ and $\Charac_k(\Dom)^{\reg}$ are stable under the action of $\Latt$. We denote $\Charac_k := \Latt \setminus \Charac_k(\Dom) \subset \QprB$ and $\Charac_k^{\reg} := \Latt \setminus \Charac_k(\Dom)^{\reg}$. We will call $\Charac_k$ a \emph{characteristic subvarieties} of $\Lsav$ (though they are subvarieties of $\QprB$). We will denote by $\Charac$ the characteristic subvariety $\Charac_{r-1}$.

\subsection{Algebraicity of characteristic subvarieties}

In this subsection, we prove that if $\Lsav$ is a smooth complex algebraic variety whose universal covering is isomorphic to $\Dom$, the characteristic subvarieties of $\Lsav$ are Zariski closed subsets of $\QprB$. We use notions from o-minimal geometry (see \cite{van1998tame}). We denote by $\Ralg$ the real o-minimal structure of semialgebraic sets and by $\Ranexp$ the real o-minimal structure generated by the globally subanalytic sets and by the exponential (see \cite{van1994real}).

\bigskip

Notice that $\Group, \Group^{\C}, \Dom$ and $\Dual$ have a canonical structure of $\Ralg$-definable manifold, for which the inclusion $\Group \subset \Group^{\C}$ and the actions of $\Group$ on $\Dom$ and of $\Group^{\C}$ on $\Dual$ are definable (\cite[Lemma 2.1]{bakker2020tame}). Moreover, the definable structure on $\Dom$ induces a $\Ralg$-definable structure on $\Fib$.

\begin{lem}
	For every $0 \leq k \leq r$, the subset $\Charac_k(\Dom) \subset \Fib$ is $\Ralg$-definable.
\end{lem}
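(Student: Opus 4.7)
The plan is to realize each stratum $\Charac_j(\Dom)^{\reg}$ as the image of a single $\Ralg$-definable map, and then express $\Charac_k(\Dom)$ as a finite union of such strata. The whole argument rests on the homogeneous description of the characteristic subvarieties recalled in Subsection \ref{subsection:char}.

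First, I would record the definability of the ingredients. By \cite[Lemma 2.1]{bakker2020tame} (cited just above the lemma), the complex adjoint group $\Group^\C$, the compact dual $\Dual = \Group^\C / \Pgroup$, and the open embedding $\Dom \hookrightarrow \Dual$ are all $\Ralg$-definable, and the $\Group^\C$-action on $\Dual$ is $\Ralg$-definable. Since the projectivized cotangent bundle $\Proj \Omega_{\Dual}^1$ of a smooth complex projective variety is itself algebraic and carries the natural lifted algebraic $\Group^\C$-action, the manifold $\Proj\Omega_\Dual^1$ and the action are $\Ralg$-definable. The open subset $\Fib = \Proj\Omega_\Dom^1$ is the restriction of this bundle over the $\Ralg$-definable open subset $\Dom \subset \Dual$, so $\Fib$ is an $\Ralg$-definable open subset of $\Proj\Omega_\Dual^1$.

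Second, I would use the orbit description. As recalled in Subsection \ref{subsection:char}, for each $1 \leq j \leq r$ the regular part $\Charac_j(\Dom)^{\reg}$ is the intersection with $\Fib$ of a single $\Group^\C$-orbit $\mathcal{O}_j \subset \Proj \Omega_{\Dual}^1$. Picking any basepoint $y_j \in \mathcal{O}_j$, the orbit map
\[
\application{\phi_j}{\Group^\C}{\Proj\Omega_\Dual^1}{g}{g \cdot y_j}
\]
is $\Ralg$-definable by the previous step. Its image $\mathcal{O}_j = \phi_j(\Group^\C)$ is therefore $\Ralg$-definable, and hence so is
\[
\Charac_j(\Dom)^{\reg} = \mathcal{O}_j \cap \Fib.
\]

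Third, I would conclude by a finite union: with $\Charac_0(\Dom) = \emptyset$, one has tautologically
\[
\Charac_k(\Dom) = \bigcup_{j=1}^{k} \Charac_j(\Dom)^{\reg},
\]
which is a finite union of $\Ralg$-definable sets, hence $\Ralg$-definable.

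I do not expect a real obstacle here: the only non-formal input is the description of $\Charac_j(\Dom)^{\reg}$ as the trace on $\Fib$ of a single $\Group^\C$-orbit in $\Proj\Omega_\Dual^1$, which has already been recorded from \cite{mok2002characterization}; everything else is standard preservation of $\Ralg$-definability under images of definable maps, intersections, and finite unions.
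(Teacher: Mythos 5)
Your argument is correct and is essentially the paper's own proof: both reduce the claim to the fact that $\Charac_k(\Dom)$ is the trace on $\Fib$ of a finite union of $\Group^{\C}$-orbits in $\Proj\Omega_{\Dual}^1$, with all the relevant maps $\Ralg$-definable. The only cosmetic difference is that the paper justifies the definability of the embedding $\Dom\hookrightarrow\Dual$ (and hence of $\Fib\to\Proj\Omega_{\Dual}^1$) via the commutative square of orbit maps for $\Group\subset\Group^{\C}$, rather than quoting it directly, and phrases the conclusion as an inverse image rather than an intersection.
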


\begin{proof}
	Choose a base point $x \in \Dom$. The actions of $\Group$ and $\Group^\C$ on $x \in \Dual$ gives the following commutative square.
	\begin{center}
		\begin{tikzcd}
		\Group \arrow[r, hook] \arrow[d, two heads] & \Group^{\C} \arrow[d, two heads] \\
		\Dom \arrow[r, hook]                        & \Dual                           
		\end{tikzcd}
	\end{center}
	The top, left and right maps of the square are $\Ralg$-definable. Hence, the bottom map is definable too. Thus the induced map $\Fib \to \Proj \Omega_{\Dual}^1$ is $\Ralg$-definable. In consequences, $\Charac_k(\Dom)$ is definable because it is the inverse image by a definable map of a finite union of orbits under the definable action of $\Group^{\C}$ over $\Proj \Omega_{\Dual}^1$.
\end{proof}

\begin{prop}\label{prop:alg}
	For every $0 \leq k \leq r$, $\Charac_k$ is a closed algebraic subvariety of $\QprB$.
\end{prop}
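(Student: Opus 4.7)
The plan is to combine the $\Ralg$-definability of $\Charac_k(\Dom)$ established in the preceding lemma with an o-minimal Chow / GAGA theorem, after reducing to the locally symmetric case. First, by Proposition \ref{prop:lsav} fix a finite étale cover $f: \Lsav' \to \Lsav$ with $\Lsav' = \Latt \setminus \Dom$ locally symmetric. The induced map $f_\Proj: \Proj \Omega_{\Lsav'}^1 \to \QprB$ is finite, hence proper and algebraic. Because characteristic subvarieties are defined pointwise from the notion of rank, which is $\Aut(\Dom)$-invariant, one has $f_\Proj(\Charac_k(\Lsav')) = \Charac_k$, and the image of a closed algebraic subvariety under a finite morphism is a closed algebraic subvariety; so it suffices to treat $\Lsav = \Latt \setminus \Dom$.

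Next I would equip the uniformization with an $\Ranexp$-definable structure. If $r = 1$ the statement is trivial because $\Charac_0 = \emptyset$ and $\Charac_1 = \QprB$, so I may assume $r \geq 2$, in which case $\Latt$ is arithmetic by Margulis superrigidity. The quasiprojective algebraic structure on $\Lsav$ supplied by Proposition \ref{prop:uniclsav} determines canonical $\Ranexp$-definable structures on $\Lsav$ and on $\QprB$. The work of Peterzil--Starchenko on $\Ag$, extended to arithmetic locally symmetric varieties by Klingler--Ullmo--Yafaev, furnishes a Siegel fundamental set $F \subset \Dom$ for the $\Latt$-action such that the restriction of the uniformization $\Dom \to \Lsav$ to $F$ is $\Ranexp$-definable. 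Lifting to projectivized cotangent bundles, the induced map $\Fib \to \QprB$ restricts to an $\Ranexp$-definable map on the preimage $F_\Proj \subset \Fib$ of $F$ under $\pi$.

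The remainder is a direct application of definable Chow. The preceding lemma makes $\Charac_k(\Dom) \cap F_\Proj$ an $\Ranexp$-definable subset of $F_\Proj$, and its image in $\QprB$ is exactly $\Charac_k$, which is therefore $\Ranexp$-definable. Moreover $\Charac_k$ is closed and complex analytic in $\QprB$, since $\Charac_k(\Dom)$ is closed and analytic in $\Fib$, $\Latt$ acts properly discontinuously on $\Fib$, and the projection $\Fib \to \QprB$ is a local biholomorphism. The definable GAGA / Chow theorem of Peterzil--Starchenko, extended to the quasiprojective setting by Bakker--Brunebarbe--Tsimerman, then asserts that any closed $\Ranexp$-definable complex analytic subset of a smooth complex algebraic variety is Zariski closed; this yields the algebraicity of $\Charac_k$. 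The hardest step is the second one: the $\Ranexp$-definability of the uniformization on a fundamental set relies on arithmetic reduction theory and is precisely what the cited o-minimal works provide; everything else is routine bookkeeping.
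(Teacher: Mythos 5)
Your argument is correct and follows the same route as the paper: reduce to the locally symmetric (and, for rank $\geq 2$, arithmetic) case, use the $\Ranexp$-definability of the uniformization restricted to a definable fundamental set together with the $\Ralg$-definability of $\Charac_k(\Dom)$ to conclude that $\Charac_k$ is a definable closed analytic subset, and finish with the definable Chow theorem. The extra details you supply (descent along the finite étale cover, closedness of $\Charac_k$ via proper discontinuity) are correct and merely make explicit what the paper leaves implicit.
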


\begin{proof}
	Using Proposition \ref{prop:lsav}, we can suppose that $\Lsav$ is a locally symmetric variety. Let $\Latt \subset \Group$ be its fundamental group. If $\Dom$ has at most rank $1$, the proposition is obvious. Then suppose the rank of $\Dom$ is at least $2$. First \cite[Theorem 1.9]{klingler2016hyperbolic} gives us a $\Ralg$-definable fundamental set $F$ of $\Dom$ for the action of $\Latt$ such that the surjection $F \to \Lsav$ is $\Ranexp$-definable. Let $F'$ be the inverse image of $F$ in $\Fib$, then the surjection $F' \to \QprB$ is $\Ranexp$-definable too. Moreover $F' \cap \Charac_k(\Dom)$ is $\Ralg$-definable, and its image in $\QprB$ is $\Charac_k$. Hence $\Charac_k$ is $\Ranexp$-definable. Moreover $\Charac_k$ is a closed analytic subset, hence the definable Chow theorem \cite[Corollary 4.5.]{peterzil2009complex} gives the proposition.
\end{proof}

Alternatively, we can prove the algebraicity of $\Charac_k$ using its characterization in \cite[Definition 3.1 \& Theorem 3.3]{sheng2010polarized}.

\section{Growth moderation near singularities}\label{section:chernweil}

To prove Theorem \ref{thm:main}, we will need a theorem of Kollár \cite[Theorem 5.20]{kollar1987subadditivity}. In this section, we rewrite and sometime change the definitions, statements and proofs of \cite[Section 5]{kollar1987subadditivity}, first to complete some arguments that we were not able to follow (in particular the proof of \cite[Theorem 5.20]{kollar1987subadditivity} seems incomplete), secondly because we will use in Section \ref{section:main} notions of growth moderation for differential forms near a normal crossing divisor closed to the ones introduced by Kollár.

\bigskip

The differences between this section and \cite[Section 5]{kollar1987subadditivity} are the following. First, we change the definition of ``almost bounded'' so that almost bounded forms form an algebra, that is used to prove \cite[Theorem 5.20]{kollar1987subadditivity} (Theorem \ref{thm:chern}). Secondly, we detail the proof of \cite[Proposition 5.16.(ii)]{kollar1987subadditivity} (Proposition \ref{prop:int}.2). Thirdly, we introduce the notion of ``quasipositive'' to justify the integrability in \cite[Corollary 5.17]{kollar1987subadditivity} (Proposition \ref{prop:intnul}). Finally, we allow a constant $\Const$ in the definition of ``nearly bounded'' and we introduce the notion of ``(almost) quasibounded''. This last point is not useful for the study of PVHS, but will be important in Section \ref{section:main}.

\bigskip

Let $r \in (0, e^{-1})$. We consider the disk and the punctured disk of radius $r$:
\[\Disr := \{z \in \C, \lvert z \rvert < r\},\]
\[\Pdis := \Disr \setminus \{0\}.\]
We denote $\Dipo$ the divisor $\{z_1\dots z_\di = 0\}$ of $\Disr^\di$. For $1 \leq i, j \leq \di, i \neq j$ and $\Const, \epsilon \in \R_{>0}$, we denote:
\[\Thin_{i, j}^{\Const,\epsilon} := \{ z \in \Ppdi, \lvert z_j \rvert \leq \exp(- \Const \lvert z_i \rvert^{- \epsilon}) \},\] \[\Thin_i^{\Const, \epsilon} := \bigcup_{j \neq i} \Thin_{i, j}^{\Const,\epsilon}.\]
For $0 < \epsilon < \epsilon'$ and $0 < \Const < \Const'$, we have $\Thin_{i, j}^{\Const',\epsilon'} \subset \Thin_{i, j}^{\Const,\epsilon}$ and $\Thin_i^{\Const',\epsilon'} \subset \Thin_i^{\Const,\epsilon}$.

\subsection{Functions with moderate growth}

\begin{defi}\label{defi:nb}
	Let $f$ be a smooth complex-valued function over the punctured polydisk $\Ppdi$.
	\begin{itemize}
		\item $f$ has \emph{at most log growth} if there exist $A \in \R_{\geq 0}$ and $k \in \N$ such that for every $z = (z_i) \in \Ppdi$:
		\[\lvert f(z) \rvert \leq A \prod_{i=1}^\di (- \ln \lvert z_i \rvert)^k.\]
		\item $f$ has \emph{at most log-log growth} if there is $B \in \R_{\geq 0}$ such that for every $z = (z_i) \in \Ppdi$:
		\[\lvert f(z) \rvert \leq B \sum_{i=1}^\di \ln(-\ln \lvert z_i \rvert).\]
		\item For $\Const, \epsilon \in \R_{>0}$, $D \in \R_{\geq 0}$ and $p \in \N$, $f$ is $(\Const, \epsilon, D, p)$-\emph{nearly bounded} if for each $z \in \Ppdi$ one of the following cases occurs:
		\begin{itemize}
			\item $\lvert f(z) \rvert \leq D$,
			\item there exists $1 \leq i \leq \di$ such that $z \in \Thin_i^{\Const,\epsilon}$ and $\lvert f(z) \rvert \leq D (- \ln \lvert z_i \rvert)^p$.
		\end{itemize}
		\item $f$ is \emph{nearly bounded} if there exist $\Const, \epsilon \in \R_{>0}$, $D \in \R_{\geq 0}$ and $p \in \N$ such that $f$ is $(\Const, \epsilon, D, p)$-nearly bounded.
		\item $f$ is \emph{quasibounded} if it is the product of a function with at most log-log growth by a nearly bounded function.
		\item $f$ is \emph{integrable} if it is integrable for the Lebesgue measure.
		\item $f$ is \emph{quasipositive} if it is the sum of a nonnegative real-valued function and an integrable function.
	\end{itemize}
\end{defi}

The functions that are continuous on $\Disr^\di$ has at most log-log growth and are nearly bounded. Functions with at most log-log growth and nearly bounded functions are quasibounded. Quasibounded functions have at most log growth and are integrable. Integrable functions are quasipositive. If $f$ has values in $\R_{>0}$ and $f$ and $f^{-1}$ have at most log growth, then $\ln \circ f$ have at most log-log growth. The following proposition is an easy consequence of the definitions.

\begin{prop}\label{prop:algebra}
	Nearly bounded functions form a complex algebra. Quasibounded functions form a module over this algebra.
\end{prop}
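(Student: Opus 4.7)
The proposition has two parts: (1) the nearly bounded functions form a complex algebra, i.e., are closed under sums and products (scalar multiplication is immediate), and (2) the quasibounded functions are closed under sums and under multiplication by nearly bounded functions.

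I would start by observing a monotonicity of parameters: if $f$ is $(\Const, \epsilon, D, p)$-nearly bounded and $\Const' \leq \Const$, $\epsilon' \leq \epsilon$, $D' \geq D$, $p' \geq p$, then $f$ is $(\Const', \epsilon', D', p')$-nearly bounded. This uses the inclusion $\Thin_i^{\Const, \epsilon} \subset \Thin_i^{\Const', \epsilon'}$ noted just after the definition of $\Thin$, together with the inequality $(-\ln|z_i|)^p \leq (-\ln|z_i|)^{p'}$, valid because $r < e^{-1}$ forces $-\ln|z_i| > 1$. Consequently, any two nearly bounded functions may be assumed to be $(\Const, \epsilon, D, p)$-nearly bounded with the same parameters.

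For part (1), take $f, g$ both $(\Const, \epsilon, D, p)$-nearly bounded and analyze $f+g$ and $fg$ pointwise. At each $z \in \Ppdi$, each function satisfies one of the two branches of the definition. If both satisfy the bounded branch, then $|(f+g)(z)| \leq 2D$ and $|(fg)(z)| \leq D^2$. If only one is in a thin-set branch, the bound $D + D(-\ln|z_j|)^p \leq 2D(-\ln|z_j|)^p$ (using $-\ln|z_j| > 1$) handles the sum, and similarly for the product. The crucial case is when both bounds are witnessed by thin sets with possibly distinct indices $i$ for $f$ and $j$ for $g$, so that $z \in \Thin_i^{\Const,\epsilon} \cap \Thin_j^{\Const,\epsilon}$. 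The key trick is to pick $k \in \{i, j\}$ with $|z_k|$ minimal: then $z \in \Thin_k^{\Const,\epsilon}$ and $(-\ln|z_i|)^p, (-\ln|z_j|)^p \leq (-\ln|z_k|)^p$, yielding $|(f+g)(z)| \leq 2D(-\ln|z_k|)^p$ and $|(fg)(z)| \leq D^2(-\ln|z_k|)^{2p}$. Hence $f+g$ is $(\Const, \epsilon, 2D, p)$-nearly bounded and $fg$ is $(\Const, \epsilon, D^2, 2p)$-nearly bounded.

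For part (2), multiplication is immediate from (1): if $f = h \cdot n$ is quasibounded and $m$ is nearly bounded, then $fm = h \cdot (nm)$ with $nm$ nearly bounded. For addition of $f = h_1 n_1$ and $g = h_2 n_2$, the natural estimate $|f+g| \leq (|h_1| + |h_2|)(|n_1| + |n_2|)$ exhibits $f+g$ as controlled by a product of a log-log-growth function and a nearly bounded function, using closure of log-log growth under sums of magnitudes and the algebra statement for nearly bounded functions. The main technical obstacle, in my view, is passing from this upper bound to an honest factorization $f + g = \tilde{h} \cdot \tilde{n}$ of the required form with $\tilde{h}, \tilde{n}$ smooth; a candidate such as $\tilde h = h_1 + h_2$, $\tilde n = (h_1 n_1 + h_2 n_2)/\tilde h$ requires care on the vanishing locus of $\tilde h$ and does not immediately yield a clean bound on $|\tilde n|$. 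I expect this formal difficulty to be sidestepped either by reading quasiboundedness as a magnitude condition (consistent with the way the preamble presents continuous-on-$\Disr^\di$ functions, log-log growth, and near boundedness) or by taking finite sums of products $h_i n_i$, both interpretations making the module structure transparent.
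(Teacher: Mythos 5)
Your argument for the algebra structure is correct and is presumably the intended ``easy consequence of the definitions'' (the paper gives no proof): the parameter monotonicity, the reduction to common $(\Const,\epsilon,D,p)$, and the observation that when $z$ lies in two thin sets $\Thin_i^{\Const,\epsilon}\cap\Thin_j^{\Const,\epsilon}$ one may keep whichever index maximizes $-\ln\lvert z_k\rvert$ all check out, since $z$ already belongs to $\Thin_k^{\Const,\epsilon}$ for that $k$ and $-\ln\lvert z_i\rvert>1$ on $\Ppdi$ because $r<e^{-1}$. The one loose end is the closure of quasibounded functions under addition, which you correctly flag but leave unresolved; it does not require reinterpreting the definition. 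Note that near boundedness and log-log growth are both conditions on $\lvert f\rvert$ stable under pointwise domination, and that $\eta(z):=\sum_{i=1}^{\di}\ln(-\ln\lvert z_i\rvert)$ is smooth, has at most log-log growth, and is bounded below by the positive constant $\di\ln(-\ln r)$ on $\Ppdi$. Given $f=h_1n_1$ and $g=h_2n_2$ with $\lvert h_i\rvert\leq B_i\eta$, one has $\lvert (f+g)/\eta\rvert\leq B_1\lvert n_1\rvert+B_2\lvert n_2\rvert$, which your pointwise case analysis shows is $(\Const,\epsilon,(B_1+B_2)D,p)$-nearly bounded; hence $f+g=\eta\cdot\bigl((f+g)/\eta\bigr)$ is an honest factorization of the required form, and your candidate $\tilde h=h_1+h_2$ (with its vanishing locus problem) is not needed.
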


\subsection{Differential forms with moderate growth on the punctured polydisk}

We denote by $\Cinf(\Ppdi)$ the ring of smooth complex-valued functions over $\Ppdi$. We denote by $\Smofo(\Ppdi)$ the graded $\Cinf(\Ppdi)$-algebra of smooth complex differential forms over $\Ppdi$. The \emph{Poincaré basis} is the $\Cinf(\Ppdi)$-basis of $\Smofo(\Ppdi)$ whose elements are the ordered wedge products of elements of $\left (\frac{\dif z_1}{\lvert z_1 \rvert (- \ln \lvert z_1 \rvert)}, \dots, \frac{\dif z_\di}{\lvert z_\di \rvert (- \ln \lvert z_\di \rvert)}, \frac{\dif \bar{z_1}}{\lvert z_1 \rvert (- \ln \lvert z_1 \rvert)}, \dots, \frac{\dif \bar{z_\di}}{\lvert z_\di \rvert (- \ln \lvert z_\di \rvert)} \right )$.

\begin{defi}
	A form of $\Smofo(\Ppdi)$ is \emph{nearly bounded} (resp. \emph{quasibounded}) if its coefficients in the Poincaré basis are nearly bounded (resp. quasibounded) functions.
\end{defi}

As a corollary of Proposition \ref{prop:algebra}, nearly bounded forms form a graded complex subalgebra of $\Smofo(\Ppdi)$.

\begin{defi}
	A form of $\Smofo(\Ppdi)$ of degree $2 \di$ is \emph{integrable} (resp. \emph{semipositive}, \emph{quasipositive}) if it is the product of an integrable (resp. nonnegative real-valued, quasipositive) function by $\im \dif z_1 \wedge \dif \bar{z_1} \wedge \dots \wedge \im \dif z_\di \wedge \dif \bar{z_\di}$.
\end{defi}

The integral of a quasipositive form of degree $2 \di$ is a well defined element of $\C \cup \{+\infty\}$. It is finite if and only if the form is integrable.

\begin{prop}\label{prop:int}
	\begin{enumerate}
		\item A quasibounded form of degree $2\di$ is integrable.
		\item If a quasibounded form $\alpha$ has degree $2\di-1$, is supported on a compact of $\Disr^\di$ and $\dif \alpha$ is quasipositive, then $\dif \alpha$ is integrable and $\int_{\Ppdi} \dif \alpha = 0$.
	\end{enumerate}
\end{prop}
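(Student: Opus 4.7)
The plan is to deduce (1) from a direct estimate in the Poincaré basis, and to deduce (2) from Stokes's theorem applied on an exhausting family of subsets, combined with the monotone and dominated convergence theorems.

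For (1), I would write $\omega = f \cdot \omega_0$, where $\omega_0 := \prod_{i=1}^{\di} \frac{\dif z_i \wedge \dif \bar z_i}{|z_i|^2 (-\ln|z_i|)^2}$ is the unique top-degree Poincaré basis element and $f$ is quasibounded, then switch to the variables $u_i := -\ln|z_i|$ and $\theta_i := \arg z_i$. Up to signs and constants, $|\omega|$ integrates against $\prod_i \frac{\dif u_i\, \dif \theta_i}{u_i^2}$ on $(u_*, \infty)^{\di} \times [0, 2\pi)^{\di}$, with $u_* := -\ln r > 1$. Writing $f = gh$ with $g$ of at most log-log growth and $h$ nearly bounded, outside every thin region $|f| \lesssim \sum_k \ln u_k$, and the integral converges via $\int_{u_*}^\infty \frac{\ln u}{u^2}\, \dif u < \infty$. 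Inside a thin region $\Thin_{i,j}^{\Const,\epsilon}$ the constraint $u_j \geq \Const e^{\epsilon u_i}$ gives $\int_{\Const e^{\epsilon u_i}}^\infty \frac{\ln u_j}{u_j^2}\, \dif u_j = O(u_i e^{-\epsilon u_i})$, which absorbs the polynomial factor $u_i^{p_0}$ coming from the nearly-bounded bound on $h$.

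For (2), I would apply Stokes's theorem on the exhaustion $U_\eta := \{z \in \Ppdi : |z_i| \geq \eta \text{ for every } i\}$ and let $\eta \to 0$. Since $\alpha$ is compactly supported in $\Disr^{\di}$, the only relevant boundary components are the slices $\partial_i U_\eta := \{|z_i| = \eta,\ |z_j| \geq \eta \text{ for } j \neq i\}$. Parameterising $\partial_i U_\eta$ by $z_i = \eta e^{\im \theta_i}$, terms of $\alpha$ containing both $\dif z_i$ and $\dif \bar z_i$ pull back to zero, while surviving terms acquire a factor of size $1/|\ln \eta|$ from the unique remaining $z_i$-Poincaré factor. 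Setting $u := -\ln \eta$, the key observation is that on $U_\eta$ every $u_j \leq u$; hence inside a thin region $\Thin_k^{\Const,\epsilon}$ the inequality $u_j \geq \Const e^{\epsilon u_k}$ forces $u_k = O(\ln u)$, so the quasibounded coefficient of a surviving term is bounded on $U_\eta$ by $O((\ln u)^{p_0 + 1})$. Combined with the factor $1/u$ from the slice pullback and the bounded iterated integral $\prod_{j \neq i} \int_{u_*}^u \frac{\dif u_j}{u_j^2} = O(1)$, this yields $\int_{\partial_i U_\eta} |\alpha| = O\bigl((\ln u)^{p_0 + 1}/u\bigr) \to 0$.

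To conclude, I would decompose $\dif \alpha = P + Q$ with $P \geq 0$ real-valued and $Q$ Lebesgue-integrable, which is possible by quasipositivity. Monotone convergence gives $\int_{U_\eta} P \uparrow \int_{\Ppdi} P$, dominated convergence gives $\int_{U_\eta} Q \to \int_{\Ppdi} Q$, and Stokes forces $\int_{U_\eta} \dif \alpha \to 0$; whence $\int_{\Ppdi} P + \int_{\Ppdi} Q = 0$, so $\int_{\Ppdi} P < +\infty$, $\dif \alpha$ is integrable and $\int_{\Ppdi} \dif \alpha = 0$. The main obstacle is the coefficient estimate in step (2): one must leverage the upper bound $u_j \leq u$ on $U_\eta$ to replace the polynomial growth $u_k^{p_0}$ allowed a priori by near-boundedness with the much milder $(\ln u)^{p_0}$, so that the $1/u$ gain from the slice pullback genuinely dominates.
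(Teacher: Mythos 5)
Your proof of (1) is essentially the paper's: a direct estimate of the coefficient against the Poincaré volume form, splitting $\Ppdi$ into the complement of the thin sets (where near-boundedness gives a constant and the log-log factor is integrable) and the sets $\Thin_{i,j}^{\Const,\epsilon}$ (where the constraint $-\ln\lvert z_j\rvert\geq\Const\lvert z_i\rvert^{-\epsilon}$ makes the $z_j$-integral decay exponentially in $u_i=-\ln\lvert z_i\rvert$ and absorb the polynomial factor $u_i^{p}$); passing to the coordinates $u_i,\theta_i$ is only a cosmetic difference. For (2), however, you take a genuinely different route. The paper introduces $\eta(z)=\sum_i\ln(-\ln\lvert z_i\rvert)$, shows that its gradient flow for the Poincaré metric is complete and trivializes $(\Disk^*)^\di$ as $\R\times U_0$ over the level sets $U_t=\{\eta=t\}$, and deduces $\int_{U_t}\alpha\to0$ from the integrability of $\dif\eta\wedge\alpha$ (quasibounded, hence integrable by (1)) via Fubini along the flow; no pointwise estimate on the slices is ever made. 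You instead take the naive exhaustion $\{\lvert z_i\rvert\geq\eta\}$ and bound $\int_{\partial_iU_\eta}\lvert\alpha\rvert$ by hand. The observation that makes this work, which you correctly isolate, is that on this exhaustion the thin-set inequality $u_j\geq\Const e^{\epsilon u_k}$ combined with $u_j\leq u:=-\ln\eta$ forces $u_k=O(\ln u)$, so the a priori polynomial bound $u_k^{p}$ on the nearly bounded coefficients collapses to $O((\ln u)^{p})$, which the factor $1/u$ coming from the single surviving $z_i$-Poincaré factor on the slice dominates. Your argument is more elementary (no flow, no coarea-type identity) at the price of an explicit boundary estimate; the paper's recycles part (1) and avoids estimating the slices, at the price of the completeness and trivialization lemma for the gradient flow. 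The concluding limit argument (monotone convergence on the nonnegative part of the quasipositive form, dominated convergence on its integrable part, then Stokes) is the same in substance in both proofs, and your version is correct.
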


\begin{proof}
	\begin{enumerate}
		\item We have to check that if $g$ is a nearly bounded function, then
		\[\ln(- \ln \lvert z_1 \rvert) g(z) \prod_{i=1}^\di \lvert z_i \rvert^{-2} (- \ln \rvert z_i \lvert)^{-2}\]
		is integrable.
		
		Let $\Const, \epsilon \in \R_{>0}, D \in \R_{\geq 0}$ and $p \in \N$ be such that $g$ is $(\Const, \epsilon, D, p)$-nearly bounded. As $\lvert z \rvert^{-2} (- \ln \rvert z \lvert)^{-2}$ and $\lvert z \rvert^{-2} (- \ln \rvert z \lvert)^{-2} \ln(- \ln \lvert z \rvert)$ are integrable over $\Disr$, $\alpha$ is integrable on $\Disr^\di \setminus \bigcup_i \Thin_i^{\Const,\epsilon}$.
		
		Moreover, for $1 \leq i, j \leq \di, i \neq j, i \neq 1, j \neq 1$:
		\begin{align*}
		&\int_{\Thin_{i, j}^{\Const, \epsilon}} \ln(-\ln \lvert z_1 \rvert) (- \ln \lvert z_i \rvert)^p \prod_{k=1}^\di \lvert z_k \rvert^{-2} (- \ln \rvert z_k \lvert)^{-2} \dif z_1 \wedge \dif \bar{z_1} \wedge \dots \wedge \dif z_\di \wedge \dif \bar{z_\di} \\
		= & E \int_{\Disr} \min\{(-\ln r)^{-1}, \Const^{-1} \lvert z_i \rvert^\epsilon\} \lvert z_i \rvert^{-2} (- \ln \lvert z_i \rvert)^{p-2} \dif z_i \wedge \dif \bar{z_i} \\
		\leq &E C^{-1} \int_{\Disr} \lvert z_i \rvert^{-(2-\epsilon)} (- \ln \lvert z_i \rvert)^p \dif z_i \wedge \dif \bar{z_i} \\
		< & +\infty
		\end{align*}
		where $E \in \R_{>0}$ is a constant. The case $i = 1$ simply add a factor $\ln(-\ln \lvert z_i \rvert)$ to the last integral, that does not change the integrability.
		
		Finally, we have to treat the case $j=1$. Let $1 < i \leq \di$:
		\begin{align*}
		&\int_{\Thin_{i, 1}^{\Const, \epsilon}} \ln(-\ln \lvert z_1 \rvert) (- \ln \lvert z_i \rvert)^p \prod_{k=1}^\di \lvert z_k \rvert^{-2} (- \ln \rvert z_k \lvert)^{-2} \dif z_1 \wedge \dif \bar{z_1} \wedge \dots \wedge \dif z_\di \wedge \dif \bar{z_\di} \\
		= & F \int_{\Disr} \min\{(-\ln r)^{-1} (1 + \ln(-\ln r)), (\Const^{-1} \lvert z_i \rvert^\epsilon) (1 + \ln(\Const \lvert z_i \rvert^{-\epsilon}))\} \\
		& \lvert z_i \rvert^{-2} (- \ln \lvert z_i \rvert)^{p-2} \dif z_i \wedge \dif \bar{z_i} \\
		\leq & FC^{-1} \int_{\Disr} \lvert z_i \rvert^{-(2-\epsilon)} (- \ln \lvert z_i \rvert)^{p-2} (1 + \ln(\Const) + \epsilon (-\ln(\lvert z_i \rvert))) \dif z_i \wedge \dif \bar{z_i} \\
		< & +\infty.
		\end{align*}
		
		\item Let $\Disk$ be the disk of radius $1$, and $\Disk^*$ the punctured disk of radius $1$. Consider the following function:
		\[\application{\eta}{(\Disk^*)^\di}{\R}{z}{\sum_{i=1}^\di \ln(-\ln \lvert z_i \rvert)}\]
		with differential:
		\[\omega := \dif \eta = \sum_{i=1}^\di \frac{\dif \lvert z_i \rvert}{\lvert z_i \rvert \ln \lvert z_i \rvert}. \]
		Consider the Poincaré Hermitian metric over $(\Disk^*)^\di$ given by:
		\[\sum_{i=1}^\di \frac{\lvert \dif z_i \rvert^2}{\lvert z_i \rvert^2 (\ln \lvert z_i \rvert)^2}. \]
		We set $v$ the real vector field over $(\Disk^*)^\di$ which is the dual of $\omega$ with respect to the Poincaré metric. Then $v$ is complete. Indeed, as its norm is constant (of value $\sqrt{d}$ at every point), integral curves of $v$ with bounded domain are bounded for the Poincaré metric. As this one is complete, an integral curve with bounded domain is relatively compact. Hence maximal integral curves are defined on $\R$.
		
		
		Consider an integral curve $x: \R \to (\Disk^*)^\di$ for $v$. Then, for $t \in \R$:
		\[(\eta \circ x)'(t) = \omega(v(x(t))) = d.\]
		Hence $\eta \circ x: \R \to \R$ is affine, in particular it is a diffeomorphism.
		
		For $t \in \R$, let $U_t := \{\eta = t\}$. We just proved that the flow of $v$ gives a diffeomorphism $\phi: \R \times U_0 \cong (\Disk^*)^\di$, with $\phi(\{t\} \times U_0) = U_t$.
		
		Now, consider the homothety $h:\Disk^\di \to \Disr^\di$ of rate $r$. Then $h_* \omega$ has Poincaré growth. Recall that $\alpha$ is a quasibounded form of degree $2\di-1$ supported on a compact of $\Disr^\di$ such that $\dif \alpha$ is quasipositive. Hence, thanks to Proposition \ref{prop:algebra}, $h_* \omega \wedge \alpha$ is quasibounded. Thanks to the first point, it is integrable. Hence, if we denote $\alpha' := h^*\alpha$, $\omega \wedge \alpha'$ is integrable. Moreover, $\phi^*\omega = \phi^*\dif \eta = \dif \phi^* \eta = \dif t$. Hence:
		\begin{equation}\label{eq:finite}
		\int_0^{+\infty} \left ( \int_{U_t} \alpha' \right ) \dif t = \int_{(\Disk^*)^\di} \omega \wedge \alpha'
		< +\infty.
		\end{equation}
		Moreover, if we denote $Z_t := \bigsqcup_{s<t} U_s$:
		\begin{equation}
		\int_{(\Disk^*)^\di} \dif \alpha' = \lim_{t \to +\infty} \int_{Z_t} \dif \alpha' = \lim_{t \to +\infty} \int_{U_t} \alpha'.
		\end{equation}
		and (\ref{eq:finite}) implies $\lim_{t \to +\infty} \int_{U_t} \alpha' = 0$.
	\end{enumerate}
\end{proof}

\subsection{Differential forms with moderate growth on a complex manifold}

Let $\Anman$ be an complex manifold. Let $\Div$ be a normal crossing divisor of $\Anman$. Let us extend the notions of the previous subsection to this case.

\begin{defi}\label{defi:almbo}
	\begin{itemize}
		\item An \emph{admissible chart} on $(\Anman, \Div)$ is a holomorphic open embedding $f: \Disr^\di \to \Anman$ such that $f^{-1}(\Div) \subset \Dipo$.
		\item A smooth differential form $\alpha$ over $\Anman \setminus \Div$ is \emph{locally nearly bounded} (resp. \emph{locally quasibounded}) if for each $r \in (0, e^{-1})$ and each admissible chart $\Disr^\di \to \Anman$ on $(\Anman, \Div)$, there exists $r' \in (0,r)$ such that the pull-back of $\alpha$ on $(\Disk_{r'}^*)^\di$ is nearly bounded (resp. quasibounded).
		\item A \emph{permissible blow-up} on $(\Anman, \Div)$ is the data of a complex manifold $\Anman'$, a normal crossing divisor $\Div'$ and a blow-up $b: \Anman' \to \Anman$ of center $Z \subset \Anman$ such that $b^{-1}(\Div) = \Div'$ and for each $x \in Z$, there is an admissible chart $f: \Disr^\di \to (\Anman, \Div)$ such that $f(0) = x$ and $f^{-1}(Z)$ is the intersection of some of the $\{ z_i = 0 \}$.
		\item Let $E \subset \Anman$ be a divisor. A \emph{desingularization} of $E$ in $(\Anman, \Div)$ is a composition of permissible blow-ups $g: (\Anman', \Div') \to (\Anman, \Div)$ such that $g^{-1}(E) \cup \Div'$ has normal crossings.
		\item A smooth differential form $\alpha$ on $\Ppdi$ is \emph{pre-almost (quasi)bounded} if there exists a divisor $E \subset \Disr^\di$ such that the pull-back of $\alpha$ by every desingularization of $E$ in $(\Disr^\di, \Dipo)$ is locally nearly bounded (resp. locally quasibounded).
		\item A smooth differential form $\alpha$ on $\Anman \setminus \Div$ is \emph{almost (quasi)bounded} if for each $r \in (0, e^{-1})$ and for each admissible chart $f: \Disr^\di \to (\Anman, \Div)$, there exists $r' \in (0, r)$ such that the restriction of $f^*\alpha$ on $\Disk_{r'}^\di$ is pre-almost (quasi)bounded.
		\item A smooth differential form $\alpha$ of degree $2 \di$ on $\Anman \setminus \Div$ is \emph{locally integrable} (resp. \emph{semipositive}, \emph{locally quasipositive}), if for every $x \in \Anman$, there is an admissible chart $f$ on $(\Anman, \Div)$ such that $f(x) = 0$ and $f^*\alpha$ is integrable (resp. semipositive, quasipositive).
	\end{itemize}
\end{defi}

The integral of a locally quasipositive form of degree $2 \di$ that is supported on a compact of $\Anman$ is a well defined element of $\C \cup \{+\infty\}$. It is finite if and only if the form is locally integrable. Thanks to \cite[Theorem 1.1.9]{temkin2018functorial}, every divisor has a desingularization in $(\Anman, \Div)$.

\begin{prop}\label{prop:algebra2}
	Almost bounded forms over $\Anman \setminus \Div$ form a complex graded algebra. Almost quasibounded forms over $\Anman \setminus \Div$ form a graded module over this graded algebra.
\end{prop}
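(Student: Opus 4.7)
My plan is to reduce everything to Proposition \ref{prop:algebra} on functions, promoted first to forms via the Poincaré basis, then to the ``almost'' setting by choosing a common desingularization, and finally globalized via admissible charts.

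The first step is to promote Proposition \ref{prop:algebra} from functions to forms on $\Ppdi$. Since wedge products of Poincaré basis elements are, up to sign, again Poincaré basis elements or zero, the coefficients in the Poincaré basis of a wedge product of two forms are finite sums of products of the coefficients of the factors. Consequently nearly bounded forms are a graded $\C$-algebra and quasibounded forms are a graded module over them, and transporting through admissible charts extends this to locally nearly bounded and locally quasibounded forms on $\Anman \setminus \Div$.

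Next I would handle pre-almost (quasi)boundedness on $\Ppdi$. Given $\alpha,\beta$ pre-almost bounded with witnessing divisors $E_\alpha, E_\beta$, I claim $E := E_\alpha \cup E_\beta$ is a witness for $\alpha+\beta$ and $\alpha\wedge\beta$. The crucial point, which I expect to be the main technical step, is that any desingularization $g : (\Anman', \Div') \to (\Disr^\di, \Dipo)$ of $E$ is automatically a desingularization of $E_\alpha$ and of $E_\beta$: indeed $g^{-1}(E_\alpha) \cup \Div'$ is a subdivisor of $g^{-1}(E) \cup \Div'$, which has normal crossings, hence so does $g^{-1}(E_\alpha) \cup \Div'$ (and similarly for $E_\beta$). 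Therefore $g^*\alpha$ and $g^*\beta$ are both locally nearly bounded, and the first step gives that their sum and wedge product are locally nearly bounded. The module statement is identical: after pulling back by $g$, a product of a pre-almost quasibounded and a pre-almost bounded form becomes a product of a locally quasibounded and a locally nearly bounded form.

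Finally, to globalize to $\Anman \setminus \Div$ I fix $r \in (0, e^{-1})$ and an admissible chart $f: \Disr^\di \to (\Anman, \Div)$. The definition supplies radii $r_\alpha, r_\beta \in (0, r)$ on which $f^*\alpha$ and $f^*\beta$ are respectively pre-almost bounded (or pre-almost quasibounded); taking $r' := \min(r_\alpha, r_\beta)$ and invoking the previous paragraph yields the desired conclusion on $\Disk_{r'}^\di$. No serious obstacle remains once the divisor-merging observation is in hand; the entire proof is an exercise in unpacking the three nested definitions (locally, pre-almost, almost) and reducing each layer to Proposition \ref{prop:algebra}.
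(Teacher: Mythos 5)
Your proof is correct and takes essentially the same route as the paper, whose entire argument is the observation that a desingularization of $E_\alpha \cup E_\beta$ is automatically a desingularization of each of $E_\alpha$ and $E_\beta$, combined with Proposition \ref{prop:algebra}. Your version (Poincar\'e-basis reduction to the function case, union of witnessing divisors, minimum of chart radii) is just the explicit unpacking of the nested definitions that the paper leaves implicit.
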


\begin{proof}
	It is a consequence of Proposition \ref{prop:algebra} and of the fact that a desingularization of the union of two divisors is a desingularisation of each of those divisors.
\end{proof}

The following proposition is a direct consequence of Proposition \ref{prop:int}.1.

\begin{prop}\label{prop:intalm}
	An almost quasibounded (resp. locally quasibounded, pre-almost quasibounded) form of degree $2\di$ is locally integrable.
\end{prop}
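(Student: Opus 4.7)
The plan is to reduce the global statement on $\Anman \setminus \Div$ to the punctured polydisk case already handled by Proposition \ref{prop:int}.1, treating the three notions in increasing order of generality.

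First I would dispose of the locally quasibounded case. Since local integrability is checked pointwise on $\Anman$ via admissible charts, it suffices to fix $x \in \Anman$, pick an admissible chart $f : \Disr^\di \to (\Anman, \Div)$ with $f(0) = x$, and observe that by the definition of ``locally quasibounded'' there exists $r' \in (0, r)$ such that $f^*\alpha$ is quasibounded on $(\Disk_{r'}^*)^\di$. Proposition \ref{prop:int}.1 applied to this pull-back immediately yields integrability on $(\Disk_{r'}^*)^\di$, which is exactly the local integrability of $\alpha$ at $x$ up to restricting the chart.

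For the pre-almost quasibounded case, I would work directly on the punctured polydisk $\Ppdi$. By definition there is a divisor $E \subset \Disr^\di$ whose desingularization $g : (\Anman', \Div') \to (\Disr^\di, \Dipo)$ pulls $\alpha$ back to a locally quasibounded form $g^*\alpha$ on $\Anman' \setminus \Div'$. By the first case, $g^*\alpha$ is locally integrable on $\Anman'$. Since $g$ is a proper birational map that is an isomorphism outside the exceptional locus (a thin set of measure zero with respect to the Lebesgue measure on $\Ppdi$), the change of variables formula, applied on the open set where $g$ is biholomorphic, gives local integrability of $\alpha$ on $\Ppdi$.

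For the almost quasibounded case I would simply combine the previous two reductions. Fix $x \in \Anman$ and an admissible chart $f : \Disr^\di \to (\Anman, \Div)$ with $f(0) = x$. The definition provides $r' \in (0, r)$ such that the restriction of $f^*\alpha$ to $\Disk_{r'}^\di$ is pre-almost quasibounded, hence locally integrable by the previous paragraph, which gives local integrability of $\alpha$ at $x$.

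The only point that requires any care is the change of variables argument in the pre-almost quasibounded case, since $g$ is a composition of permissible blow-ups rather than a single map; but properness of each blow-up and the fact that the exceptional set has Lebesgue measure zero make this routine, so I do not expect a genuine obstacle. The statement is really a packaging lemma built on Proposition \ref{prop:int}.1.
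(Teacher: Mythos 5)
Your proof is correct and follows the route the paper intends: the paper dismisses this as ``a direct consequence of Proposition \ref{prop:int}.1'', and your argument is exactly that reduction with the definitional unwinding made explicit. The only step beyond Proposition \ref{prop:int}.1 is the change-of-variables through a desingularization in the pre-almost quasibounded case, and you handle it correctly (properness plus the fact that the exceptional locus has measure zero), so there is nothing to add.
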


We also have an analog of the second point of Proposition \ref{prop:int}. First, Propositions \ref{prop:algebra}, \ref{prop:int}.1, \ref{prop:algebra2} and \ref{prop:intalm} used with Leibniz formula give us the following lemma.

\begin{lem}\label{lem:rho}
	Let $\alpha$ be an almost quasibounded (resp. quasibounded, locally quasibounded, pre-almost quasibounded) form of degree $2\di-1$ on $\Anman \setminus \Div$ (resp. $\Ppdi$, $\Anman \setminus \Div$, $\Ppdi$) such that $\dif \alpha$ is (locally) quasipositive. Let $\rho$ be a smooth positive real function on $\Anman$ (resp. $\Disr^\di$, $\Anman$, $\Disr^\di$). Then $\rho \alpha$ is almost quasibounded (resp. quasibounded, locally quasibounded, pre-almost quasibounded) and $\dif (\rho \alpha)$ is (locally) quasipositive.
\end{lem}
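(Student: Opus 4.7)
The plan is to prove the lemma by invoking the Leibniz rule $\dif(\rho \alpha) = \dif\rho \wedge \alpha + \rho \dif\alpha$ and showing each summand behaves well, together with a direct multiplicativity check that $\rho\alpha$ inherits the boundedness class of $\alpha$. The underlying observation throughout is that a smooth positive real function $\rho$ on $\Anman$ (resp.\ $\Disr^\di$) restricts on each admissible chart to a function that extends smoothly across the divisor, and hence in the Poincar\'e basis one sees that $\rho$ and $\dif\rho$ have coefficients that extend continuously (in fact smoothly, with vanishing order along the divisor picking up the appropriate factors of $\lvert z_i\rvert(-\ln\lvert z_i\rvert)$). In particular, $\rho$ is bounded in each admissible chart, hence nearly bounded, and $\dif\rho$ is nearly bounded as a $1$-form.

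First I would handle the statement that $\rho\alpha$ lies in the same class as $\alpha$. In the four parallel settings this is immediate from Proposition \ref{prop:algebra} (for quasibounded) or Proposition \ref{prop:algebra2} (for almost quasibounded), using that $\rho$ (or its pullback by a desingularization $g$, which is again a smooth positive function) is a nearly bounded function in each admissible chart. For the pre-almost and almost quasibounded cases one notes that the divisor $E \subset \Disr^\di$ (resp.\ local divisor) witnessing the almost quasibounded character of $\alpha$ also witnesses that of $\rho\alpha$, since pulling $\rho$ back by any desingularization $g$ of $E$ yields a smooth positive function on the blown-up manifold, which remains nearly bounded locally.

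Next I would treat the term $\rho \dif\alpha$. Writing $\dif\alpha$ locally as $(g+h)\,\im\dif z_1\wedge \dif\bar z_1\wedge\dots\wedge\im\dif z_\di\wedge \dif\bar z_\di$ with $g\geq 0$ real-valued and $h$ (locally) integrable, the product $\rho \dif\alpha$ is $(\rho g + \rho h)$ times the same Lebesgue form. Since $\rho>0$, $\rho g \geq 0$; and since $\rho$ is bounded in each admissible chart, $\rho h$ remains (locally) integrable. Hence $\rho\dif\alpha$ is (locally) quasipositive. For the term $\dif\rho\wedge\alpha$, observe that $\dif\rho$ is smooth on the ambient space, hence nearly bounded in every admissible chart; by the module structure in Proposition \ref{prop:algebra} / \ref{prop:algebra2}, $\dif\rho\wedge\alpha$ is (almost) quasibounded of top degree $2\di$. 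Proposition \ref{prop:intalm} then yields that $\dif\rho\wedge\alpha$ is (locally) integrable, and in particular (locally) quasipositive. Summing the two contributions, $\dif(\rho\alpha)$ is (locally) quasipositive, since the sum of a nonnegative real-valued function and two (locally) integrable functions is again of that form.

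The only mildly delicate point is the almost quasibounded setting, where one must verify that a desingularization $g:(\Anman',\Div')\to(\Anman,\Div)$ of the divisor $E$ witnessing the almost quasibounded character of $\alpha$ also accommodates $\rho\alpha$ and $\dif(\rho\alpha)$. Since $g^*\rho$ is smooth and positive on $\Anman'$ and $g^*\dif\rho = \dif g^*\rho$, the arguments above apply verbatim on $(\Anman',\Div')$, completing the proof. No essential obstacle arises; the lemma is a direct combination of the Leibniz rule with the algebraic and integrability statements of Propositions \ref{prop:algebra}, \ref{prop:algebra2}, and \ref{prop:intalm}.
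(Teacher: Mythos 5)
Your proof is correct and follows exactly the route the paper indicates: Leibniz's formula $\dif(\rho\alpha)=\dif\rho\wedge\alpha+\rho\,\dif\alpha$, combined with the algebra/module structure of Propositions \ref{prop:algebra} and \ref{prop:algebra2} (noting that $\rho$ and $\dif\rho$, being smooth on the ambient space, are nearly bounded in each admissible chart after shrinking) and the integrability statements of Propositions \ref{prop:int}.1 and \ref{prop:intalm} to see that $\dif\rho\wedge\alpha$ is integrable, hence quasipositive. The paper gives only this one-line sketch, and your write-up is a faithful, more detailed version of the same argument.
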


This lemma allows us to use partitions of unity to deduce the following proposition from Proposition \ref{prop:int}.

\begin{prop}\label{prop:intnul}
	Let $\alpha$ be an almost quasibounded form of degree $2\di-1$ on $\Anman \setminus \Div$ such that $\dif \alpha$ is locally quasipositive. Then $\dif \alpha$ is locally integrable. If, moreover, $\alpha$ is supported on a compact of $\Anman$, then $\int_{\Anman \setminus \Div} \dif \alpha = 0$.
\end{prop}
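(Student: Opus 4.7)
The plan is to prove both claims by reducing to Proposition \ref{prop:int}.2 through two successive partitions of unity combined with one desingularization, using Lemma \ref{lem:rho} to propagate the growth hypotheses at each step. First I would choose a locally finite cover of $\Anman$ by open sets $U_i$, each contained in the image of an admissible chart $f_i : \Disr^\di \to \Anman$ of $(\Anman, \Div)$, together with a subordinate smooth partition of unity $(\rho_i)$. Applying Lemma \ref{lem:rho} (whose conclusion applies equally well to nonnegative smooth real functions, since $\dif(\rho_i \alpha) = \dif \rho_i \wedge \alpha + \rho_i \dif \alpha$ is then the sum of a top-degree almost quasibounded form, which is locally integrable by Proposition \ref{prop:intalm}, and the locally quasipositive form $\rho_i \dif \alpha$), each $\rho_i \alpha$ is almost quasibounded on $\Anman \setminus \Div$ with $\dif (\rho_i \alpha)$ locally quasipositive. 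Since integrability and the vanishing of the integral are additive and only finitely many $\rho_i$'s contribute near any compact set, it suffices to prove both claims for each $\rho_i \alpha$, i.e.\ in the reduced situation where $\alpha$ is a pre-almost quasibounded form of degree $2\di-1$ on $\Ppdi$, supported on a compact $K \subset \Disr^\di$, with $\dif \alpha$ locally quasipositive.

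In this reduced situation, the definition of pre-almost quasibounded provides a divisor $E \subset \Disr^\di$ admitting a desingularization $g : (\Anman', \Div') \to (\Disr^\di, \Dipo)$ such that $g^*\alpha$ is locally quasibounded on $\Anman' \setminus \Div'$. Since $g$ is proper, $g^{-1}(K)$ is compact and contains the support of $g^*\alpha$. The identity $\dif(g^*\alpha) = g^*(\dif\alpha)$, combined with the observation that holomorphic pullback of a top-degree form preserves local quasipositivity --- the nonnegative component stays nonnegative, and the integrable component remains integrable since $g$ is a proper birational map whose exceptional locus has Lebesgue measure zero --- gives that $\dif(g^*\alpha)$ is locally quasipositive on $\Anman' \setminus \Div'$.

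I would then repeat the partition-of-unity argument on $\Anman'$, now subordinate to a locally finite cover by admissible charts of $(\Anman', \Div')$, only finitely many of which meet the compact set $g^{-1}(K)$. Applying Lemma \ref{lem:rho} in each such chart and pulling back, each piece becomes a quasibounded form of degree $2\di-1$ on $\Ppdi$, compactly supported in $\Disr^\di$, whose exterior derivative is quasipositive. Proposition \ref{prop:int}.2 then yields integrability of each piece and vanishing of its integral; summing the finitely many nontrivial pieces shows that $\dif(g^*\alpha)$ is integrable on $\Anman' \setminus \Div'$ with $\int_{\Anman' \setminus \Div'} \dif(g^*\alpha) = 0$. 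Finally, change of variables along the proper birational morphism $g$, which is a biholomorphism off a measure-zero set, transports these conclusions back to $\Anman \setminus \Div$. The main obstacle I anticipate is the careful bookkeeping of quasipositivity under the blow-up $g$: one must verify that the Jacobian factor $\lvert\det \dif g\rvert^2$ interacts correctly with the decomposition of a quasipositive density into a nonnegative part and an integrable part, which is where the top-degree hypothesis is crucial.
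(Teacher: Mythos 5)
Your proposal is correct and follows essentially the same route as the paper, which likewise deduces the statement from Proposition \ref{prop:int}.2 by combining partitions of unity with Lemma \ref{lem:rho} (and the desingularization built into the definition of almost quasibounded). You simply spell out the details — the two-level partition of unity, the extension of Lemma \ref{lem:rho} to nonnegative cutoffs, and the transport of quasipositivity and of integrals along the proper birational desingularization — that the paper leaves implicit.
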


\begin{cor}\label{cor:clcur}
	An almost quasibounded form $\alpha$ on $\Anman \setminus \Div$ defines a current on $\Anman$. If $\alpha$ is closed, then this current is closed.
\end{cor}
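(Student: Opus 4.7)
The plan is to show that for every smooth compactly supported test form $\beta$ on $\Anman$ of complementary degree, the pairing $\int_{\Anman \setminus \Div} \alpha \wedge \beta$ is a finite complex number, which defines the current $T_\alpha$; and that when $d\alpha = 0$ on $\Anman \setminus \Div$, the pairing vanishes on exact test forms $\beta = d\gamma$, giving $dT_\alpha = 0$.

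First I would verify that any smooth form $\beta$ on $\Anman$ restricts to an almost bounded form on $\Anman \setminus \Div$. In any admissible chart $f : \Disr^\di \to \Anman$, the pull-back $f^*\beta$ is smooth on the whole polydisk, and its coefficients in the Poincaré basis are obtained from those in the standard basis $dz^I \wedge d\bar z^J$ by multiplying by products of factors $|z_i|(-\ln|z_i|)$, each bounded and tending to $0$ as $z_i \to 0$. These Poincaré-basis coefficients are therefore bounded on $(\Disk_{r'}^*)^\di$ for $r' < r$ suitably small, so $f^*\beta$ is nearly bounded, and taking the divisor $E = \emptyset$ in Definition \ref{defi:almbo}, $\beta$ is almost bounded. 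Proposition \ref{prop:algebra2} then shows that $\alpha \wedge \beta$ is almost quasibounded of top degree $2\di$. Covering the support of $\beta$ by finitely many admissible charts and using a subordinate partition of unity, Proposition \ref{prop:intalm} yields integrability over $\Anman \setminus \Div$, so $T_\alpha(\beta) := \int_{\Anman \setminus \Div} \alpha \wedge \beta$ is well defined.

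For closedness, I would assume $d\alpha = 0$ on $\Anman \setminus \Div$ and take a smooth compactly supported test form $\gamma$ of degree $2\di - \deg \alpha - 1$. Then $\alpha \wedge \gamma$ is almost quasibounded of degree $2\di - 1$ with compact support in $\Anman$, and Leibniz together with $d\alpha = 0$ gives
\[ d(\alpha \wedge \gamma) = (-1)^{\deg \alpha}\, \alpha \wedge d\gamma, \]
which is almost quasibounded of top degree ($d\gamma$ is again smooth on $\Anman$, hence almost bounded on $\Anman \setminus \Div$ by the same Poincaré-basis argument). The key observation is that \emph{any} almost quasibounded top-degree form is automatically locally quasipositive: Proposition \ref{prop:intalm} gives local integrability of its coefficient $f$ against the reference volume form $\prod_i i\, dz_i \wedge d\bar z_i$, and an integrable complex function is quasipositive via the trivial decomposition $f = 0 + f$ with $0$ the nonnegative part and $f$ the integrable one. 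Proposition \ref{prop:intnul} then applies directly to $\alpha \wedge \gamma$ and yields $\int_{\Anman \setminus \Div} d(\alpha \wedge \gamma) = 0$, which translates to $(dT_\alpha)(\gamma) = (-1)^{\deg \alpha + 1}\, T_\alpha(d\gamma) = 0$.

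The point where I would slow down is precisely the remark that ``quasipositive'' in the sense of Definition \ref{defi:nb} tolerates complex-valued integrands --- without the decomposition $f = 0 + f$, Proposition \ref{prop:intnul} would seem to demand a pointwise nonnegativity that a generic closed complex form $\alpha$ cannot supply. Once that is clarified, no new analysis is needed: the construction of the current and the Stokes-type vanishing both reduce to the module structure of Proposition \ref{prop:algebra2} and the integration results of Propositions \ref{prop:intalm} and \ref{prop:intnul}.
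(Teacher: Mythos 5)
Your proposal is correct and follows exactly the route the paper intends for this corollary: smooth test forms are almost bounded, so the wedge with $\alpha$ is almost quasibounded and Proposition \ref{prop:intalm} gives the finite pairing, while Proposition \ref{prop:intnul} applied to $\alpha\wedge\gamma$ gives the Stokes-type vanishing; your observation that an almost quasibounded top-degree form is automatically locally quasipositive (via integrability and the decomposition $f=0+f$) is precisely the reading of Definition \ref{defi:nb} that makes Proposition \ref{prop:intnul} applicable to complex-valued integrands.
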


\section{Real polarized variations of Hodge structure and singularities}\label{section:pvhs}

Let $\Anman$ be a complex manifold. In the following, we identify a finite locally free $\Stru_{\Anman}$-module with the associated holomorphic vector bundle. In particular, a holomorphic vector subbundle corresponds to a $\Stru_{\Anman}$-submodule which is locally a direct factor, and a morphism between two holomorphic vector bundles corresponds to a morphism of $\Stru_{\Anman}$-modules.

\bigskip

We abbreviate ``real polarized variation of Hodge structure'' to ``PVHS''. When we denote by $(\Locsys_{\R}, \Pol, \Filho^\bullet)$ a PVHS, $\Locsys_{\R}$ is the underlying real local system, $\Pol$ is the polarization form, and $\Filho^\bullet$ is the Hodge filtration. We denote by $\Locsys_{\Stru}$ the holomorphic vector bundle associated with $\Locsys_{\R}$. The Higgs map, which is a morphism of vector bundles $\Filho^p / \Filho^{p+1} \to \Omega_{\Anman}^1 \otimes \Filho^{p-1} / \Filho^p$, is denoted by $\theta$. We denote by $\bigoplus_{p+q=k} \Locsys^{p,q}$ the Hodge decomposition of $\Locsys_{\Stru}$ and by $\home$ the Hodge metric over $\Locsys_{\Stru}$. We will use the same notation for the Hermitian metric induced on holomorphic vector subquotients of $\Locsys_{\Stru}$.

\bigskip

Now, suppose $\Anman$ is the complement of a normal crossing divisor $\Div$ in a complex manifold $\Anman'$. Let $\Locsys$ be a complex local system on $\Anman$ with unipotent monodromy around $\Div$. We denote by $\ext{\Locsys}$ the Deligne extension of the holomorphic vector bundle associated with $\Locsys$ on $\Anman'$ (see \cite[Proposition II.5.2]{deligne2006equations}). If $(\Locsys_{\R}, \Pol, \Filho^{\bullet})$ is a PVHS over $\Anman$ with unipotent monodromy around $\Div$, we denote by $\ext{\Filho}^\bullet$ the canonical extension of $\Filho^{\bullet}$ on $\ext{\Locsys}$ given by Schmid's Nilpotent Orbit Theorem (\cite[Theorem 4.12]{schmid1973variation}). The definition of the Deligne extension allows to extend the flat connection as a logarithmic connection $\ext{\Filho}^p \to \Omega_{\Anman'}^1(\log \Div) \otimes \ext{\Filho}^{p-1}$ (for $p \in \Z$) and the Higgs map as a morphism of holomorphic vector bundles over $\Anman'$: $\theta: \ext{\Filho}^p / \ext{\Filho}^{p+1} \to \Omega_{\Anman'}^1(\log \Div) \otimes \ext{\Filho}^{p-1} / \ext{\Filho}^p$.

\subsection{Singularities of the Hodge metric}

We will need later the following simplified form of \cite[Theorem (5.21)]{cattani1986degeneration}.

\begin{thm}\label{thm:homelog}
	Let $(\Locsys_{\R}, \Pol, \Filho^\bullet)$ be a PVHS over $\Ppdi$ with unipotent monodromy. Let $s$ be a non vanishing section of $\ext{\Locsys}$. Then $\home(s,s)$ and $\home(s,s)^{-1}$ have at most log growth.
\end{thm}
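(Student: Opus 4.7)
The plan is to deduce the statement from the sharp two-sided asymptotic estimates on the Hodge metric in the Deligne extension due to Cattani, Kaplan and Schmid \cite[Theorem (5.21)]{cattani1986degeneration}, of which Theorem \ref{thm:homelog} is an explicit simplification. The setting is local, so I would fix the logarithms $N_1, \dots, N_\di$ of the commuting unipotent monodromies around the divisor and the limit mixed Hodge structure attached to them by Schmid's nilpotent orbit theorem.

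First, I would pick a holomorphic frame $(e_1, \dots, e_n)$ of $\ext{\Locsys}$ on $\Disr^\di$, for instance the Deligne frame obtained by twisting a flat multi-valued frame $(u_1, \dots, u_n)$ by $\exp\bigl(\tfrac{1}{2\pi\im} \sum_{i=1}^\di (\log z_i) N_i\bigr)$. Writing $s = \sum_i f_i e_i$ with $f_i$ holomorphic on $\Disr^\di$, the non-vanishing hypothesis forces $(f_1(z), \dots, f_n(z)) \neq 0$ at every point of $\Disr^\di$, so $\|f\|^2 := \sum_i |f_i|^2$ is a continuous strictly positive function on $\Disr^\di$; in particular on $\Disk_{r'}^\di$ for any $r' < r$ it is bounded above and below by positive constants.

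Second, I would invoke \cite[Theorem (5.21)]{cattani1986degeneration}: for the frame above, it provides polynomial-in-log estimates of the form
\[
A^{-1} \prod_{i=1}^\di (-\ln |z_i|)^{-k} \leq \home(e_j, e_j) \leq A \prod_{i=1}^\di (-\ln |z_i|)^{k}
\]
together with analogous bounds on the off-diagonal entries of the Gram matrix $G := (\home(e_i, e_j))_{ij}$ and of its inverse $G^{-1}$. In particular every entry of $G$ and of $G^{-1}$ has at most log growth in the sense of Definition \ref{defi:nb}. The upper bound
\[
\home(s,s) = {}^t\bar f \cdot G \cdot f \leq n\, \|f\|_{\infty}^2\, \max_{i,j} |G_{ij}|
\]
is then immediate. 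The matching upper bound on $\home(s,s)^{-1}$ follows from the elementary inequality $\home(s,s) \geq \|f\|^2 / \lambda_{\max}(G^{-1}) \geq \|f\|^2 / (n \max_{i,j} |G^{-1}_{ij}|)$, combined with the uniform positive lower bound on $\|f\|^2$ and the log growth of the entries of $G^{-1}$.

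The main obstacle is concentrated entirely inside the CKS theorem itself, whose proof relies on the $\mathrm{SL}_2$-orbit theorem in several variables and is highly non-trivial. Granting it, Theorem \ref{thm:homelog} reduces to a frame change followed by a routine linear-algebra estimate; the only non-obvious observation is the inequality $\home(s,s) \geq \|f\|^2 / \lambda_{\max}(G^{-1})$, which is a direct consequence of the positive-definiteness of the Hermitian form $G$.
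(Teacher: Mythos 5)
Your proposal is correct and takes essentially the same route as the paper, which states this theorem purely as a ``simplified form'' of \cite[Theorem (5.21)]{cattani1986degeneration} and offers no further argument; your reduction (Deligne frame, positivity of $\|f\|^2$, log growth of the entries of the Gram matrix and of its inverse via CKS and the dual variation, then linear algebra) is the standard way to extract the simplified statement. The only caveat is that the uniform lower bound on $\|f\|^2$ (and the CKS estimates themselves) hold only after shrinking to $(\Disk_{r'}^*)^\di$ with $r' < r$, which is harmless given how the theorem is applied later in the paper.
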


In the following statement, Koll{\'a}r's definition of almost bounded is different from ours, but one can easily check that the proof works for our definition.

\begin{lem}\cite[Proposition 5.15]{kollar1987subadditivity}
	Let $\VB$ be a holomorphic vector line subbundle of the canonical extension of a PVHS over $\Anman \setminus \Div$ with unipotent monodromy around $\Div$, let $\home$ be its Hodge metric. Then the first Chern form $\Chern(\VB, \home)$ is almost bounded, and for every trivializing section $s$ of $\det(\VB)$ on an open subset $U$ of $\Anman$, the connection form $\partial \ln \lVert s \rVert_{\home}^2$ is almost bounded on $U \setminus \Div$.
\end{lem}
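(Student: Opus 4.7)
The plan is to work locally on an admissible chart $f : \Disr^\di \to \Anman$ with $f^{-1}(\Div) = \Dipo$, and with a local holomorphic trivializing section $s$ of $\VB$ on this chart. Since $\Chern(\VB, \home) = -\frac{1}{2\pi\im} \bar\partial \partial \ln \home(s,s)$, both assertions reduce to estimates on $\ln \home(s,s)$ and on its derivatives of order one and two near $\Dipo$.

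The qualitative input is Theorem \ref{thm:homelog}: both $\home(s,s)$ and $\home(s,s)^{-1}$ have at most log growth. This already excludes any $|z_i|^{2\alpha}$-type behavior with $\alpha \neq 0$; all singular behavior at $\Dipo$ arises from logarithmic factors. What is actually needed is the stronger Cattani--Kaplan--Schmid several-variable norm estimate on the canonical extension, which refines Theorem \ref{thm:homelog} into a quasi-polynomial expansion
\begin{equation*}
\home(s,s)(z) = P(-\ln|z_1|, \dots, -\ln|z_\di|) \cdot \bigl( 1 + \varepsilon(z) \bigr),
\end{equation*}
where $P$ is a polynomial with positive leading coefficients, dictated by the position of $s$ with respect to the monodromy weight filtrations, and $\varepsilon$ is a smooth function which, together with its logarithmic derivatives, tends uniformly to $0$ towards $\Dipo$. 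After a suitable desingularization $(\Anman', \Div') \to (\Disr^\di, \Dipo)$ resolving the divisor along which different monomials of $P$ become dominant, one may arrange $P$ to be a single monomial $\prod_j (-\ln|w_j|)^{n_j}$ up to a smooth positive bounded factor $u(w)$.

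In such a normal form $\home(s,s) = u(w) \prod_j (-\ln|w_j|)^{n_j} (1 + \varepsilon)$, the computation of $\partial \ln \home(s,s)$ and of $\bar\partial \partial \ln \home(s,s)$ reduces to model terms of the type $\partial \ln(-\ln|w_j|) = -\frac{\dif w_j}{2 w_j (-\ln|w_j|)}$ and $\bar\partial \partial \ln(-\ln|w_j|)$, plus bounded smooth contributions from $u$ and from $\varepsilon$. In the Poincaré basis, the coefficient of $\partial \ln(-\ln|w_j|)$ has modulus $\frac{1}{2}$, and that of the $(1,1)$-form $\bar\partial \partial \ln(-\ln|w_j|)$ is of order $(-\ln|w_j|)^{-2}$; both are bounded, hence nearly bounded. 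Summing the finitely many such model terms shows that the pull-backs of $\partial \ln \home(s,s)$ and of $\bar\partial \partial \ln \home(s,s)$ to $\Anman' \setminus \Div'$ are locally nearly bounded, which unwinds to almost boundedness on $\Anman \setminus \Div$.

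The hard part will be the upgrade from the bare two-sided log-growth estimate of Theorem \ref{thm:homelog} to the quasi-polynomial asymptotic expansion with a controlled error term: this is precisely where the full Cattani--Kaplan--Schmid several-variable norm estimate theorem, in the form used by Kollár in \cite[\S 5]{kollar1987subadditivity}, is needed. A secondary technical point, which the allowed desingularization and the notion of nearly bounded (as opposed to bounded) are designed to address, is that in dimension $\di \geq 2$ the leading monomial of $P$ can change from one region of a neighborhood of $\Dipo$ to another, and the Thin sets $\Thin_i^{\Const,\epsilon}$ accommodate the extra logarithmic growth that appears in the transitional strata.
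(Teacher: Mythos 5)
The paper does not actually reprove this lemma: it quotes Kollár's Proposition 5.15 and only remarks that his proof must be (and can easily be) rechecked against the modified definition of ``almost bounded''. So the comparison is with Kollár's argument, whose outer skeleton you have reproduced correctly: reduce to estimating $\partial\ln\home(s,s)$ and $\bar\partial\partial\ln\home(s,s)$ in the Poincaré basis on admissible charts, with the Cattani--Kaplan--Schmid several-variable norm estimates as the analytic input, and note that the model terms $\partial\ln(-\ln\lvert w_j\rvert)$ have bounded Poincaré coefficients. The gap is in the middle step, and it sits exactly where the thin sets and the desingularization are genuinely needed.

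The normal form you posit, $\home(s,s)=u(w)\prod_j(-\ln\lvert w_j\rvert)^{n_j}(1+\varepsilon)$ with $\varepsilon$ and its logarithmic derivatives tending uniformly to $0$, is not what CKS provide and cannot be arranged by a permissible blow-up: the regions where different monomials of $P$ dominate are real sectors of the form $-\ln\lvert z_i\rvert\gtrless-\ln\lvert z_j\rvert$, not divisors, so no desingularization separates them. More tellingly, if your normal form held, your own computation would show the connection and curvature forms are honestly bounded in the Poincaré basis, and the thin sets $\Thin_i^{\Const,\epsilon}$ of Definition \ref{defi:nb} would never be used --- a sign that the true source of unboundedness has been missed. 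That source is the following: writing the trivializing section as $s=\sum_j f_je_j$ in a holomorphic frame $(e_j)$ of the canonical extension adapted to the limiting weight filtrations, the coefficients $f_j$ are holomorphic with no common zero, but may vanish individually. Differentiating $\ln\lVert s\rVert_{\home}^2$ produces terms of the shape $\frac{\lvert f_j\rvert^2\lVert e_j\rVert^2}{\lVert s\rVert^2}\,\frac{\dif f_j}{f_j}$, and near $\{f_j=0\}$ the Poincaré coefficient of $\frac{\dif f_j}{f_j}$ has modulus of order $-\ln\lvert z_i\rvert$, not $\frac12$. The desingularization in the definition of ``almost bounded'' is there to put $\{f_j=0\}\cup\Div$ in normal crossings, and the thin sets are exactly the loci where a power $\lvert z_i\rvert^{2n}$ coming from an $f_j$ competes with the logarithmic factors coming from the Hodge norms; this is the mechanism isolated in the paper's Lemma \ref{lem:comp} (near-boundedness of $\frac{\partial g}{f+g}$ for $g=\lvert a\rvert^2$ and $f^{-1}$ of log growth, which is only an $(\Const,\epsilon,1,1)$-near bound, not a bound). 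Your sketch, as written, proves the statement only in the degenerate case where $s$ has nowhere-vanishing coefficients in an adapted frame.
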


The two following statements of Koll{\'a}r were originally stated with ``nearly bounded'' instead of ``almost bounded''. However, his proof goes by reduction to the case of a line subbundle for which only the almost boundedness is known. Moreover, Koll{\'a}r's proof uses the fact that the class of forms under consideration form an algebra. This is clear for the class of nearly bounded forms, but not for the class of almost bounded forms in the original definition. This is why we changed the definition of almost bounded forms, so that they form an algebra too.

\begin{prop}\cite[Remark 5.19]{kollar1987subadditivity}\label{prop:homealm}
	Let $\VB$ be a holomorphic vector subquotient of the canonical extension of a PVHS over $\Anman \setminus \Div$ with unipotent monodromy around $\Div$, let $\home$ be its Hodge metric. Then the first Chern form $\Chern(\VB, \home)$ is almost bounded, and for every trivializing section $s$ of $\det(\VB)$ on an open subset $U$ of $\Anman$, the connection form $\partial \ln \lVert s \rVert_{\home}^2$ is almost bounded on $U \setminus \Div$.
\end{prop}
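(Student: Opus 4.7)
The plan is to reduce to the previous lemma by passing to determinants. For any Hermitian vector bundle $(\VB, \home)$ of rank $r$, standard Chern--Weil theory gives $\Chern(\VB, \home) = \Chern(\det \VB, \det \home)$, and for a local trivializing section $s$ of $\det \VB$ the connection form $\partial \ln \lVert s \rVert^2_{\det \home}$ depends only on the Hermitian line bundle $(\det \VB, \det \home)$. So it suffices to prove the statement for this determinant line bundle.

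Write $\VB = \VB_1 / \VB_2$ where $\VB_2 \subset \VB_1 \subset \ext{\Locsys}$ are holomorphic subbundles. The short exact sequence $0 \to \VB_2 \to \VB_1 \to \VB \to 0$ gives a holomorphic isomorphism $\det \VB_1 \cong \det \VB_2 \otimes \det \VB$, which becomes an isometry once each bundle is equipped with the metric induced from the Hodge metric on $\ext{\Locsys}$ (check this on local orthonormal frames adapted to the filtration). Equivalently, $(\det \VB, \det \home)$ is isometric to $(\det \VB_1, h_1) \otimes (\det \VB_2, h_2)^{\vee}$ where $h_i$ is the restricted Hodge metric. Setting $r_i := \mathrm{rank}(\VB_i)$, we have the inclusion $\det \VB_i = \Lambda^{r_i} \VB_i \subset \Lambda^{r_i} \ext{\Locsys}$. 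The exterior power $\Lambda^{r_i} \Locsys$ carries a natural PVHS structure (exterior powers of PVHS are PVHS), its monodromy remains unipotent (exterior powers of unipotent operators are unipotent), and the Deligne extension is compatible with exterior powers in the unipotent case, i.e.\ $\Lambda^{r_i} \ext{\Locsys} = \ext{(\Lambda^{r_i} \Locsys)}$, with the Hodge metric of the exterior power coinciding with the metric induced from $\home$. Thus $\det \VB_i$ is a holomorphic line subbundle of the canonical extension of a PVHS with unipotent monodromy.

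The previous lemma applied to each of the line subbundles $\det \VB_1$ and $\det \VB_2$ gives that $\Chern(\det \VB_i, h_i)$ and, for local trivializing sections $s_i$, the connection forms $\partial \ln \lVert s_i \rVert^2_{h_i}$ are almost bounded. Since $(\det \VB_2)^{\vee}$ carries the inverse metric, we have the identities $\Chern(\det \VB, \det \home) = \Chern(\det \VB_1, h_1) - \Chern(\det \VB_2, h_2)$ and, choosing $s = s_1 \otimes s_2^{\vee}$ locally, $\partial \ln \lVert s \rVert^2_{\det \home} = \partial \ln \lVert s_1 \rVert^2_{h_1} - \partial \ln \lVert s_2 \rVert^2_{h_2}$. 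Since almost bounded forms form a graded algebra by Proposition \ref{prop:algebra2}, both expressions are almost bounded, which concludes the proof. The main subtle point is justifying the compatibility $\Lambda^{r_i} \ext{\Locsys} = \ext{(\Lambda^{r_i} \Locsys)}$ and the matching of induced and Hodge metrics on exterior powers; this is routine given the characterization of the Deligne extension by the vanishing of the real parts of the eigenvalues of its residues, but requires checking on a multivalued flat frame near the divisor.
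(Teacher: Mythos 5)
Your proof is correct and follows exactly the route the paper attributes to Koll\'ar's Remark 5.19: reduce to the determinant line bundle, realize it (via the isometry $\det \VB_1 \cong \det \VB_2 \otimes \det \VB$) as built from line subbundles of canonical extensions of exterior-power PVHSs with still-unipotent monodromy, apply the preceding line-subbundle lemma, and combine using the algebra structure of almost bounded forms that the paper's modified definition was designed to provide. The only point left implicit is that the claim concerns an \emph{arbitrary} trivializing section $s$ rather than just $s_1 \otimes s_2^{\vee}$; this is harmless, since two trivializations differ by a nonvanishing holomorphic function $g$ on $U$ and $\partial \ln \lvert g \rvert^2 = \dif g / g$ is holomorphic across $\Div$, hence almost bounded.
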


\begin{thm}\cite[Theorem 5.20]{kollar1987subadditivity}\label{thm:chern}
	Let $\VB_1, \dots, \VB_n$ be holomorphic vector bundle subquotients of PVHSs over $\Anman \setminus \Div$ with unipotent monodromy around $\Div$, let $\home_1, \dots, \home_n$ be their Hodge metrics, let $k_1, \dots, k_n \in \Ns$. Let $P \in \C[X_1, \dots, X_n]$ be a polynomial in $n$ variables. Then $P(C_{k_1}(\VB_1, \home_1), \dots, C_{k_n}(\VB_n, \home_n))$ is almost bounded and defines a closed current on $\Anman$ whose de Rham cohomology class is $P(c_{k_1}(\VB_1), \dots, c_{k_n}(\VB_n))$.
\end{thm}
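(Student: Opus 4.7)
The plan is to split the theorem into an analytic part (almost boundedness, from which Corollary \ref{cor:clcur} yields the closed current) and a topological part (identifying its de Rham cohomology class with $P(c_{k_1}(\VB_1), \dots, c_{k_n}(\VB_n))$). For the analytic part, I would reduce to the first Chern form case already handled by Proposition \ref{prop:homealm}, via a splitting principle on flag bundles. Fix $i$ and let $p : \mathrm{Fl}(\ext{\VB_i}) \to \Anman$ be the complete flag bundle of the canonical extension of $\VB_i$: it is a proper submersion and $p^{-1}(\Div)$ is a normal crossing divisor. On $\mathrm{Fl}(\ext{\VB_i}) \setminus p^{-1}(\Div)$, the pullback $p^*\VB_i$ carries a tautological filtration whose successive line quotients $L_{i,j}$ are line subquotients of the pullback PVHS (still with unipotent monodromy around $p^{-1}(\Div)$), so Proposition \ref{prop:homealm} ensures that their first Chern and connection forms are almost bounded. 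The Chern-Weil formalism for filtered Hermitian bundles yields
\[
p^* C_k(\VB_i, \home_i) = \sigma_k\bigl(C_1(L_{i,1}, \home_{i,1}), \dots, C_1(L_{i,r_i}, \home_{i,r_i})\bigr) + d\eta_{i,k},
\]
where $\eta_{i,k}$ is a transgression built from the second fundamental forms of the filtration. I would check via the Cattani-Kaplan-Schmid asymptotics (Theorem \ref{thm:homelog}) that these second fundamental forms are also almost bounded, so that Proposition \ref{prop:algebra2} makes $p^*C_k(\VB_i, \home_i)$ almost bounded. Since $p$ is locally trivial with divisor-respecting local sections, almost boundedness descends to $\Anman \setminus \Div$, and a last application of Proposition \ref{prop:algebra2} handles the polynomial $P$.

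For the cohomology class, each $C_k(\VB_i, \home_i)$ is closed by Chern-Weil, so $P(\cdots)$ is a closed almost bounded form, and Corollary \ref{cor:clcur} produces the claimed closed current on $\Anman$. To identify its class, I would choose a smooth Hermitian metric $\home_i^{\mathrm{sm}}$ on $\ext{\VB_i}$: the classical Chern-Weil form $C_k(\ext{\VB_i}, \home_i^{\mathrm{sm}})$ is smooth on all of $\Anman$ and represents $c_k(\VB_i) \in H^{2k}(\Anman)$. The Bott-Chern transgression provides $\tau_{i,k}$ with $d\tau_{i,k} = C_k(\VB_i, \home_i) - C_k(\ext{\VB_i}, \home_i^{\mathrm{sm}})$ on $\Anman \setminus \Div$, and by the same flag bundle strategy $\tau_{i,k}$ is almost quasibounded. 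Testing against an arbitrary compactly supported smooth form on $\Anman$ and applying Proposition \ref{prop:intnul} (after multiplying by a cutoff, via Lemma \ref{lem:rho}) shows that the two closed currents are cohomologous, with common class $c_k(\VB_i)$. Multilinearity and multiplicativity of closed currents under wedges of closed almost bounded forms then deliver the class $P(c_{k_1}(\VB_1), \dots, c_{k_n}(\VB_n))$.

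The main obstacle is establishing almost (quasi)boundedness of the transgressions $\eta_{i,k}$ and $\tau_{i,k}$: these come from second fundamental forms and interpolation of metrics, and controlling them requires extending Kollár's line-subquotient estimates to the off-diagonal entries of the connection matrix in frames adapted to the PVHS near the divisor. The needed asymptotic bounds ultimately rest on the norm estimates of \cite{cattani1986degeneration} together with Schmid's Nilpotent Orbit Theorem \cite{schmid1973variation}.
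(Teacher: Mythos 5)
Your overall architecture matches what the paper actually does: its proof of Theorem \ref{thm:chern} is literally ``same proof as Koll\'ar's Theorem 5.20, using Corollary \ref{cor:clcur}'', i.e.\ reduction to first Chern forms of line subquotients, the algebra property of almost bounded forms (Proposition \ref{prop:algebra2}), Corollary \ref{cor:clcur} for the closed current, and a comparison with a smooth metric on the canonical extension for the cohomology class. So in spirit you are reconstructing the intended argument, including the patches the paper makes to Koll\'ar's original (the redefinition of ``almost bounded'' precisely so that your appeals to Proposition \ref{prop:algebra2} are legitimate).

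There is, however, a genuine problem in your almost-boundedness step as written. You package the correction on the flag bundle as an exact form, $p^*C_k(\VB_i,\home_i)=\sigma_k(C_1(L_{i,1},\home_{i,1}),\dots)+d\eta_{i,k}$, and then deduce almost boundedness of the left-hand side from almost boundedness of the second fundamental forms. That inference fails: almost bounded forms are not stable under exterior differentiation (bounds on coefficients in the Poincar\'e frame say nothing about their derivatives), so ``$\eta_{i,k}$ almost bounded'' does not give ``$d\eta_{i,k}$ almost bounded''. The correct route avoids the transgression entirely: in a holomorphic frame adapted to the tautological filtration, the curvature matrix of $(p^*\VB_i,p^*\home_i)$ equals the block-diagonal curvature of the graded line bundles plus terms quadratic in the second fundamental forms, so $C_k$ is an honest polynomial (no exterior derivative) in the $C_1(L_{i,j},\home)$ and these quadratic terms, and Proposition \ref{prop:algebra2} applies once the off-diagonal entries are shown almost bounded. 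That last estimate is exactly what you defer to as ``the main obstacle'', and it is genuinely not contained in Proposition \ref{prop:homealm} as stated (which controls only the first Chern form and the determinant connection form); it must be extracted from the Cattani--Kaplan--Schmid norm estimates as in Koll\'ar's Remark 5.19. The same caveat applies to your Bott--Chern transgression $\tau_{i,k}$ for the cohomology-class identification: its almost quasiboundedness rests on the same unproven off-diagonal bounds, though the quasipositivity hypothesis of Proposition \ref{prop:intnul} is fine since both Chern forms are locally integrable (Proposition \ref{prop:intalm} on one side, smoothness on the other). In short, the skeleton is right but the analytic heart of the proof is still open in your write-up.
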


\begin{proof}
	Same proof as \cite[Theorem 5.20]{kollar1987subadditivity}, using Corollary \ref{cor:clcur}.
\end{proof}

The last result we will need is the following theorem.

\begin{thm}\cite[Theorem 1.8]{brunebarbe2018symmetric}\label{thm:nef}
	Let $\Prv$ be a smooth complex projective variety, $\Div \subset \Prv$ a normal crossing divisor, $(\Locsys_{\R}, \Pol, \Filho^\bullet)$ be a PVHS over $\Prv \setminus \Div$ with unipotent monodromy around $\Div$, $p \in \Z$ and $L$ a holomorphic vector subbundle of $\ext{\Filho}^p/\ext{\Filho}^{p+1}$ of rank $1$ in the kernel of the Higgs map $\theta$. Then the dual of $L$ is nef.
\end{thm}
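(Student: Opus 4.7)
My plan is to realise the dual line bundle $L^\vee$ as a line bundle carrying the dual Hodge metric $\home^\vee$, to show that this metric has nonnegative curvature in the sense of currents on all of $\Prv$, and then to upgrade pseudo-effectivity to nefness by observing that this metric has only very mild singularities along $\Div$.

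On the interior $\Prv \setminus \Div$ I would first apply Griffiths' classical curvature formula for Hodge subbundles. For the Hodge metric on $\ext{\Filho}^p/\ext{\Filho}^{p+1}$, the Chern curvature is schematically $\theta \wedge \theta^* - \theta_{\mathrm{in}}^* \wedge \theta_{\mathrm{in}}$, where $\theta$ is the outgoing Higgs map and $\theta_{\mathrm{in}}$ the incoming one. For the further holomorphic subbundle $L$ one additionally subtracts the nonnegative contribution of the second fundamental form. Our assumption that $L$ sits in the kernel of $\theta$ kills the outgoing term, so all remaining contributions are nonpositive and we obtain the pointwise inequality $\Chern(L, \home) \leq 0$ on $\Prv \setminus \Div$, or equivalently $\Chern(L^\vee, \home^\vee) \geq 0$ there.

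To cross the boundary, Proposition \ref{prop:homealm} gives that $\Chern(L, \home)$ is almost bounded, and Theorem \ref{thm:chern} then says that it defines a closed current on all of $\Prv$ whose de Rham class is $\chern(L)$. Combined with the pointwise sign control on the interior and the fact that almost bounded $(1,1)$-forms define currents of order zero, this realises $\chern(L^\vee)$ as a closed positive current, so $L^\vee$ is pseudo-effective. To upgrade pseudo-effectivity to nefness I would invoke Demailly's criterion that a pseudo-effective line bundle equipped with a positive singular metric of vanishing Lelong numbers is nef: via Demailly's regularisation theorem, such a metric can be approximated by smooth metrics with curvature bounded below by $-\epsilon \omega$ for every $\epsilon > 0$ and every fixed Kähler form $\omega$. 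For a local non-vanishing section $s$ of $\ext{L}$, Theorem \ref{thm:homelog} states that $\home(s,s)$ and $\home(s,s)^{-1}$ have at most logarithmic growth near $\Div$, so the local weight $\log \home(s,s)$ of $\home^\vee$ grows no faster than iterated logarithms $\log(-\log \lvert z_i \rvert)$. This is negligible compared to $\log \lvert z \rvert$, so every Lelong number of $\home^\vee$ vanishes and nefness follows.

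The main obstacle is precisely this passage from Griffiths' pointwise curvature inequality on the smooth locus to a globally defined closed positive current on $\Prv$ with vanishing Lelong numbers along $\Div$: without the Cattani--Kaplan--Schmid asymptotics underlying Theorem \ref{thm:homelog} and Kollár's moderate-growth machinery (Proposition \ref{prop:homealm} and Theorem \ref{thm:chern}), the naive curvature inequality would never convert into a cohomological statement on the compactification, and indeed this boundary analysis is what the bulk of the preceding section has been setting up.
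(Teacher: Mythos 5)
The paper does not prove this statement: it is imported wholesale as a citation of Brunebarbe (\cite[Theorem 1.8]{brunebarbe2018symmetric}), so there is no internal proof to compare yours against. Judged on its own terms, your sketch is a legitimate and essentially complete route to the result, and it is the natural one given the machinery the surrounding sections set up. The interior step is the standard Griffiths computation: on $\Filho^p/\Filho^{p+1}$ the curvature of the Hodge metric is the sum of a semipositive term built from the outgoing Higgs map and a seminegative term built from the incoming one, so on a subbundle of $\ker\theta$ the outgoing term vanishes and, after subtracting the second fundamental form, $\Chern(L,\home)\leq 0$ pointwise on $\Prv\setminus\Div$. For the boundary, your two inputs are the right ones, with two details worth making explicit. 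First, Theorem \ref{thm:homelog} as stated controls sections of $\ext{\Locsys}$, whereas you need the analogous two-sided log-growth estimate for a non-vanishing section of a subbundle of the graded quotient $\ext{\Filho}^p/\ext{\Filho}^{p+1}$; this does follow from the Cattani--Kaplan--Schmid norm estimates (and is implicit in the paper's remark that the induced metric on $\ext{\Filho}^p/\ext{\Filho}^q$ is good in Mumford's sense), but it is not literally the cited statement. Second, once you know the local weight $\log\home(s,s)$ is plurisubharmonic on the punctured polydisk and of log-log growth, the cleanest conclusion is that it extends as a psh function across $\Div$ with vanishing Lelong numbers (subtract $\epsilon\log\lvert f\rvert$ for a local equation $f$ of $\Div$ and let $\epsilon\to 0$); this yields directly a singular positive metric on $L^\vee$ with zero Lelong numbers, and Demailly's regularization then gives nefness, without needing Theorem \ref{thm:chern} or the order-zero-current discussion at all. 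Alternatively, since $L$ has rank one, one can bypass currents entirely by testing $L^\vee$ against curves: restrict the PVHS to a smooth curve meeting $\Div$ properly, use the integrability of $\Chern(L,\home)$ (Proposition \ref{prop:intalm}) and Theorem \ref{thm:chern} to identify $\int_C \Chern(L,\home)$ with $\deg(L|_C)\leq 0$. Either way, no genuine gap.
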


\subsection{Case of locally symmetric varieties}

We use the notations of Section \ref{section:lsav}. We denote $\Lsav$ a locally symmetric variety and $\Latt \subset \Group$ its fundamental group (see Definition \ref{defi:lsv}). A way to construct a PVHS of weight $0$ over $\Lsav$ is given in \cite[\S 4]{zucker1981locally}. We describe briefly this construction here, and we refer the reader to the loc. cit. article for details. Using the same notations as in Section \ref{section:lsav}, we choose an irreducible real Lie group representation $\rho: G \to \GL(E)$ of $\Group$.

\bigskip

First, we define a PVHS over $\Dom$. We set $\Locsys_{\R}$ the constant local system with fiber $E$ over $\Dom$. The polarization is given by the admissible inner product. Let $y \in \Dom$. As $z$ (defined in Section \ref{section:lsav}) is in the center of $\Isot$, $gzg^{-1}$ does not depend on the choice of $g \in \Group$ as long as $gx = y$. We define $\Filho^{\bullet}$ so as at $y$, $\Locsys^{p,q}$ are eigenspaces of $gzg^{-1}$. This construction descends to the quotient by $\Latt$.

\bigskip

The monodromy of the local system is given by the representation of $\Latt$ over $E$. If $p+q = 0$, $\sigma$ is an element of $\Filho_x^p / \Filho_x^{p+1} \cong \Locsys_x^{p,q}$ and $v \in T_x \Lsav \cong \mlie^+$ is a holomorphic tangent vector, then $\theta(v)\sigma = \dif\rho(v)\sigma$. The PVHS over $\Dom$ is invariant under the action of $\Group$. If $\Latt$ is arithmetic and neat and $(\Comp, \Div)$ is a toroidal compactification of $\Lsav$, then for $p \leq q$, the metric induced by the Hodge metric on $\ext{\Filho}^p / \ext{\Filho}^q$ is ``good'' in the sense of \cite{mumford1977hirzebruch}. In particular, if $f$ is an admissible chart on $(\Comp, \Div)$ and $\sigma$ is a section of $f^*\ext{\Filho}^p / \ext{\Filho}^q$, then $\lVert \sigma \rVert_{\home}$ has at most log growth.

\begin{prop}\label{prop:exvhs}
	If $\Latt$ is arithmetic, then there exist a finite étale covering map $\kappa: \Fico \to \Lsav$, a PVHS $(\Locsys_{\R}, \Pol, \Filho^\bullet)$ over $\Fico$, a log compactification $(\Comp, \Div)$ of $\Fico$ such that $\Locsys$ has unipotent monodromy around $\Div$, and a locally split injective morphism of $\Stru_{\Comp}$-modules $T_{\Comp}(-\log \Div) \to \ext{\Filho}^{-1} / \ext{\Filho}^0$ such that over $\Fico$, the pull-back of the Hodge metric is the metric induced by $\Came$ on $\Fico$, and the image of $T_{\Comp}(-\log \Div)$ in $\ext{\Filho}^{-1} / \ext{\Filho}^0$ is in the kernel of the Higgs map.
\end{prop}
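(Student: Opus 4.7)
The plan is to take $\rho = \ad : \Group \to \GL(\glie)$, which is irreducible because $\glie$ is simple (the isometry Lie algebra of an irreducible bounded symmetric domain is simple), and apply the construction recalled from \cite[\S 4]{zucker1981locally}. Since $z$ acts on $\llie^\C$, $\mlie^+$, $\mlie^-$ with eigenvalues $1, \im, -\im$, the resulting PVHS has weight $0$ with complexified Hodge decomposition at the base point given (in the convention that makes the Higgs map coincide with adjunction) by $\llie^\C = \Locsys^{0,0}$, $\mlie^+ = \Locsys^{-1,1}$, $\mlie^- = \Locsys^{1,-1}$. In particular, $\Filho^{-1}/\Filho^0$ identifies as a holomorphic vector bundle with $\mlie^+$, and the Higgs map on $\Filho^{-1}/\Filho^0$ lands in $\Omega^1 \otimes \Filho^{-2}/\Filho^{-1} = 0$ since the adjoint Hodge decomposition has no component of type $(-2,2)$, so it will vanish automatically.

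To arrange unipotent monodromy, I would first replace $\Latt$ by a neat normal subgroup $\Latt' \subset \Latt$ of finite index, which exists by a theorem of Borel \cite[\S 17.4]{borel1969introduction}. Setting $\Fico := \Latt' \backslash \Dom$ gives a finite étale covering $\kappa : \Fico \to \Lsav$ with $\Latt'$ arithmetic and neat. By \cite{ash2010smooth}, $\Fico$ admits a toroidal log compactification $(\Comp, \Div)$; neatness of $\Latt'$ ensures that every local monodromy around a branch of $\Div$ is the image under $\ad$ of a unipotent element of $\Latt'$ lying in the unipotent radical of a rational parabolic of $\Group$, hence is unipotent. On $\Dom$, the $\Group$-equivariant isomorphism $\mlie^+ \cong T_x^{1,0}\Dom$ globalizes, by $\Group$-invariance of the PVHS, to an isomorphism of holomorphic vector bundles $T_\Dom \cong \Filho^{-1}/\Filho^0$, which descends to an isomorphism $T_\Fico \cong \Filho^{-1}/\Filho^0$. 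The Hodge metric restricts on $\mlie^+$ to a positive $\Isot$-invariant Hermitian form which, by irreducibility of the $\Isot$-action on $T_x^{1,0}\Dom$ (a consequence of the irreducibility of $\Dom$), is a positive real multiple of the Bergman metric $\Came$. Rescaling the polarization $\Pol$ by the appropriate positive constant then yields equality.

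The main step will be to extend the isomorphism $T_\Fico \cong \Filho^{-1}/\Filho^0$ to a locally split injective morphism $T_\Comp(-\log \Div) \hookrightarrow \ext{\Filho}^{-1}/\ext{\Filho}^0$ on $\Comp$. This is local near each boundary point: by \cite{ash2010smooth}, there is an admissible chart $f : \Disr^\di \to \Comp$ with $f^{-1}(\Div) = \Dipo$, together with local monodromy logarithms $N_1, \dots, N_k \in \glie$ lying in the Lie algebra of the unipotent radical of an associated rational parabolic subgroup of $\Group$. Schmid's nilpotent orbit theorem \cite[Theorem 4.12]{schmid1973variation} then trivializes $\ext{\Locsys}_\Stru$ locally by sections of the form $\exp\bigl(-\tfrac{1}{2\pi\im}\sum_{i=1}^{k}(\log z_i)\, N_i\bigr)\cdot s$ with $s$ a flat multivalued frame, and describes $\ext{\Filho}^\bullet$ as the associated limit filtration. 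A direct local-frame computation should match the logarithmic vector fields $z_i \partial_{z_i}$ ($1 \leq i \leq k$) and $\partial_{z_j}$ ($k < j \leq \di$) with a local frame of $\ext{\Filho}^{-1}/\ext{\Filho}^0$, so that the extended map will actually be an isomorphism and hence a fortiori locally split injective.

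The hard part will be precisely this last matching at the boundary: relating the combinatorics of the toroidal cone data — built from nilpotent elements of $\glie$ — to the local frames of the Deligne–Schmid canonical extension of $\Filho^{-1}/\Filho^0$, and reading off that the cokernel of the extended map vanishes along $\Div$. Since both sides are locally free of rank $\di$ and the map is an isomorphism on $\Fico$, this is equivalent to showing that the extension remains an isomorphism at every boundary point, which is controlled by the explicit form of the Deligne–Schmid extension in terms of the $N_i$'s and the structure of the rational parabolic defining the boundary stratum.
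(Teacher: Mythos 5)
Your overall strategy matches the paper's: Zucker's construction applied to the adjoint representation, passage to a neat finite-index subgroup, a toroidal compactification, and the irreducibility of the $\Isot$-action to identify the Hodge metric with a multiple of $\Came$. One genuine (and legitimate) divergence: you take the PVHS with fiber $\glie$ and use the isomorphism $T_{\Fico}\cong\Filho^{-1}/\Filho^{0}=\mlie^{+}$, so the kernel-of-Higgs condition holds for the trivial reason that $\Filho^{-2}/\Filho^{-1}=0$. The paper instead takes the PVHS with fiber $\End(\glie)$ and sends $v$ to the Higgs field $\theta(v)$ of the adjoint PVHS, viewed as a section of $\ext{\Filho}^{-1}/\ext{\Filho}^{0}$ of $\End(\glie)$; injectivity over $\Fico$ is then checked by computing $\theta(v)z=[v,z]=-\im v$, and the kernel condition becomes nontrivial and is outsourced to \cite[Lemma 3.1]{brunebarbe2018symmetric} (integrability $\theta\wedge\theta=0$). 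Your variant is simpler on this point and still delivers everything the statement requires. A minor quibble: unipotence of the local monodromy is not a consequence of neatness alone; either you argue as you sketch that the boundary monodromies lie in unipotent radicals of rational parabolics (a fact about toroidal compactifications, not about neatness), or you argue as the paper does, via integrality of the lattice representation, Borel's quasi-unipotence theorem, and then neatness to kill the roots of unity.

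The genuine gap is the step you yourself flag as ``the hard part'': extending the isomorphism $T_{\Fico}\cong\Filho^{-1}/\Filho^{0}$ to a locally split injective morphism $T_{\Comp}(-\log\Div)\to\ext{\Filho}^{-1}/\ext{\Filho}^{0}$. An isomorphism of bundles over $\Fico$ need not even extend to a morphism between two prescribed extensions across $\Div$ (it could acquire poles), let alone remain locally split there, so both existence and splitness at the boundary must be proved, and your proposal only gestures at a local-frame computation with nilpotent orbits and toroidal cone data without carrying it out. The paper closes this gap without any such computation by invoking Mumford's theory of good metrics: for a toroidal compactification of a neat arithmetic quotient, the Hodge metric on $\ext{\Filho}^{p}/\ext{\Filho}^{q}$ is good in the sense of \cite{mumford1977hirzebruch}, and $T_{\Comp}(-\log\Div)$ is the good extension of $(T_{\Fico},\Came)$; since the map over $\Fico$ is an isometry up to a constant, uniqueness of good extensions forces it to extend to a locally split injection (indeed an isomorphism onto a subbundle) over all of $\Comp$. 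If you want to keep your explicit nilpotent-orbit route, you would need to actually perform the matching of $z_{i}\partial_{z_{i}}$ with a frame of the canonical extension; as written, the argument is incomplete at precisely the point where the proposition has content beyond the interior.
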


\begin{proof}
	Construct as previously a PVHS $(\Locsys_{\R}^0, \Pol^0, \Filho^{0 \bullet})$ over $\Lsav$ starting from the adjoint representation of $\Group$ over $\glie$. As $\Latt$ is an arithmetic lattice, it has a finite index subgroup whose representation on $\glie$ is integral. This new lattice has a neat finite index subgroup $\Latt'$. We set $\kappa: \Fico := \Latt' \setminus \Dom \to \Lsav = \Latt \setminus \Dom$. We define $(\Locsys_{\R}, \Pol, \Filho^{\bullet})$ as the PVHS on $\Fico$ constructed from the representation of $\Group$ over $\R$-linear endomorphisms $\End(\glie)$ (induced by adjoint representation).
	
	As $\Fico$ is a locally symmetric variety and $\Latt'$ is arithmetic and neat, $\Fico$ admits a toroidal compactification $(\Comp, \Div)$. As $\kappa^* \Locsys^0$ is integral, Borel's theorem \cite[Theorem 6.5]{deligne2006travaux} implies that its monodromy is quasiunipotent around $\Div$. As $\Latt'$ is neat, it is unipotent. As $\Locsys = \End(\kappa^*\Locsys^0)$, its monodromy around $\Div$ is unipotent too.
	
	The Higgs map of $(\kappa^*\Locsys_{\R}^0, \kappa^*\Pol^0, \kappa^*\Filho^{0\bullet})$ induces a morphism of $\Stru_{\Comp}$-modules $\theta: T_{\Comp}(- \log \Div) \to \ext{\Filho}^{-1} / \ext{\Filho}^0$. It is locally split injective over $\Fico$ because for $v \in \mlie^+$, $\theta(v)z = [v,z] = - \im v$. It is also locally split injective over $\Div$ because $T_{\Comp}(- \log \Div)$ is the good extension of $T_{\Fico}$ is the sense of \cite{mumford1977hirzebruch}. Hence the pull-back $\theta^*\home$ of the Hodge metric on $\Locsys$ is an Hermitian metric on $\Fico$. Finally, if we pull back the PVHS on $\Dom$, the Higgs map is equivariant with respect to the action of $\Group$. As $\Group$ acts transitively on $\Dom$ and $\Isot$ acts irreducibly on $T_{\Dom,x}$, $\theta^*\home$ is a multiple of (the metric induced by) $\Came$. Hence we can change $\Pol$ by a multiplicative constant so as over $\Fico$, $\theta^*\home = \Came$. For the proof that the image of $\theta$ is in the kernel of the Higgs map, see \cite[Lemma 3.1]{brunebarbe2018symmetric}.
\end{proof}

\section{Base loci of a locally symmetric variety}\label{section:main}

Now we are ready to prove Theorem \ref{thm:main}. We begin with a technical lemma.

\begin{lem}\label{lem:comp}
	Let $r \in (0, e^{-1})$.
	\begin{itemize}
		\item Let $f$ be a smooth positive real-valued function over $\Ppdi$ such that $f^{-1}$ has at most log growth.
		\item Let $a$ be a holomorphic function over $\Disr^\di$ such that $\{a = 0\} \subset \Dipo$.
	\end{itemize}
	Let $g := \lvert a \rvert^2$. Then there exists $r' \in (0,r)$ such that $\frac{\partial g}{f+g}$ is nearly bounded on $(\Disk_{r'}^*)^\di$.
\end{lem}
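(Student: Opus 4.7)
The plan is to show that every Poincaré-basis coefficient of $\frac{\partial g}{f+g}$ is a nearly bounded function, for uniform parameters.

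\textbf{Step 1: Factorization.} Let $n_i \geq 0$ be the order of vanishing of $a$ along $\{z_i = 0\}$ and write $a = z_1^{n_1}\cdots z_\di^{n_\di}\cdot u$, with $u$ holomorphic on $\Disr^\di$ and not divisible by any $z_i$. Since $\{u = 0\} \subset \Dipo$ while no irreducible component of $\{u = 0\}$ can equal any $\{z_i = 0\}$ (by maximality of the $n_i$), $\{u=0\}$ has no divisor through the origin and is empty near $0$. Shrinking $r$ to some $r' \in (0, r)$, the functions $|u|$, $|u|^{-1}$, and $|\partial_i u|$ are uniformly bounded on $\Disk_{r'}^\di$.

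\textbf{Step 2: Reducing to $h_i$.} A direct calculation using $\partial g = \bar a\,\partial a$ and the factorization yields, for the coefficient $c_i$ of $\dif z_i/(|z_i|(-\ln|z_i|))$ in $\frac{\partial g}{f+g}$, an inequality
\[
|c_i| \leq C_0\,\frac{g}{f+g} + n_i\,\frac{g(-\ln|z_i|)}{f+g},
\]
where $C_0$ depends only on the bounds from Step 1 and $r'$. The first summand is at most $C_0$; hence if $n_i = 0$ then $|c_i|$ is globally bounded. For $n_i \geq 1$, it suffices to prove that $h_i := \frac{g(-\ln|z_i|)}{f+g}$ is nearly bounded.

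\textbf{Step 3: Dichotomy on $\Thin_i^{\Const,\epsilon}$.} Inside $\Thin_i^{\Const,\epsilon}$, the trivial bound $g/(f+g) \leq 1$ gives $h_i \leq -\ln|z_i|$, matching the nearly-bounded form $D(-\ln|z_i|)^p$ with $p = 1$. Outside $\Thin_i^{\Const,\epsilon}$, one has $-\ln|z_j| < \Const\, e^{\epsilon(-\ln|z_i|)}$ for all $j \neq i$. Combined with $g \leq M^2|z_i|^{2n_i}$ (from the factorization and $|z_j| \leq r' < 1$) and $f^{-1} \leq A\prod_j(-\ln|z_j|)^k$, the estimate $h_i \leq g(-\ln|z_i|)/f$ becomes
\[
h_i \leq C_1\,e^{-(2n_i - k(\di-1)\epsilon)(-\ln|z_i|)}\,(-\ln|z_i|)^{k+1}.
\]
Choosing $\epsilon = 1/(k\di)$ makes the coefficient $2n_i - k(\di-1)\epsilon$ strictly positive uniformly in $i$ (since $n_i \geq 1$), so the right-hand side is bounded on $(\Disk_{r'}^*)^\di$. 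Taking maxima of the bounds over $i$ yields uniform parameters $(\Const, \epsilon, D, 1)$, showing that each $c_i$, and thus $\frac{\partial g}{f+g}$, is nearly bounded.

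The main obstacle is the choice of $\epsilon$: it must be small enough that the log-growth of $f^{-1}$ on the complement of $\Thin_i^{\Const,\epsilon}$ is dominated by the holomorphic vanishing $|z_i|^{2n_i}$ coming from the factorization. The holomorphic structure of $a$, concentrating its zeros on the coordinate hyperplanes, is precisely what allows this balance.
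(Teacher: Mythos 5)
Your proof is correct and follows essentially the same route as the paper's: factor $a = u\prod_i z_i^{n_i}$ with $u$ non-vanishing, reduce to the functions $\frac{g(-\ln|z_i|)}{f+g}$ for $n_i\geq 1$, bound them by $-\ln|z_i|$ inside $\Thin_i^{\Const,\epsilon}$, and use the log growth of $f^{-1}$ together with $g \lesssim |z_i|^{2n_i}$ outside. The only (harmless) difference is in the endgame: the paper tunes $\Const$ and shrinks $r'$ so that the inequality $-\ln|z_j| < \Const|z_i|^{-\epsilon}$ also holds for $j=i$ and concludes $h_i\leq 1$, whereas you keep the residual factor $(-\ln|z_i|)^{k+1}$ and absorb it into the exponential decay of $|z_i|^{2n_i}$, which works for any $\Const$.
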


\begin{proof}
	Let $(z_i)$ be the coordinates of $\Disr^\di$. Because $\{a = 0\} \subset \Dipo$, there exists nonnegative integers $(n_i) \in \N^\di$ and a non-vanishing holomorphic function $\alpha$ over $\Disr^\di$ such that:
	\begin{equation}\label{eq:deca}
	a = \alpha \prod_{i=1}^\di z_i^{n_i}.
	\end{equation}
	Then:
	\[\frac{\partial g}{g} = \frac{\partial \alpha}{\alpha} + \sum_{i=1}^\di n_i \frac{\partial z_i}{z_i},\]
	hence:
	\begin{equation}
	\frac{\partial g}{f+g} = \frac{g}{f+g}\frac{\partial \alpha}{\alpha} + \sum_{i=1}^\di n_i \frac{g}{f+g}\frac{\partial z_i}{z_i}.\label{eq:decomp}
	\end{equation}
	Consider the first term of (\ref{eq:decomp}). As $f$ and $g$ are positive real-valued on $\Ppdi$, $\frac{g}{f+g}$ is bounded (by $1$) on it. As $\alpha$ is non-vanishing and continuous over $\Disr^\di$ and $\partial \alpha$ is continuous over it, so for each $r' \in (0,r)$, $\frac{\partial \alpha}{\alpha}$ is bounded on $(\Disk_{r'}^*)^\di$. Hence $\frac{g}{f+g}\frac{\partial \alpha}{\alpha}$ is bounded on it too.
	
	So, going back to (\ref{eq:decomp}), it remains to check that for each $i$ such that $n_i > 0$, there exists $r' \in (0,r)$ such that on $(\Disk_{r'}^*)^\di$, $\frac{g}{f+g}\frac{\partial z_i}{z_i}$ is a nearly bounded form. We have:
	\[\frac{g}{f+g}\frac{\partial z_i}{z_i} = \frac{g (- \ln \lvert z_i \rvert)}{f+g} \frac{\partial z_i}{z_i (- \ln \lvert z_i \rvert)}. \]
	As the factor $\frac{\partial z_i}{z_i (- \ln \lvert z_i \rvert)}$ has Poincaré growth, it remains to check that there exists $r' \in (0,r)$ such that on $(\Disk_{r'}^*)^\di$, the positive real-valued function
	\[b := \frac{g (- \ln \lvert z_i \rvert)}{f+g}\]
	is nearly bounded.
	
	To do this, we first need to give the values of the constants $\Const$ and $\epsilon$ that appear in Definition \ref{defi:nb} of nearly bounded functions, and the radius $r' \in (0, r)$ of the polydisk we have to restrict ourselves to. Without loss of generality, we consider the index $i = 1$ and suppose $n_1 > 0$. Because $f^{-1}$ has at most log growth, there are $A \in \R_{>0}, k \in \Ns$ such that for every $z \in \Ppdi$:
	\begin{equation}\label{eq:flog}
	f(z) \geq A \prod_{i=1}^{\di} (- \ln \lvert z_i \rvert)^{-k}.
	\end{equation}
	Let:
	\begin{equation}\label{eq:defep}
	\epsilon := \frac{n_1}{\di k} > 0.
	\end{equation}
	The application:
	\[\application{\xi}{(0,r]}{\R_{>0}}{h}{(-\ln(h) - 1)^{-\frac{1}{\di k}} h^{-\epsilon}}\]
	is continuous, and $\lim\limits_{h \to 0} \xi(h) = +\infty$. Hence $\xi$ is bounded below by a constant $>0$. Moreover, choose any $r'' \in (0, r)$. As $\lvert \alpha(z) \rvert^2$ is positive real-valued continuous on $\Disr^\di$, $\lvert \alpha(z) \rvert^{-\frac{2}{dk}}$ is bounded below by a constant $>0$ on $\Disk_{r''}^\di$. Therefore, we can define the following positive real constant.
	\begin{equation}\label{eq:defcons}
	\Const := \inf_{z \in (\Disk_{r''}^*)^\di} (A^{-1}\lvert \alpha(z) \rvert^2 (-\ln \lvert z_1 \rvert - 1))^{-\frac{1}{\di k}}\lvert z_1 \rvert^{-\epsilon} > 0.
	\end{equation}
	Finally, we choose $r' \in (0, r'')$ small enough so that for every $h \in (0, r')$, $\Const h^{- \epsilon} > - \ln h$.
	
	Now that we have our two constants $\Const$ and $\epsilon$ and our radius $r'$, we can check that $b$ is $(\Const, \epsilon, 1, 1)$-nearly bounded on $\Disk_{r'}^\di$. First, for each $z \in (\Disk_{r'}^*)^\di$, we have $b(z) \leq - \ln \lvert z_1 \rvert$, in particular it is true for $z \in \Thin_1^{\Const, \epsilon}$.
	
	Now, let $z \in (\Disk_{r'}^*)^\di \setminus \Thin_1^{\Const, \epsilon}$, which means that for every $1 < i \leq \di$, $\ln \lvert z_i \rvert > - \Const \lvert z_1 \rvert^{-\epsilon}$. Moreover, we chose $r'$ so that we have $\ln \lvert z_1 \rvert > - \Const \lvert z_1 \rvert^{-\epsilon}$ too. Then, for $1 \leq i \leq n$:
	\begin{align}
	\ln \lvert z_i \rvert &> - \Const \lvert z_1 \rvert^{-\epsilon} \\
	&\geq - \left ( A^{-1}\lvert \alpha(z) \rvert^2(-\ln \lvert z_1 \rvert - 1)\lvert z_1 \rvert^{2n_1} \right )^{-\frac{1}{\di k}} \label{eq:useep} \\
	&\geq - \left ( A^{-1}\lvert \alpha(z) \rvert^2(-\ln \lvert z_1 \rvert - 1) \prod_{j=1}^\di \lvert z_j \rvert^{2n_j} \right )^{-\frac{1}{\di k}}, \label{eq:majgro}
	\end{align}
	using the definition (\ref{eq:defcons}) of $\Const$ and the definition (\ref{eq:defep}) of $\epsilon$ for the line (\ref{eq:useep}), and the fact that for every $1 \leq j \leq n$, $\lvert z_j \rvert < 1$ for the line (\ref{eq:majgro}). Therefore:
	\begin{equation}\label{eq:maj2}
	(- \ln \lvert z_i \rvert)^{-k} > \left ( A^{-1}\lvert \alpha(z) \rvert^2(-\ln \lvert z_1 \rvert - 1) \prod_{j=1}^\di \lvert z_j \rvert^{2n_j} \right )^{\frac{1}{\di}}
	\end{equation}
	Hence:
	\begin{align}
	f(z) &\geq A \prod_{i=1}^\di (-\ln \lvert z_i \rvert)^{-k} \label{eq:reflog} \\
	& > (-\ln \lvert z_1 \rvert - 1) \lvert \alpha(z) \rvert^2 \prod_{i=1}^\di \lvert z_i \rvert^{2n_i} \label{eq:remajgro} \\
	& = (-\ln \lvert z_1 \rvert - 1) g(z), \label{eq:usedeca}
	\end{align}
	using (\ref{eq:flog}) for the line (\ref{eq:reflog}), (\ref{eq:maj2}) for the line (\ref{eq:remajgro}) and (\ref{eq:deca}) for the line (\ref{eq:usedeca}). Eventually, we get $f(z) + g(z) > (- \ln \lvert z_1 \rvert)g(z)$. Hence by definition, $b$ is bounded (by $1$) outside of $\Thin_1^{\Const, \epsilon}$.
\end{proof}

Let us get back to the notations of Subsection \ref{section:lsav}. Moreover, we suppose $\Dom$ of rank at least $2$. We set $\Lsav$ a smooth complex algebraic variety whose universal covering is biholomorphic to $\Dom$. We denote $\pi: \QprB \to \Lsav$ the canonical projection. Let $\Taut$ be the tautological line bundle over $\QprB$, $\Stru(p) := \Taut^{\otimes p}$ its tensor power (for $p \in \Z$), $\came$ the Hermitian metric induced on $\Omun$ by the canonical Kähler metric $\Came$ on $\Lsav$, quotient of the Bergman metric of $\Dom$. Let $\nu$ be a Kähler form on $\QprB$ (for example, the sum of $\Chern(\Stru(-1), \came)$ and of the canonical Kähler form over $\Lsav$). In the following, we will forget to mention $\Omun$ in the notations of first Chern forms $\Chern$.

\bigskip

Let $p \in \Ns$, let $\gsec$ be a global section of $\Stru(p)$ over $\QprB$, suppose that it comes from a global section of $\Sym^p\Omega_{\Comp}^1(\log \Div)$ for some log compactification $(\Comp, \Div)$ of $\Lsav$ (it does not depend on the choice of the compactification). We have to prove that $\gsec$ vanishes on $\Charac$ (recall that $\Charac := \Charac_{r-1}$). Remark that $\vame := (\came^p + \gsec \otimes \bar{\gsec})^{\frac{1}{p}}$ defines an Hermitian metric of seminegative curvature on $\Omun$ (see \cite[Proof of Proposition 2 p.260]{mok1989metric}). Let $u := \ln\left( \frac{\vame}{\came} \right)$. Then $\chri := \partial u$ is the difference between the Chern connections of $\vame$ and $\came$, it is a differential form of type $(1,0)$ defined over $\QprB$ (which does not depend on the choice of local trivializations). Fix a $k$ such that $1 \leq k < r$, let $\incl: \Charac_k^{\reg} \hookrightarrow \QprB$ be the inclusion. Let $\beta := \Chern(\came)^{2\di-2\nul(k)-1} \wedge \nu^{\nul(k)-1}$, with $\nul(k)$ defined in Subsection \ref{subsection:char}.

\begin{lem}
	We have:
	\[\incl^*(\bar{\chri} \wedge \chri \wedge \beta) = 0.\]
\end{lem}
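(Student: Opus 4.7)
The plan is to reduce the lemma to a pointwise vanishing of $\chri$ along the kernel of $\incl^*\Chern(\came)$. By Proposition \ref{prop:rank}, the pull-back $\incl^*\Chern(\came)$ has complex rank exactly $2\di - 2\nul(k) - 1$ at every point $y \in \Charac_k^\reg$. Writing $K \subset T\Charac_k^\reg$ for its kernel sub-bundle (of complex rank $\nul(k)$, identified via $d\pi$ with the horizontal distribution of the null subspaces $\Null_v$), the top power $\incl^*\Chern(\came)^{2\di - 2\nul(k) - 1}$ is pointwise a nonzero volume form on the quotient $T\Charac_k^\reg/K$. Consequently, for any $(\nul(k), \nul(k))$-form $\alpha$ on $\Charac_k^\reg$, the wedge $\incl^*\Chern(\came)^{2\di - 2\nul(k) - 1} \wedge \alpha$ vanishes at $y$ if and only if the restriction $\alpha|_{K_y}$ is zero as a top-degree form on $K_y$.

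I would apply this with $\alpha := \incl^*\nu^{\nul(k) - 1} \wedge \incl^*(\bar\chri \wedge \chri)$. Since $\nu$ is Kähler, $\nu|_{K_y}$ is positive definite, and a direct trace computation shows that $\nu|_{K_y}^{\nul(k)-1} \wedge (\bar\chri \wedge \chri)|_{K_y}$ is a nonzero multiple of the volume form on $K_y$ exactly when $\chri|_{K_y} \neq 0$. The lemma therefore reduces to the pointwise assertion: for every $y \in \Charac_k^\reg$, $\chri$ vanishes on $K_y$.

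To prove this pointwise vanishing, I write $u = \frac{1}{p}\ln(1 + \psi)$ with $\psi = |\gsec|^2_{\came^{-p}}$, and exploit the identity $\frac{i}{2\pi}\partial\bar\partial u = \Chern(\came) - \Chern(\vame)$ that follows from $\vame = e^u \came$. Restricting to $K_y$, where $\Chern(\came)$ vanishes by construction, gives $\frac{i}{2\pi}\partial\bar\partial u|_{K_y} = -\Chern(\vame)|_{K_y} \geq 0$, using the seminegative curvature of $\vame$. Combined with the Poincaré--Lelong formula applied to $\gsec$ viewed as a holomorphic section of $\Stru(p)$, and with the explicit identification of $K_y$ as the degeneracy locus of the Hermitian form $w \mapsto \Ricu(v, \bar v, \cdot, \bar w)$ recalled in Subsection \ref{subsection:char}, this plurisubharmonicity forces the first-order vanishing $\chri|_{K_y} = 0$.

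The main obstacle is this last extraction of first-order flatness from second-order plurisubharmonicity. In Mok's compact setting (see \cite[Chapter 10]{mok1989metric}) the analogous vanishing is obtained by integrating over closed subvarieties tangent to $K$ and invoking a global positivity constraint; in the present non-compact context such integration is not directly available, so the argument must be carried out pointwise, relying on the orbit description of $\Charac_k(\Dom)^\reg$ under the action of the complexified group $\Group^{\C}$ recalled in Subsection \ref{subsection:char}.
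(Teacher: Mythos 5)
Your reduction of the lemma to the pointwise statement ``$\chri$ vanishes on the kernel $K_y$ of $\incl^*\Chern(\came)$'' is a legitimate reformulation (the dimension count via Proposition \ref{prop:rank} and the role of the K\"ahler form $\nu$ are correct), but the proposal then stalls at exactly the step that carries all the content. From the restriction $\partial\bar\partial u\vert_{K_y}$ having a definite sign you cannot conclude anything about the first-order object $\chri = \partial u$ at the point $y$: a function that is plurisubharmonic (or plurisuperharmonic) along a distribution can perfectly well have nonvanishing gradient at every point, and no pointwise maximum principle applies. The sentence ``this plurisubharmonicity forces the first-order vanishing $\chri\vert_{K_y}=0$'' is precisely the assertion to be proved, and neither the Poincar\'e--Lelong formula nor the orbit description of $\Charac_k(\Dom)^{\reg}$ supplies it --- note that the $\Group$-invariance of $u$ on $\Charac_{r-1}(\Dom)$ appears in the paper only \emph{after} this lemma, as a consequence of it, not as an input.

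The paper's actual proof is a global Stokes argument performed twice on a desingularization of the closure $\bar{\Charac_k}$ in a log compactification: one first shows that $\dif(\chri\wedge\beta)=\Chern(\vame)\wedge\beta$ is semipositive and has vanishing integral, hence vanishes identically; Leibniz then gives $\dif(u\chri\wedge\beta)=\bar\chri\wedge\chri\wedge\beta$, again semipositive with vanishing integral. Making these integrals legitimate in the non-compact setting is the entire point of Sections \ref{section:chernweil} and \ref{section:pvhs}: one must prove that $\chri\wedge\beta$ is almost bounded and $u\chri\wedge\beta$ is almost quasibounded near the boundary, which requires the PVHS realization of the metric (Proposition \ref{prop:exvhs}, hence Margulis arithmeticity), the norm estimates of Cattani--Kaplan--Schmid (Theorem \ref{thm:homelog}), the almost boundedness of connection forms (Proposition \ref{prop:homealm}), the elementary but delicate Lemma \ref{lem:comp}, and the Stokes-type statement Proposition \ref{prop:intnul}. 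Your closing remark that ``such integration is not directly available, so the argument must be carried out pointwise'' inverts the logic: the integration cannot be avoided, it is \emph{made} available by the growth estimates, and without it the proof does not exist.
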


\begin{proof}
	It is obviously sufficient to prove the lemma for a finite étale covering of $\Lsav$. Then, thanks to Proposition \ref{prop:lsav}, we can suppose that $\Lsav$ is a locally symmetric variety. As the rank of $\Dom$ is at least $2$, the superrigidity theorem of Margulis \cite{margulis1991discrete} implies that $\Latt$ is arithmetic. Hence, we can suppose by Proposition \ref{prop:exvhs} that we have a PVHS $(\Locsys_{\R}, \Pol, \Filho^\bullet)$ over $\Lsav$ and a log compactification $(\Comp, \Div)$ of $\Lsav$ such that $\Locsys$ has unipotent monodromy and there is a locally split injective morphism of holomorphic complex vector bundles $T_{\Comp}(-\log \Div) \to \ext{\Filho}^{-1} / \ext{\Filho}^0$ such that over $\Lsav$, the pull-back of the Hodge metric is the metric induced by $\Came$.
	
	We extend $\pi$ as the canonical projection $\PrB \to \Comp$ and $\Taut$ as the tautological line bundle over it. By definition, $\gsec$ is (the restriction of) a global section of $\Stru(p)$ over $\PrB$. We denote $\Div' := \pi^{-1} \Div$, hence $(\PrB, \Div')$ is a log compactification of $\QprB$.
	
	Let $\bar{\Charac_k}$ be the closure of $\Charac_k$ in $\PrB$ for the Hausdorff topology. Thanks to Proposition \ref{prop:alg}, $\bar{\Charac_k}$ is a Zariski closed subvariety of $\PrB$, and its intersection with $\Proj \Omega_{\Lsav}^1$ is $\Charac_k$. Hironaka's desingularization theorem gives us a smooth projective variety $A$ and a proper birational morphism $b: A \to \bar{\Charac_k}$ such that $\Div_1 := b^{-1}(\{\gsec = 0\} \cup \Div')$ is a normal crossing divisor. We now work on $A$ and, in order to lighten notations, we will forget the mention of ``$b^* \incl^*$'' before $\Stru_{\PrB}$-modules and differential forms.
	
	Let us show that $\chri \wedge \beta$ is almost bounded and $u \chri \wedge \beta$ is almost quasibounded. Let $r \in (0, e^{-1})$, let $f:\Disr^\di \to A$ be an admissible map on $(A, \Div_1)$. Let $s$ be a trivializing section of $\Omun$ defined in a neighborhood of $f(0)$. Thanks to Proposition \ref{prop:homealm} (``Connection form of the Hodge metric on a subquotient of a PVHS with unipotent monodromy near a normal crossing divisor is almost bounded''), there exists $r' \in (0,r)$ such that $s$ is defined on $f(\Disk_{r'}^\di)$ and the pull-backs of $\partial \ln \lVert s \rVert_{\came}^2$ and $\Chern(\came)$ on $(\Disk_{r'}^*)^\di$ are pre-almost bounded. There exists a divisor $E$ on $\Disk_{r'}^\di$ such that the pull-backs of $f^*\partial \ln \lVert s \rVert_{\came}^2$ and $f^*\Chern(\came)$ by any desingularization of $E$ are locally nearly bounded. Let $g: (\Anman, \Div_\Anman) \to (\Disk_{r'}^\di, \Dipo)$ be a desingularization of $E$. Let $t \in (0,e^{-1})$, let $f_1: \Disk_{t}^\di \to \Anman$ be an admissible chart on $(\Anman, \Div_\Anman)$. Then, there is $t' \in (0,t)$ such that the restrictions of $f_1^* g^* f^* \partial \ln \lVert s \rVert_{\came}^2$ and $f_1^* g^* f^*\Chern(\came)$ to $(\Disk_{t'}^*)^\di$ are nearly bounded. It remains to show that $f_1^* g^* f^* (\chri \wedge \beta)$ is nearly bounded and $f_1^* g^* f^* (u \chri \wedge \beta)$ is quasibounded on $\Disk_{t'}^\di$. Now, we work on $\Disk_{t'}^\di$, and we forget to mention ``$f_1^* g^* f^*$'' before differential forms.
	
	We have:
	\begin{align*}
	\chri &= \partial \ln \left ( \frac{\lVert s \rVert_{\vame}^2}{\lVert s \rVert_{\came}^2} \right )\\
	&= \frac{1}{p} \partial \ln \left ( \lVert s \rVert_{\came}^{2p} + \lvert \gsec(s^p) \rvert^2 \right ) - \partial \ln \left ( \lVert s \rVert_{\came}^2 \right )
	\end{align*}
	The right term is nearly bounded. Hence, it remains to show that the left term is nearly bounded. We have:
	\begin{equation*}
	\partial \ln (\lVert s \rVert_{\came}^{2p} + \lvert \gsec(s^p) \rvert^2) = \frac{\lVert s \rVert_{\came}^{2p}}{\lVert s \rVert_{\came}^{2p} + \lvert \gsec(s^p) \rvert^2} \partial \ln (\lVert s \rVert_{\came}^{2p}) + \frac{\partial (\lvert \gsec(s^p) \rvert^2)}{\lVert s \rVert_{\came}^{2p} + \lvert \gsec(s^p) \rvert^2}.
	\end{equation*}
	
	The first term is nearly bounded because $\frac{\lVert s \rVert_{\came}^{2p}}{\lVert s \rVert_{\came}^{2p} + \lvert \gsec(s^p) \rvert^2}$ is bounded by $1$ and $\partial \ln \lVert s \rVert_{\came}^2$ is nearly bounded. Moreover, we can pull back the PVHS and apply Theorem \ref{thm:homelog} (``Hodge metric has at most log growth at singularities of PVHS with unipotent monodromy'') to obtain that for each $i \in \Z$, $\lVert s \rVert_{\came}^i$ has at most log growth. Then Lemma \ref{lem:comp} tells that the second term is nearly bounded. Therefore $\chri$ is nearly bounded and, as $\beta$ too, $\chri \wedge \beta$ is nearly bounded. Moreover, as for each $i \in \Z$, $\lVert s \rVert_{\came}^i$ has at most log growth, so $u$ has at most log-log growth. Hence, on $A$, $f^*(\chri \wedge \beta)$ is almost bounded and $f^*(u \chri \wedge \beta)$ is almost quasibounded.
	
	Because of $\dif \chri = \Chern(\vame) - \Chern(\came)$ and Proposition \ref{prop:rank} (``on $\Charac_k^{\reg}$, $\Chern(\came)$ has rank $2\di - 2\nul(k)-1$''), we have $\dif f^*(\chri \wedge \beta) = f^*(\Chern(\vame) \wedge \beta)$, which is a semipositive form. Indeed, as the Bergman metric $\Came$ has a seminegative Griffiths curvature, the formula \ref{eq:curv} implies that the first Chern form $\Chern(\Taut, \came)$ is semipositive. Hence, Proposition \ref{prop:intnul} gives us $\int_{A} \dif f^*(\chri \wedge \beta) = 0$, and by positivity, $\dif f^*(\chri \wedge \beta) = 0$. Hence by Leibniz formula: $\dif f^*(u \chri \wedge \beta) = f^*(\bar{\chri} \wedge \chri \wedge \beta)$. As the first member is almost quasibounded and the second member is clearly semipositive, we obtain in the same way $f^*(\bar{\chri} \wedge \chri \wedge \beta) = 0$. Thus $\incl^*(\bar{\chri} \wedge \chri \wedge \beta) = 0$.
\end{proof}

In the previous proof, we could not simply use the fact that $\Came$ is good (in the sense of \cite{mumford1977hirzebruch}) because this property is not necessarily shared by $\came$ (see \cite[Example 3.9]{i2007generalisations}).

\bigskip

Now that we have the previous lemma, the proof of the inclusion $\Charac_{k-1} \subset \PBL(\Lsav)$ is exactly the same as Mok's one in the compact case. First \cite[Chapter 6 (3.2)]{mok1989metric} gives the following lemma.

\begin{lem}
	The pull-back of $u$ over $\Charac_{r-1}(\Dom) \subset \Fib$ is invariant under the action of $\Group$.
\end{lem}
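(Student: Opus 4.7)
The plan is to extract the $\Group$-invariance of $u|_{\Charac_{r-1}(\Dom)}$ from the pointwise identity $\iota^*(\bar\chri \wedge \chri \wedge \beta) = 0$ on $\Charac_{r-1}(\Dom)^{\reg}$ supplied by the previous lemma, by the same linear-algebraic and group-theoretic analysis Mok uses in the compact case.

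First I would lift the identity from $\Lsav$ to $\Dom$: the forms $\chri = \partial u$, $\came$, and $\beta$ pull back along the universal cover to $\Latt$-invariant objects on $\Fib$, and $\Chern(\came)$ and $\nu$ (chosen for instance as $\nu = \Chern(\came) + \Came$) are moreover $\Group$-invariant. Since the identity is pointwise, it continues to hold on $\Charac_{r-1}(\Dom)^{\reg}$ with no appeal to the compactness of $\Lsav$.

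Next I would perform the pointwise analysis at a point $y = (x, [v]) \in \Charac_{r-1}(\Dom)^{\reg}$. By Proposition \ref{prop:rank}, $\Chern(\came)$ has rank $2\di - 2\nul(r-1) - 1$ on $\Charac_{r-1}(\Dom)^{\reg}$, so $\Chern(\came)^{2\di - 2\nul(r-1) - 1}$ is a non-zero semi-positive form whose kernel is exactly the null distribution $\Null_v$ tangent to $\Charac_{r-1}(\Dom)$. Wedging with the positive factor $\nu^{\nul(r-1) - 1}$ yields a semi-positive pairing on $(1,0)$-forms, and the vanishing of the semi-positive top form $\bar\chri \wedge \chri \wedge \beta$ then forces $\chri$ to be constrained to a subspace explicitly described in terms of $\Null_v$. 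Combining this with the polydisk theorem (Theorem \ref{thm:polyd}), which identifies $\Null_v$ with the complement of the diagonal directions of the maximal polydisk through $x$ in direction $v$, and with the fact that $\Charac_{r-1}^{\reg, x}$ is a single $\Isot$-orbit in $\Proj \Omega_{\Dom, x}^1$ while $\Group$ acts transitively on $\Dom$ --- so $\Group$ is transitive on $\Charac_{r-1}(\Dom)^{\reg}$ --- one deduces that $du$ vanishes on all $\Group$-orbit directions, yielding the claimed invariance.

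The main obstacle is the linear-algebraic step tying the vanishing of the top form to the precise subspace in which $\chri$ must lie, and then matching that subspace with the $\Group$-orbit directions on the characteristic subvariety. Since this step is entirely pointwise and purely group-theoretic --- relying only on the homogeneity of the Bergman geometry and not on compactness of any quotient --- it is literally Mok's argument from \cite[Chapter 6, (3.2)]{mok1989metric}, which one can invoke directly once the analytic hypothesis has been established, as the previous lemma does.
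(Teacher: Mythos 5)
Your bottom line coincides with the paper's: once the previous lemma supplies the identity $\incl^*(\bar{\chri} \wedge \chri \wedge \beta) = 0$ on $\Charac_{r-1}(\Dom)^{\reg}$ --- the only step of Mok's compact-case argument that required integrating over a closed subvariety --- the present lemma is obtained by citing \cite[Chapter 6 (3.2)]{mok1989metric} verbatim. That is literally all the paper does, and your observation that the remaining steps are pointwise and group-theoretic, hence insensitive to non-compactness of the quotient, is exactly the justification the paper relies on.

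Two points in your sketch of Mok's internal mechanism are inaccurate, and would matter if you wrote the argument out rather than cited it. First, $\Charac_{r-1}^{\reg,x}$ is a single orbit of the \emph{complexified} isotropy group $\Isot_x^{\C}$ (as recalled in Subsection \ref{subsection:char}), not of $\Isot_x$ itself: already for the type I domain of $3\times 3$ matrices, the $\Isot$-orbits of lines of rank-$2$ vectors are separated by the ratio of the two nonzero singular values. So $\Group$ is \emph{not} transitive on $\Charac_{r-1}(\Dom)^{\reg}$ in general. Second, the vanishing of the semipositive top form $\bar{\chri}\wedge\chri\wedge\beta$ only forces $\partial u$ to vanish on the kernel of $\Chern(\came)$ restricted to $T\Charac_{r-1}(\Dom)^{\reg}$, i.e.\ on the null distribution of complex dimension $\nul(r-1)$, which is a proper subbundle of the tangent bundle --- not ``on all $\Group$-orbit directions.'' The pointwise conclusion is therefore only that $u$ is constant along the leaves of the null foliation; the upgrade to $\Group$-invariance is a genuinely global step, combining this leafwise constancy with the $\Latt$-invariance of $u$ and the distribution of the $\Latt$-translates of the null leaves (this is where $\Latt$ being a lattice is used; compactness of the quotient is not, which is why the citation remains legitimate here). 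Since both you and the paper ultimately delegate to Mok, the proposal stands, but the mechanism you describe is not the one that makes his argument work.
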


Then the proof of \cite[Appendix 4 Proposition 4]{mok1989metric} gives the inclusion we want.

\begin{lem}
	We have $\Charac \subset \PBL(\Lsav)$.
\end{lem}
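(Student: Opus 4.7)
The plan is to follow the strategy of \cite[Appendix 4, Proposition 4]{mok1989metric} for the compact case, feeding the two preceding lemmas in as the essential analytic inputs. Concretely, it suffices to show that every global section $\gsec$ of $\Stru(p)$ coming from a logarithmic symmetric form vanishes on $\Charac = \Charac_{r-1}$; taking the intersection over all such $\gsec$ and all $p$ then gives $\Charac \subset \PBL(\Lsav)$. The first lemma provides the pointwise identity $\incl^*(\bar{\chri} \wedge \chri \wedge \beta) = 0$ on $\Charac_k^{\reg}$, playing the role filled by Stokes' theorem in Mok's compact argument; the second lemma provides the $\Group$-invariance of the pull-back of $u$ to $\Charac_{r-1}(\Dom)$.

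The first step is a pointwise positivity argument. By Proposition \ref{prop:rank}, $\Chern(\came)$ has rank $2\di - 2\nul(k) - 1$ on $\Charac_k^{\reg}$, and it is semipositive since the Bergman curvature is seminegative (via equation (\ref{eq:curv})); combined with the Kähler positivity of $\nu$, the form $\beta = \Chern(\came)^{2\di - 2\nul(k) - 1} \wedge \nu^{\nul(k)-1}$ is semipositive with kernel exactly the null directions of $\Chern(\came)$ on $\Charac_k^{\reg}$. Since $\bar{\chri} \wedge \chri$ is itself a semipositive $(1,1)$-form, the vanishing of the wedge product forces $\chri|_{\Charac_k^{\reg}}$ to vanish on all directions outside this null subspace, so $u$ is locally constant transversally along $\Charac_k^{\reg}$. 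Specialising to $k = r-1$ and using that $\Group^{\C}$ acts transitively on $\Charac_{r-1}(\Dom)^{\reg}$ (Subsection \ref{subsection:char}), the $\Group$-invariance from the second lemma upgrades local constancy to: the pull-back of $u$ to $\Charac_{r-1}(\Dom)^{\reg}$, and hence to $\Charac_{r-1}(\Dom)$ by density, is an absolute constant.

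Finally, constancy of $u = \frac{1}{p}\ln(1 + \lvert \gsec \rvert^2 / \came^p)$ on $\Charac$ means $\lvert \gsec \rvert^2/\came^p$ is constant there; a boundary growth analysis, using Theorem \ref{thm:homelog} on the at-most-log growth of the Hodge metric together with the logarithmic extension of $\gsec$ provided by Proposition \ref{prop:abreq}, forces this constant to be zero (in Mok's compact setting the same conclusion is immediate via a maximum-principle argument). Hence $\gsec$ vanishes on $\Charac$, and the inclusion $\Charac \subset \PBL(\Lsav)$ follows. The main obstacle is this last step: transferring Mok's maximum-principle argument from the compact to the non-compact case, for which the PVHS machinery of Section \ref{section:pvhs} and the growth estimates of Section \ref{section:chernweil} have precisely been designed.
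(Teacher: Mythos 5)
Your reconstruction has the right skeleton (Lemma A as the analytic input, Lemma B as the invariance statement, conclude that every $\gsec$ vanishes on $\Charac$), but two steps are wrong as written and the decisive one is missing. First, the positivity argument is backwards. On $\Charac_k(\Dom)^{\reg}$, of complex dimension $2\di-\nul(k)-1$, the form $\incl^*\Chern(\came)$ is semipositive of rank exactly $2\di-2\nul(k)-1$ (Proposition \ref{prop:rank}), so the factor $\Chern(\came)^{2\di-2\nul(k)-1}$ in $\beta$ already saturates all of its positive directions; in $\incl^*(\bar{\chri}\wedge\chri\wedge\beta)$ the remaining $\nul(k)$ complex slots must be filled by $\nu^{\nul(k)-1}\wedge\bar{\chri}\wedge\chri$ restricted to the kernel of $\incl^*\Chern(\came)$, where $\nu$ is strictly positive. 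The vanishing therefore forces $\partial u$ to vanish \emph{on} that kernel, i.e.\ along the null distribution tangent to $\Charac_k(\Dom)^{\reg}$ --- not on the directions outside it. So $u$ is constant along the leaves of the null foliation, not ``transversally''. Second, your passage to constancy is not justified: $u$ is only $\Group$-invariant (second lemma), not $\Group^{\C}$-invariant, and $\Group$ does not act transitively on $\Charac_{r-1}(\Dom)^{\reg}$ when $r\geq 3$ (the $\Isot$-orbits in a fiber are separated by the ratios of the polydisk coordinates of a representative vector). Constancy does hold, but one must argue as Mok does, e.g.\ by noting that the closure of every $\Group$-orbit in $\Charac_{r-1}(\Dom)$ meets $\Charac_1(\Dom)$, which is a single $\Group$-orbit.

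The more serious gap is the final step, which you explicitly leave open and for which you propose the wrong tool. Once $c:=\lvert\gsec(v^{\otimes p})\rvert^2/\lVert v\rVert_{\came}^{2p}$ is known to be constant on $\Charac_{r-1}(\Dom)$, the conclusion $c=0$ is a purely interior statement on the universal cover and is identical in the compact and non-compact cases: restrict $\gsec$ to a totally geodesic disc $\Disk\hookrightarrow\Dom$ tangent to a characteristic (rank-one) vector, write $\gsec\vert_{\Disk}=f(z)\,(\dif z)^{\otimes p}$ with $f$ holomorphic on all of $\Disk$, and observe that constancy of $\lvert f(z)\rvert^2(1-\lvert z\rvert^2)^{2p}$ forces $f\equiv 0$, since otherwise $\ln\lvert f\rvert$ would be harmonic and equal to $-p\ln(1-\lvert z\rvert^2)$ up to a constant. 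No boundary growth analysis near $\Div$ and no appeal to Theorem \ref{thm:homelog} is needed here; the non-compactness is entirely absorbed by the first lemma, which is precisely why the paper can then invoke Mok's compact argument verbatim. As written, your proof both misplaces the difficulty and omits the step that converts constancy of $u$ on the characteristic bundle into the vanishing of $\gsec$ on $\Charac$.
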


We are now able to prove Theorem \ref{thm:main}, which is the equality $\PBL(\Lsav) = \PABL(\Lsav) = \Charac$.

\begin{proof}[Proof (of Theorem \ref{thm:main})]
	The inclusion $\Charac \subset \PBL(\Lsav)$ is the previous lemma. The inclusion $\PBL(\Lsav) \subset \PABL(\Lsav)$ is true for every smooth complex algebraic variety. It remains to show $\PABL(\Lsav) \subset \Charac$. Thanks to Proposition \ref{prop:covaug}, it is enough to prove the result for a finite étale covering of $\Lsav$. Hence, thanks to Proposition \ref{prop:lsav}, we can suppose that $\Lsav$ is a locally symmetric variety. Hence, thanks to Proposition \ref{prop:exvhs}, we can suppose that we have a PVHS $(\Locsys_{\R}, \Pol, \Filho^\bullet)$ over $\Lsav$ and a log compactification $(\Comp, \Div)$ of $\Lsav$ such that $\Locsys$ has unipotent monodromy and there is a locally split injective morphism of holomorphic complex vector bundles $T_{\Comp}(-\log \Div) \to \ext{\Filho}^{-1} / \ext{\Filho}^0$ such that over $\Lsav$, the pull-back of the Hodge metric is the metric induced by the canonical metric $\Came$. Hence Theorem \ref{thm:nef} gives us the fact that $\Stru_{\Lsav}(1)$ is nef. Hence, thanks to Theorem \ref{thm:cas}, it remains to show that $\Exc(\Taut) \subset \Charac \cup \Div$.
	
	Let $Z \subset \PrB$ be an irreducible closed subset of positive dimension $n$ such that $Z \setminus (\Charac \cup \Div)$ is not empty. We have to check that $\int_Z \chern(\Taut)^n > 0$. Let $Z^{\sing}$ be the closed subset of singular points of $Z$. Consider $f: Z' \to Z$ a birational proper map such that $Z'$ is smooth, $f^{-1} (Z^{\sing} \cup \Div')$ is a normal crossing divisor and $f$ induces an isomorphism over $Z \setminus (Z^{\sing} \cup \Div')$. Pulling back the PVHS $(\Locsys_{\R}, \Pol, \Filho^\bullet)$ to $Z'$, and using Theorem \ref{thm:chern}, we see that it remains to show that $\int_{Z'} \Chern(f^*\Taut, \came)^n > 0$. This results from the fact that $\Chern(\Taut, \came)$ is semipositive on $\QprB$ and positive outside of $\Charac_k$.
\end{proof}

\section{Families of curves}

In this section, we prove Theorems \ref{thm:gen}, \ref{thm:bro} and \ref{thm:mainapp}. We use the definitions of the introduction.

\subsection{The complex case}

Consider an integer $g \geq 2$. Let $\Qpv$ be a quasiprojective smooth complex algebraic variety with a projective log compactification $(\Prv, \Div)$. One can easily check that the ampleness of $\Omega_{\Prv}^1(\log \Div)$ with respect to $\Qpv$ in the sense of \cite{viehweg2001positivity} is equivalent to the emptiness of $\PABL(\Qpv)$, so we have the following proposition.

\begin{prop}\cite[Proposition 2.6]{viehweg2001positivity}
	Suppose there is a family of smooth projective curves of genus $g$ $f: C \to \Qpv$ such that the Kodaira-Spencer map $T_C \to R^1f_*T_{C/\Qpv}$ is a locally split injection. Then $\PABL(\Qpv)$ is empty.
\end{prop}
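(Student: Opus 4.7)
The statement is a direct consequence of \cite[Proposition 2.6]{viehweg2001positivity} combined with the equivalence pointed out just above: that $\Omega^1_\Prv(\log \Div)$ being ample with respect to $\Qpv$ in Viehweg's sense is equivalent to $\PABL(\Qpv) = \emptyset$. Granting this equivalence, the proof is immediate: the locally split injective Kodaira--Spencer map feeds into Viehweg's result to yield the ampleness of $\Omega^1_\Prv(\log \Div)$ with respect to $\Qpv$, which then yields the vanishing of $\PABL(\Qpv)$.

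The substantive content is therefore checking the equivalence. Viehweg's definition of ampleness with respect to $\Qpv$ amounts to the existence of an ample line bundle $H$ on $\Prv$ and an integer $\eta \geq 1$ such that $\Sym^\eta \Omega^1_\Prv(\log \Div) \otimes H^{-1}$ is generated by its global sections on $\Qpv$. Setting $\pi : \Proj \Omega^1_\Prv(\log \Div) \to \Prv$, this translates via the projection formula into the statement that $\Stru(\eta) \otimes \pi^* H^{-1}$ is basepoint-free on $\pi^{-1}(\Qpv)$.

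To connect this with $\PABL(\Qpv)$, I would fix an ample line bundle on $\Proj \Omega^1_\Prv(\log \Div)$ of the form $\Amp = \Stru(k) \otimes \pi^* H$ with $H$ ample on $\Prv$ and $k$ sufficiently large (such a bundle is ample for $k$ large enough, by relative ampleness of $\Stru(1)$ together with Nakai--Moishezon). Then $\Stru(m) \otimes \Amp^{-1} \cong \Stru(m-k) \otimes \pi^* H^{-1}$, so Viehweg's ampleness with respect to $\Qpv$ is exactly the assertion that, for some $m > k$ and some $j \geq 1$, the base locus of $(\Stru(m) \otimes \Amp^{-1})^{\otimes j}$ is disjoint from $\pi^{-1}(\Qpv)$; this in turn is equivalent, after taking intersections over large $m$, to $\AugBL(\Taut) \cap \pi^{-1}(\Qpv) = \PABL(\Qpv) = \emptyset$.

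The only technical subtlety is the choice of an ample bundle of the shape $\Stru(k) \otimes \pi^* H$ on the projective bundle, together with the interchange between basepoint freeness and stable base locus via sufficiently divisible powers. Once this is arranged, the equivalence unfolds by definition, and the heavy lifting is delegated to Viehweg's positivity theorem.
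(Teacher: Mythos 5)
Your proposal is correct and follows exactly the route the paper intends: the paper gives no proof beyond citing Viehweg's Proposition 2.6 and asserting that ampleness of $\Omega_{\Prv}^1(\log \Div)$ with respect to $\Qpv$ is equivalent to $\PABL(\Qpv)=\emptyset$, and your sketch fills in that ``easily checked'' equivalence in the standard way (choosing $\Amp = \Stru(k)\otimes\pi^*H$ ample on the projectivized bundle and comparing base loci). The only minor imprecision is that basepoint-freeness of $\Stru(\eta)\otimes\pi^*H^{-1}$ over $\pi^{-1}(\Qpv)$ is a priori weaker than global generation of $\Sym^\eta\Omega_{\Prv}^1(\log\Div)\otimes H^{-1}$ over $\Qpv$ rather than equivalent to it, but only the implication you actually use (Viehweg-ampleness implies emptiness of $\PABL(\Qpv)$) is needed for the proposition.
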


\begin{lem}\label{lem:vie}
	Let $M$ be a smooth complex algebraic variety with a finite étale covering map $M \to \Mg \otimes \C$. Then $\PABL(M)$ is empty. In particular $\PBL(M)$ is empty.
\end{lem}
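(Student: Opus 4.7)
The plan is to exhibit on $M$ an explicit family of smooth projective curves of genus $g$ whose Kodaira--Spencer map is a locally split injection, and then invoke the preceding proposition.

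First, I would produce the family. The stack $\Mg \otimes \C$ carries a universal family of smooth projective curves $\pi: \mathcal{C} \to \Mg \otimes \C$. Pulling $\pi$ back along the finite étale map $M \to \Mg \otimes \C$ yields a family of smooth projective curves $f: C \to M$ of genus $g$. Rigorously, one can first replace $\Mg \otimes \C$ by $\Mg(n) \otimes \C$ for some $n \geq 3$, which is a scheme, and factor $M \to \Mg \otimes \C$ through a finite étale map $M' \to \Mg(n) \otimes \C$ at the cost of an étale base change; this does not affect the conclusion since one can check $\PABL$ equals zero after an étale cover (Proposition \ref{prop:covaug}).

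Second, I would show that the Kodaira--Spencer map $T_M \to R^1 f_* T_{C/M}$ of this family is a locally split injection. This is essentially the defining property of the moduli of curves: the Kodaira--Spencer map of the universal family on $\Mg \otimes \C$ is an isomorphism, both sides being locally free of rank $3g-3$, since infinitesimal deformations of a smooth projective curve $X$ are classified by $H^1(X, T_X)$ and this classification is realized by the universal family. Since $M \to \Mg \otimes \C$ is étale, it induces an isomorphism of tangent sheaves, and pullback along a flat base change commutes with the formation of $R^1 f_* T_{C/M}$; hence the Kodaira--Spencer map on $M$ is again an isomorphism, and in particular a locally split injection.

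Third, applying the preceding proposition (Viehweg's result recalled just before the lemma) to the family $f: C \to M$ yields $\PABL(M) = \emptyset$. The last assertion then follows from the general inclusion $\PBL(M) \subset \PABL(M)$, which holds for every smooth complex algebraic variety, since the base locus of any tensor power $\Stru(m)$ is contained in the base locus of $\Stru(m) \otimes \Amp^{-1}$ for $\Amp$ ample. The only nontrivial step is the Kodaira--Spencer computation on a stack-theoretic pullback, but this is entirely standard once one reduces to the level-$n$ cover $\Mg(n) \otimes \C$, which is a smooth scheme carrying a genuine universal family.
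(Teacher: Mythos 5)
Your proposal is correct and follows essentially the same route as the paper: reduce via Proposition \ref{prop:covaug} to a level cover of $\Mg \otimes \C$ that is a quasiprojective scheme, pull back the universal family, and apply Viehweg's criterion (the proposition preceding the lemma), concluding with $\PBL(M) \subset \PABL(M)$. The only difference is that you spell out the Kodaira--Spencer verification, which the paper leaves implicit.
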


\begin{proof}
	According to Proposition \ref{prop:covaug}, the conclusion does not depend on $M$, hence we can chose the moduli space of connected smooth projective curves of genus $g$ with a $3$-level structure to suppose $M$ quasiprojective. Then the previous proposition with $C$ the pull-back of the universal family gives $\PABL(M) = \emptyset$, and we have $\PBL(M) \subset \PABL(M)$.
\end{proof}

\begin{proof}[Proof (of Theorem \ref{thm:gen})]
	Let $M$ be a quasiprojective smooth complex algebraic variety with a finite étale covering map $M \to \Mg(\C)$. A family of smooth projective curves of genus $g \geq 2$ over $\Lsav$ induces by base change a map $h: \Lsav' \to M$ where $\Lsav'$ is a finite étale covering of $\Lsav$. Hence $\Lsav'$ verifies the same hypothesis as $\Lsav$. Theorem \ref{thm:main} and the definition of $\Charac$ gives that $\PBL(\Lsav')$ is linearly non-degenerate. Moreover the previous lemma gives that $\PBL(M)$ is empty. Hence Proposition \ref{prop:cste} applied to $h$ (in characteristic $0$, the separability hypothesis is always verified) gives the conclusion.
\end{proof}

This theorem has been proved in the compact case in \cite[Corollary 1.6]{liu2017curvatures}. Their proof relies on the use of Mok's metric rigidity theorem (\cite[Theorem 1]{mok1987uniqueness}). There actually does not seem to be any obstruction to apply the generalized version of To \cite[Theorem 1]{to1989hermitian} of the metric rigidity theorem to keep the same proof for the non compact case.

\bigskip

Hence, the usefulness of Theorem \ref{thm:main} and Proposition \ref{prop:cste} over $\C$ would be revealed in cases where we consider morphisms $\Lsav \to \Qpv$ whose target has an empty stable (or augmented) base locus, but does not have an obvious metric of Griffiths seminegative curvature, like in Theorem \ref{thm:bro}.

\begin{proof}[Proof (of Theorem \ref{thm:bro})]
	The fact that, in this case, $\PABL(\Prv \setminus \Div)$ is empty is exactly \cite[Theorem A]{brotbek2018positivity}. Then we apply Proposition \ref{prop:cste} with Theorem \ref{thm:main}.
\end{proof}

\subsection{The general case}

Now we can prove Theorem \ref{thm:mainapp}. For $g, h \geq 2$, recall that $\Ag$ (resp. $\Mh$) is the moduli stack of principally polarized Abelian varieties of dimension $g$ (resp. connected smooth projective curves of genus $h$). There exist a non empty irreducible scheme $S$ with an étale morphism $S \to \Spec(\Z)$ (resp. $T \to \Spec(\Z)$), a scheme $\bar{A}$ (resp. $\bar{M}$) smooth and proper over $S$ (resp. $T$), the complement of a normal crossing divisor $A \subset \bar{A}$ (resp.  $M \subset \bar{M}$) and a finite étale covering map $A \to \Ag \times S$ (resp. $M \to \Mh \times T$). For example, we can take $S = \Spec(\Z[1/3, e^{2\im \pi/3}])$, $T = \Spec(\Z[1/3])$, $A$ the moduli space of principally polarized Abelian varieties of dimension $g$ with $3$-level structure and $M$ the moduli space of connected smooth projective curves of genus $h$ with a convenient non-Abelian level structure, see \cite{pikaart1995moduli}. Hence they satisfy the hypothesis of Proposition \ref{prop:spe}.

\begin{proof}[Proof (of Theorem \ref{thm:mainapp})]
	Thanks to Lemma \ref{lem:vie}, $\PBL(M \otimes \C) = \emptyset$, hence $\PmBL(M \otimes \C) = \emptyset$ for some $m \in \Ns$. Thanks to Propositions \ref{prop:ext} and \ref{prop:spe}, for all but finitely many prime numbers $p$, for every field $k$ of characteristic $0$ or $p$, $\PmBL(M \otimes k) = \emptyset$.
	
	As the universal covering of the smooth complex algebraic variety $A \otimes_{\Z[e^{2\im \pi/3}]} \C$ is biholomorphic to $g$-th Siegel upper half-space, Theorem \ref{thm:main} implies that $\PBL(A \otimes_{\Z[e^{2\im \pi/3}]} \C)$ is linearly non-degenerate. Thus $\BL_n^{\Proj}(A \otimes_{\Z[e^{2\im \pi/3}]} \C)$ is linearly non-degenerate for every $n \leq md$, with $d$ the degree of the finite étale covering map $M \to \Mh$. Thanks to Propositions \ref{prop:ext} and \ref{prop:spe}, for all but finitely many prime numbers $p$, for every field $k$ of characteristic $0$ or $p$, for every connected component $B$ of $A \otimes k$, $\BL_n^{\Proj}(B)$ is linearly non-degenerate for every $n \leq md$.
	
	Consider a separable family of smooth projective curves over $B \otimes k$ associated with a morphism $B \otimes k \to \Mh \otimes k$. It lifts as a morphism of $k$-varieties $f: C \to M \otimes k$ with $C$ a finite étale covering of $B \otimes k$ of degree at most $d$. By Proposition \ref{prop:covsbl}, $\PmBL(C)$ is linearly non-degenerate. We conclude with Proposition \ref{prop:cste}.
\end{proof}

\printbibliography

@article{abramovich1994subvarieties,
	title={Subvarieties of semiabelian varieties},
	author={Abramovich, Dan},
	journal={Compositio Mathematica},
	volume={90},
	number={1},
	pages={37--52},
	year={1994}
}

@book{ash2010smooth,
	title={Smooth compactifications of locally symmetric varieties},
	author={Ash, Avner and al.},
	year={2010},
	publisher={Cambridge University Press}
}

@article{baily1966compactification,
	title={Compactification of arithmetic quotients of bounded symmetric domains},
	author={Baily, Walter L and Borel, Armand},
	journal={Annals of mathematics},
	pages={442--528},
	year={1966},
	publisher={JSTOR}
}

@article{bakker2020tame,
	title={Tame topology of arithmetic quotients and algebraicity of Hodge loci},
	author={Bakker, Benjamin and Klingler, Bruno and Tsimerman, Jacob},
	journal={Journal of the American Mathematical Society},
	volume={33},
	number={4},
	pages={917--939},
	year={2020}
}

@article{borel1969introduction,
	title={Introduction aux groupes arithm{\'e}tiques. Publications de l’Institut de Math{\'e}matique de l’Universit{\'e} de Strasbourg, XV},
	author={Borel, Armand},
	journal={Actualit{\'e}s scientifiques et industrielles},
	volume={1341},
	year={1969}
}

@article{boucksom2014augmented,
	title={Augmented base loci and restricted volumes on normal varieties},
	author={Boucksom, S{\'e}bastien and Cacciola, Salvatore and Lopez, Angelo Felice},
	journal={Mathematische Zeitschrift},
	volume={278},
	number={3},
	pages={979--985},
	year={2014},
	publisher={Springer}
}

@inproceedings{brotbek2018positivity,
	title={On the positivity of the logarithmic cotangent bundle},
	author={Brotbek, Damian and Deng, Ya},
	booktitle={Annales de l'Institut Fourier},
	volume={68},
	number={7},
	pages={3001--3051},
	year={2018}
}

@article{brunebarbe2018symmetric,
	title={Symmetric differentials and variations of Hodge structures},
	author={Brunebarbe, Yohan},
	journal={Journal f{\"u}r die reine und angewandte Mathematik (Crelles Journal)},
	volume={2018},
	number={743},
	pages={133--161},
	year={2018},
	publisher={De Gruyter}
}

@article{cattani1986degeneration,
	title={Degeneration of Hodge structures},
	author={Cattani, Eduardo and Kaplan, Aroldo and Schmid, Wilfried},
	journal={Annals of Mathematics},
	volume={123},
	number={3},
	pages={457--535},
	year={1986},
	publisher={JSTOR}
}

@book{deligne2006equations,
	title={{\'E}quations diff{\'e}rentielles {\`a} points singuliers r{\'e}guliers},
	author={Deligne, Pierre},
	volume={163},
	year={2006},
	publisher={Springer}
}

@incollection{deligne2006travaux,
title={Travaux de Griffiths},
author={Deligne, Pierre},
booktitle={S{\'e}minaire Bourbaki vol. 1969/70 Expos{\'e}s 364--381},
pages={213--237},
year={2006},
publisher={Springer}
}

@inproceedings{ein2006asymptotic,
	title={Asymptotic invariants of base loci},
	author={Ein, Lawrence and Lazarsfeld, Robert and Musta{\c{t}}{\u{a}}, Mircea and Nakamaye, Michael and Popa, Mihnea},
	booktitle={Annales de l'institut Fourier},
	volume={56},
	number={6},
	pages={1701--1734},
	year={2006}
}

@book{helgason2001differential,
title={Differential geometry and symmetric spaces},
author={Helgason, Sigurdur},
volume={341},
year={2001},
publisher={American Mathematical Soc.}
}

@article{klingler2016hyperbolic,
	title={The hyperbolic Ax-Lindemann-Weierstrass conjecture},
	author={Klingler, Bruno and Ullmo, Emmanuel and Yafaev, Andrei},
	journal={Publications math{\'e}matiques de l'IH{\'E}S},
	volume={123},
	pages={333--360},
	year={2016}
}

@incollection{kollar1987subadditivity,
  title={Subadditivity of the Kodaira dimension: fibers of general type},
  author={Koll{\'a}r, J{\'a}nos},
  booktitle={Algebraic geometry, Sendai, 1985},
  volume={10},
  pages={361--399},
  year={1987},
  publisher={Mathematical Society of Japan}
}

@book{lan2008arithmetic,
	title={Arithmetic compactifications of PEL-type Shimura varieties},
	author={Lan, Kai-Wen},
	year={2008},
	publisher={Harvard University}
}

@article{liu2017curvatures,
	title={Curvatures of moduli space of curves and applications},
	author={Liu, Kefeng and Sun, Xiaofeng and Yang, Xiaokui and Yau, Shing-Tung},
	journal={Asian Journal of Mathematics},
	volume={21},
	number={5},
	pages={841--854},
	year={2017},
	publisher={International Press of Boston}
}

@book{margulis1991discrete,
	title={Discrete subgroups of semisimple Lie groups},
	author={Margulis, Gregori A},
	volume={17},
	year={1991},
	publisher={Springer Science \& Business Media}
}

@article{mok1987uniqueness,
	title={Uniqueness theorems of Hermitian metrics of seminegative curvature on quotients of bounded symmetric domains},
	author={Mok, Ngaiming},
	journal={Annals of Mathematics},
	volume={125},
	number={1},
	pages={105--152},
	year={1987},
	publisher={JSTOR}
}

@book{mok1989metric,
	title={Metric rigidity theorems on Hermitian locally symmetric manifolds},
	author={Mok, Ngaiming},
	volume={6},
	year={1989},
	publisher={World Scientific}
}

@article{mok2002characterization,
	title={Characterization of certain holomorphic geodesic cycles on quotients of bounded symmetric domains in terms of tangent subspaces},
	author={Mok, Ngaiming},
	journal={Compositio Mathematica},
	volume={132},
	number={3},
	pages={289--309},
	year={2002},
	publisher={London Mathematical Society}
}

@incollection{mok2011projective,
	title={Projective algebraicity of minimal compactifications of complex-hyperbolic space forms of finite volume},
	author={Mok, Ngaiming},
	booktitle={Perspectives in Analysis, Geometry, and Topology: On the Occasion of the 60th Birthday of Oleg Viro},
	pages={331--354},
	year={2011},
	publisher={Springer}
}

@article{mumford1977hirzebruch,
	title={Hirzebruch's proportionality theorem in the non-compact case},
	author={Mumford, David Bryant},
	journal={Inventiones mathematicae},
	year={1977},
	publisher={Springer Verlag}
}

@article{nakamaye2000stable,
	title={Stable base loci of linear series},
	author={Nakamaye, Michael},
	journal={Mathematische Annalen},
	volume={318},
	pages={837--847},
	year={2000},
	publisher={Springer-Verlag}
}

@article {peterzil2009complex,
	AUTHOR = {Peterzil, Ya'acov and Starchenko, Sergei},
	TITLE = {Complex analytic geometry and analytic-geometric categories},
	JOURNAL = {J. Reine Angew. Math.},
	FJOURNAL = {Journal f\"ur die Reine und Angewandte Mathematik. [Crelle's
	Journal]},
	VOLUME = {626},
	YEAR = {2009},
	PAGES = {39--74},
	MRCLASS = {32C15 (03C64 32B20)},
	MRNUMBER = {2492989},
	MRREVIEWER = {Fernando\ Sanz},
}

@phdthesis{i2007generalisations,
	title={G{\'e}n{\'e}ralisations de la th{\'e}orie de l'intersection arithm{\'e}tique},
	author={Freixas i Montplet, G{\'e}rard},
	year={2007},
	school={Paris 11}
}

@incollection{pikaart1995moduli,
	title={Moduli of curves with non-abelian level structure},
	author={Pikaart, Martin and de Jong, A Johan},
	booktitle={The moduli space of curves},
	pages={483--509},
	year={1995},
	publisher={Springer}
}

@article{schmid1973variation,
	title={Variation of Hodge structure: the singularities of the period mapping},
	author={Schmid, Wilfried},
	journal={Inventiones mathematicae},
	volume={22},
	number={3},
	pages={211--319},
	year={1973},
	publisher={Springer-Verlag Berlin/Heidelberg}
}

@article{sheng2010polarized,
	title={Polarized variation of Hodge structures of Calabi--Yau type and characteristic subvarieties over bounded symmetric domains},
	author={Sheng, Mao and Zuo, Kang},
	journal={Mathematische Annalen},
	volume={348},
	number={1},
	pages={211--236},
	year={2010},
	publisher={Springer}
}

@misc{stacks-project,
author       = {The {Stacks Project Authors}},
title        = {\textit{Stacks Project}},
howpublished = {\url{https://stacks.math.columbia.edu}},
year         = {2018},
}

@article{temkin2018functorial,
title={Functorial desingularization over {$\mathbb{Q}$}: boundaries and the embedded case},
author={Temkin, Michael},
journal={Israel Journal of Mathematics},
volume={224},
pages={455--504},
year={2018},
publisher={Springer}
}

@article{to1989hermitian,
	title={Hermitian metrics of semi-negative curvature on quotients of bounded symmetric domains},
	author={To, Wing-Keung},
	journal={Inventiones mathematicae},
	volume={95},
	number={3},
	pages={559--578},
	year={1989},
	publisher={Springer}
}

@article{van1994real,
	title={On the real exponential field with restricted analytic functions},
	author={van den Dries, Lou and Miller, Chris},
	journal={Israel Journal of Mathematics},
	volume={85},
	pages={19--56},
	year={1994},
	publisher={Springer}
}

@book{van1998tame,
	title={Tame topology and o-minimal structures},
	author={Van den Dries, Lou},
	volume={248},
	year={1998},
	publisher={Cambridge university press}
}

@article{viehweg2001positivity,
	title={Positivity of direct image sheaves and applications to families of higher dimensional manifolds},
	author={Viehweg, Eckart},
	year={2001}
}

@article{zucker1981locally,
	title={Locally homogeneous variations of Hodge structure},
	author={Zucker, Steven},
	journal={Enseign. Math.(2)},
	volume={27},
	number={3-4},
	pages={243--276},
	year={1981}
}

\end{document}